\documentclass[leqno]{amsart}
\usepackage{amssymb}
\usepackage{mathrsfs}
\usepackage{amsmath, amsfonts, vmargin, enumerate}
\usepackage{graphicx}
\usepackage{color}
\usepackage{verbatim}
\usepackage{amsthm}
\usepackage{latexsym, bm}
\usepackage[latin1]{inputenc}
\usepackage[T1]{fontenc}
\usepackage[backref]{hyperref}

\usepackage{euscript}
\usepackage{dsfont}

\input xypic

\xyoption {all}

 \makeatletter





\newtheorem{thm}{Theorem}[section]
\newtheorem{prop}[thm]{Proposition}
\newtheorem{cor}[thm]{Corollary}
\newtheorem{lem}[thm]{Lemma}
\newtheorem{defi}[thm]{Definition}
\newtheorem{remark}[thm]{Remark}
\newtheorem{example}[thm]{Example}
\newtheorem{pb}[thm]{Problem}
\newtheorem{conj}[thm]{Conjecture}

\newenvironment{rk}{\begin{remark}\rm}{\end{remark}}
\newenvironment{definition}{\begin{defi}\rm}{\end{defi}}
\newenvironment{ex}{\begin{example}\rm}{\end{example}}
\newenvironment{problem}{\begin{pb}\rm}{\end{pb}}
\newenvironment{conjecture}{\begin{conj}\rm}{\end{conj}}

\numberwithin{equation}{section}

\newcommand{\real}{{\mathbb R}}
\newcommand{\nat}{{\mathbb N}}
\newcommand{\ent}{{\mathbb Z}}
\newcommand{\com}{{\mathbb C}}
\newcommand{\un}{{\mathds {1}}}

\newcommand{\T}{{\mathbb T}}

\newcommand{\A}{{\mathcal A}}

\newcommand{\C}{{\mathcal C}}
\newcommand{\D}{{\mathcal D}}
\newcommand{\E}{{\mathbb E}}
\newcommand{\F}{{\mathcal F}}
\newcommand{\G}{{\mathcal G}}
\renewcommand{\H}{{\mathcal H}}

\renewcommand{\i}{{\rm i}}

\renewcommand{\a}{\alpha}
\renewcommand{\b}{\beta}
\newcommand{\g}{\gamma}
\newcommand{\Ga}{\Gamma}

\renewcommand{\d}{\delta}
\renewcommand{\t}{\theta}
\newcommand{\e}{\varepsilon}
\newcommand{\f}{\varphi}
\newcommand{\p}{\psi}

\renewcommand{\l}{\lambda}
\renewcommand{\O}{\Omega}
\renewcommand{\o}{\omega}
\newcommand{\s}{\sigma}
\newcommand{\Si}{\Sigma}

\newcommand{\ot}{\otimes}

\newcommand{\8}{\infty}
\newcommand{\el}{\ell}

\newcommand{\la}{\langle}
\newcommand{\ra}{\rangle}
\newcommand{\wt}{\widetilde}
\newcommand{\wh}{\widehat}
\newcommand{\n}{\noindent}

\newcommand{\les}{\lesssim}
\newcommand{\ges}{\gtrsim}
\newcommand{\cc}{\mathsf{c}}
\renewcommand{\tt}{\mathsf{t}}

\newcommand{\be}{\begin{align*}}
\newcommand{\ee}{\end{align*}}
\newcommand{\beq}{\begin{equation}}
\newcommand{\eeq}{\end{equation}}
\newcommand{\beqn}{\begin{equation*}}
\newcommand{\eeqn}{\end{equation*}}

\begin{document}

\title[Vector-valued Littlewood-Paley-Stein theory]{Holomorphic functional calculus and vector-valued Littlewood-Paley-Stein theory for semigroups}

\thanks{{\it 2000 Mathematics Subject Classification:} Primary: 46B20, 42B25. Secondary: 47D03, 47D06.}
\thanks{{\it Key words:} Littlewood-Paley-Stein inequalities,  analytic semigroups of regular contractions, martingale type and cotype, Luzin type and cotype, holomorphic functional calculus, $\ell_q$-boundedness}

\author[Quanhua  Xu]{Quanhua Xu}
\address{Institute for Advanced Study in Mathematics, Harbin Institute of Technology,  Harbin 150001, China; and
Laboratoire de Math{\'e}matiques, Universit{\'e} de Franche-Comt{\'e}, 25030 Besan\c{c}on Cedex, France}
\email{qxu@univ-fcomte.fr}

\date{}
\maketitle

\centerline{\it Dedicated to the memory of Elias M. Stein}

 \begin{abstract}
 We study vector-valued Littlewood-Paley-Stein theory for semigroups of regular contractions $\{T_t\}_{t>0}$ on $L_p(\Omega)$ for a \textit{fixed} $1<p<\infty$. We prove that if a Banach space $X$ is of martingale cotype $q$, then there is a constant $C$ such that
   $$
  \left\|\left(\int_0^\infty\big\|t\frac{\partial}{\partial t}P_t (f)\big\|_X^q\,\frac{dt}t\right)^{\frac1q}\right\|_{L_p(\Omega)}\le C\, \big\|f\big\|_{L_p(\Omega; X)}\,,
  \quad\forall\, f\in L_p(\Omega; X),$$
where $\{P_t\}_{t>0}$  is the Poisson semigroup subordinated to $\{T_t\}_{t>0}$. Let $\mathsf{L}^P_{\cc, q, p}(X)$ be the least constant $C$, and let  $\mathsf{M}_{\cc, q}(X)$ be the martingale cotype $q$ constant of $X$. We show
 $$\mathsf{L}^{P}_{\cc,q, p}(X)\lesssim \max\big(p^{\frac1{q}},\, p'\big) \mathsf{M}_{\cc,q}(X).$$
Moreover, the order $\max\big(p^{\frac1{q}},\, p'\big)$ is optimal as $p\to1$ and $p\to\infty$. If $X$ is of martingale type $q$, the reverse inequality holds. If additionally $\{T_t\}_{t>0}$ is analytic on $L_p(\Omega; X)$, the semigroup $\{P_t\}_{t>0}$ in these results can be replaced by $\{T_t\}_{t>0}$ itself.

Our new approach is built on holomorphic functional calculus.  Compared with all the previous  approaches, ours is more powerful in several aspects: a) it permits us to go much further beyond the setting of symmetric submarkovian semigroups; b) it yields the optimal orders of growth on $p$ for most of the relevant constants;  c) it gives new insights into the scalar case for which our orders of the best constants in the classical Littlewood-Paley-Stein inequalities for symmetric submarkovian semigroups are better than the previous by Stein.

In particular, we resolve a problem of Naor and Young on the optimal order of the best constant in the above inequality when $X$ is of martingale cotype $q$ and $\{P_t\}_{t>0}$ is the classical Poisson and heat semigroups  on $\mathbb{R}^d$.
 \end{abstract}

\bigskip


\section{Introduction}


This article pursues our investigation on the vector-valued Littlewood-Paley-Stein theory that was initiated in \cite{LP0} and further carried out in \cite{LP1, LP3, LP2}. Our research in this domain has been profoundly influenced by Stein's monograph \cite{stein} and developed in two parallel directions. On the one hand, it deals with the Banach space valued case as in the just quoted articles as well as in the present one; and on the other hand, it extends Littlewood-Paley-Stein theory to the noncommutative setting (see \cite{JX, LMX} for maximal function inequalities and \cite{JLMX} for square function inequalities).

Note  that Betancor and coauthors studied this theory for some special semigroups (cf. \cite{betancor1, betancor1b, betancor2, betancor3}); see also \cite{AFST, betancor-1, betancor0, HTV, Hy, OX,TZ} for related results. Recently,  the theory has found applications to Lipschitz embedding of metric spaces into Banach spaces, and to approximation of Lipschitz maps by linear maps, see, for instance,  the papers by Hyt\"onen and Naor  \cite{HN},  Lafforgue and Naor \cite{LaNa}, Naor and Young  \cite{NaYo}.

\medskip

First, we recall the famous Littlewood-Paley-Stein inequality that is the starting point of all our research in the domain. Let $(\O, \A, \mu)$ be  a $\s$-finite measure space and $\{T_t\}_{t>0}$  a symmetric diffusion semigroup on $(\O,\A, \mu)$ in Stein's sense \cite[section~III.1]{stein}. Namely, $\{T_t\}_{t>0}$ satisfies the following conditions
 \begin{itemize}
 \item[$\bullet$] $T_t$ is a contraction on $L_p(\O)$ for every $1\le p\le \8$,
  \item[$\bullet$] $T_tT_s=T_{t+s}$,
 \item[$\bullet$] $\lim_{t\to 0}T_t(f)=f$ in $L_2(\O)$ for every $f\in L_2(\O)$,
 \item[$\bullet$] $T_t$ is positive (i.e. positivity preserving),
 \item[$\bullet$] $T_t$ is  selfadjoint on $L_2(\O)$,
 \item[$\bullet$] $T_t(1)=1$.
\end{itemize}
The last condition is the markovianity; the next to last is the symmetry. Thus such a semigroup is also called a {\it symmetric markovian semigroup}. A semigroup satisfying all the above conditions except markovianity is usually called a {\it symmetric submarkovian semigroup} (the submarkovianity means $T_t(1)\le1$).

It is a classical fact that the orthogonal projection $\mathsf F$ from $L_2(\O)$ onto the  fixed point subspace of $\{T_t\}_{t> 0}$ extends to a contractive projection on $L_p(\Omega)$ for
every $1\le p\le\infty$. Then $\mathsf F$ is also positive and   $\mathsf F\big(L_p(\Omega)\big)$ is the fixed point subspace of $\{T_t\}_{t> 0}$ on $L_p(\Omega)$.

\medskip

Stein's celebrated extension of the classical Littlewood-Paley inequality asserts that for every symmetric diffusion semigroup $\{T_t\}_{t>0}$ and every $1<p<\8$
 \beq\label{LPS}
 \|f-\mathsf F(f)\|_{L_p(\O)}\approx_p \left\|\left(\int_0^\8\Big |t\frac{\partial}{\partial t} T_t (f)\Big|^2\,\frac{dt}t\right)^{\frac12}\right\|_{L_p(\O)}\,,\quad f\in L_p(\O).
 \eeq
The classical inequality corresponds to the case where $\{T_t\}_{t> 0}$ is the Poisson semigroup on the torus $\T$ or the Euclidean space $\real^d$. Stein's inequality above is the core of  \cite{stein} in which Stein developed a beautiful general  theory. Later, Cowling \cite{Cow} presented an elegant alternative approach to Stein's theory for symmetric submarkovian semigroups; Cowling's goal is to show that the negative generator of $\{T_t\}_{t> 0}$ has a bounded holomorphic functional calculus, then to deduce the maximal inequality on $\{T_t\}_{t> 0}$ which is another fundamental result of Stein.

\medskip

In the present article we are concerned  with the vector-valued case. Given a Banach space $X$ let $L_p(\O;X)$ denote the $L_p$-space of strongly measurable functions from $\O$ to $X$. It is a well known elementary fact that if $T$ is a positive bounded operator on $L_p(\O)$ with $1\le p\le\8$, then $T\ot{\rm Id}_X$ is bounded on $L_p(\O; X)$ with the same norm. For notational convenience, throughout this article, we will denote $T\ot{\rm Id}_X$ by $T$ too. Thus $\{T_t\}_{t>0}$ is also a semigroup of contractions on $L_p(\O; X)$ for any Banach space $X$ with  $\mathsf F\big(L_p(\Omega; X)\big)$ as its fixed point subspace.

The vector-valued Littlewood-Paley-Stein theory consists in investigating \eqref{LPS} for $f\in L_p(\O; X)$ (with the absolute value on the right hand side replaced by the norm of $X$). It is
not hard to show that the equivalence \eqref{LPS}  continues to hold in the $X$-valued setting for the Poisson semigroup on $\T$ iff $X$ is isomorphic to a Hilbert space (cf. \cite{HY, LP0}). However, if one requires only the validity of one of the two one-sided inequalities, the corresponding family of Banach spaces is much larger: the upper estimate corresponds to 2-uniformly smooth  spaces while the lower one to  2-uniformly convex spaces (up to a renorming).

These geometrical properties of Banach spaces can be characterized by martingale inequalities. Recall that a Banach space $X$ is of \emph{martingale cotype} $q$ (with $2\le q<\8$) if there exists a positive constant $c$ such that every finite $X$-valued $L_q$-martingale $(f_n)$ satisfies the following inequality
 $$\sum_n\mathbb{E}\big\|f_n-f_{n-1}\big\|_X^q\le c^q\sup_n\mathbb{E}\big\|f_n\big\|_X^q\,,$$
where $\mathbb{E}$ denotes the underlying expectation.  $X$ is of  \emph{martingale type} $q$ (with $1<q\le2$) if the reverse inequality holds (with $c^{-1}$ in place of $c$). The corresponding best constant will be denoted by $\mathsf{M}_{\cc,q}(X)$ for the martingale cotype $q$ and by $\mathsf{M}_{\tt,q}(X)$ for the martingale type $q$. Pisier's famous renorming theorem asserts that  $X$ is of martingale type (resp. cotype) $q$ iff $X$ admits an equivalent norm that is $q$-uniformly smooth (resp. convex). We refer the reader to \cite{pis1,pis2b, pis3} for more information.

\medskip

Note that in the study of one-sided inequalities in the vector-valued case, the index $2$ on the right hand side of  \eqref{LPS} plays no special role and can be replaced by $1<q<\8$, $q\le 2$ for the upper estimate and $q\ge 2$ for the lower. Now we can summarize the main results of \cite{LP0,LP1,LP2} as follows.

\medskip\n{\bf Theorem~A}.  \emph{Let $X$ be a Banach space and $1<q<\8$.
\begin{enumerate}[\rm(i)]
 \item  $X$ is of martingale cotype $q$ iff for every  symmetric diffusion semigroup $\{T_t\}_{t>0}$ and for every $1<p<\8$ $($equivalently, for some $1<p<\8)$ there exists a constant $c$ such that
  $$
  \left\|\left(\int_0^\8\big\|t\frac{\partial}{\partial t}T_t (f)\big\|_X^q\,\frac{dt}t\right)^{\frac1q}\right\|_{L_p(\O)}\le c\, \big\|f\big\|_{L_p(\O; X)}\,,
  \quad  f\in L_p(\O; X).
  $$
 \item  $X$ is of martingale type $q$ iff  for every  symmetric diffusion semigroup $\{T_t\}_{t>0}$  and for every $1<p<\8$ $($equivalently, for some $1<p<\8)$  there exists a constant $c$ such that
   $$
   \big\|f-\mathsf F (f)\big\|_{L_p(\O; X)}\le c \left\|\left(\int_0^\8\big\|t\frac{\partial}{\partial t} T_t (f)\big\|_X^q\,\frac{dt}t\right)^{\frac1q}\right\|_{L_p(\O)}\,,
   \quad f\in L_p(\O; X).
   $$
  \end{enumerate}}

Note that $\mathsf F (f)$ does not contribute to the norm on the left hand side of the inequality in (i) above since $\frac{\partial}{\partial t}T_t (\mathsf F (f))=0$ for any $t>0$; so if this inequality holds, it automatically holds with $f$ replaced by $f-\mathsf F (f)$ on the right hand side. In the sequel, when cotype inequalities will be considered, we will often use simply $f$ instead $f-\mathsf F (f)$ as in (i). However, for the type inequalities as in (ii), we must use $f-\mathsf F (f)$ on the left hand side.

\medskip

Both ``if'' parts in the above theorem are proved in \cite{LP0}; for that purpose we need only the case where $\{T_t\}_{t>0}$ is the usual Poisson semigroup on $\T$ (or $\real^d$ as in \cite{LP1}). This is the easy direction thanks to the classical link between Poisson integral and Brownian motion. The other direction is harder. It is first proved  in \cite{LP0} for the Poisson semigroup on the unit circle, then in \cite{LP1} for the Poisson semigroup subordinated to any symmetric diffusion semigroup $\{T_t\}_{t>0}$. Left as an open problem in \cite{LP1}, the statement for $\{T_t\}_{t>0}$ itself as above was finally settled in \cite{LP2}. Note that like in  \cite{stein}, the key tool in \cite{LP1,LP2} is Rota's martingale dilation of a  symmetric diffusion semigroup that allows us to adapt the scalar Littlewood-Paley-Stein theory developed in \cite{stein}.

\medskip

The use of Rota's dilation prevented us from weakening the assumption on a symmetric diffusion semigroup. Cowling's approach in \cite{Cow} does not use Rota's dilation but it requires the semigroup in consideration to be symmetric and submarkovian. It has been an open problem of establishing the results of \cite{LP1} or \cite{LP2}  in Cowling's setting. In fact, since a long time it has been a desire to extend all previous results to more general semigroups. This was done in some special cases (Hermite, Laguerre and Bessel semigroups) by Betancor and  coauthors (cf. \cite{betancor1, betancor1b, betancor2, betancor3}).

\medskip

The objective of the present article is to resolve the above problems. We will develop a vector-valued  Littlewood-Paley-Stein theory for semigroups of regular operators on $L_p(\O)$ for a \textit{single} $1<p<\8$, thereby going considerably beyond Stein-Cowling's setting.

Recall that an operator $T$ on $L_p(\O)$ ($1\le p\le\8$)  is {\it regular} if there exists a constant $c$ such that
 $$\big\|\sup_k|T(f_k)|\big\|_p\le c\, \big\|\sup_k|f_k|\big\|_p$$
for all finite sequences $\{f_k\}_{k\ge1}$ in $L_p(\O)$. The least constant $c$ is called the {\it regular norm} of $T$. Obviously, any positive operator is regular with regular norm equal to its operator norm. It is well known that, conversely, if $T$ is regular, then there exists a positive operator $S$ on  $L_p(\O)$  such that $|T(f)|\le S(|f|)$
for any $f\in L_p(\O)$ with  $\|S\|$ equal to the regular norm of $T$; such a positive $S$ is unique and called the absolute value of $T$ and denoted by $|T|$ (see \cite[Chapter~1]{MN}). For  presentation simplicity, in this article we will only consider {\it contractively regular} operators, i.e., those with regular norms less than or equal to $1$, and will simply call these operators as regular operators with a light abuse of terminology.

It is well known (and easy to check) that if $T$ is a contraction on $L_p(\O)$ for every $1\le p\le\8$, then $T$ is regular on $L_p(\O)$. Like positive operators, a regular operator $T$  extends to a contraction on $L_p(\O;X)$ for any Banach space $X$. This extension will be denoted by $T$ too.

\smallskip

Now let $\{T_t\}_{t>0}$ be a strongly continuous semigroup of regular operators on $L_p(\O)$ with $1<p<\8$. Extended to $L_p(\O;X)$, $\{T_t\}_{t>0}$ remains to be a strongly continuous semigroup of contractions on $L_p(\O;X)$.
Let again $\mathsf F$ be the projection from $L_p(\O)$ onto the  fixed point subspace of
$\{T_t\}_{t> 0}$. Then $\mathsf F$ is also regular, so extends to a contractive projection on $L_p(\O;X)$. Note that $\mathsf F(L_p(\O;X))$ coincides with the  fixed point subspace of $\{T_t\}_{t> 0}$ on $L_p(\O;X)$.

Let $\{P_t\}_{t>0}$ be the Poisson semigroup subordinated to $\{T_t\}_{t>0}$:
 \beq\label{subordination}
 P_t(f)=\frac{1}{\sqrt\pi}\,\int_0^\infty \frac{e^{-s}}{\sqrt s}\,T_{\frac{t^2}{4s}}(f)ds.
 \eeq
Recall that if $A$ denotes the negative infinitesimal generator of $\{T_t\}_{t>0}$ (i.e., $T_t=e^{-tA}$), then $P_t=e^{-t\sqrt A}$.
Instead of the square root, one can, of course, consider other subordinated semigroups $e^{-tA^\a}$ for $0<\a<1$; but we will not deal with the latter here.

\medskip

To proceed further, we need to introduce some notions. Define
 $$\mathcal{G}^T_q(f)=\Big(\int_0^\8\big\|t\frac{\partial}{\partial t}T_t(f)\big\|^q_X\frac{dt}t\Big)^{\frac1q}$$
for $f$ in the definition domain of  $A$ in $L_p(\O;X)$.  $X$ is said to be of {\it Luzin cotype $q$ relative to} $\{T_t\}_{t> 0}$ if there exists a constant $c$ such that
 $$
 \big\|\mathcal{G}^T_q(f)\big\|_{L_p(\O)}\le c\big\|f-\mathsf{F}(f)\big\|_{L_p(\O;X)}$$
for all $f$ as above.
The smallest $c$ is denoted by $\mathsf{L}^T_{\cc, q,p}(X)$.  Similarly, we define the  {\it Luzin type $q$} of $X$ by reversing the above inequality and changing $c$ to $c^{-1}$, the corresponding type $q$ constant is denoted by $\mathsf{L}^T_{\tt, q,p}(X)$. See section~\ref{Luzin type and cotype} below for more information.

\begin{rk}\label{Poisson vs Heat}
 The subordination formula \eqref{subordination} immediately implies the pointwise inequality $\mathcal{G}^P_q(f)\le C\,\mathcal{G}^T_q(f)$ for any $f$, where $C$ is an absolute positive constant. It then follows that
  $$\mathsf{L}^T_{\cc, q,p}(X)\ge C\, \mathsf{L}^P_{\cc, q,p}(X)\;\text{ and }\; \mathsf{L}^T_{\tt, q,p}(X)\le C\, \mathsf{L}^P_{\tt, q,p}(X).$$
 \end{rk}

In \cite{LP0}, the Luzin type and cotype relative to the Poisson semigroup on the unit circle are shown to be equivalent to the martingale type and cotype, respectively.  Theorem~A above extends this to symmetric diffusion semigroups.

In the sequel, we will use the following convention:  $A\lesssim B$ (resp. $A\lesssim_\e B$) means that $A\le C B$ (resp. $A\le C_\e B$) for some absolute positive constant $C$ (resp. a positive constant $C_\e$ depending only on $\e$).  $A\approx B$ or  $A\approx_\e B$ means that these inequalities as well as their inverses hold. The index $p$ will be assumed to satisfy $1<p<\8$ and  $p'$ will denote its conjugate index.

\medskip

Below is our first principal result.

\begin{thm}\label{Poisson ML}
 Let $X$ be a Banach space and $1<p, q<\8$. Let $\{T_t\}_{t>0}$ be a strongly continuous semigroup of regular operators on $L_p(\O)$ and $\{P_t\}_{t>0}$ its subordinated Poisson semigroup.
 \begin{enumerate}[\rm(i)]
 \item  If $X$ is of martingale cotype $q$, then $X$ is  of  Luzin cotype $q$ relative to $\{P_t\}_{t> 0}$ and
  $$\mathsf{L}^{P}_{\cc,q, p}(X)\les \max\big(p^{\frac1q},\, p'\big) \mathsf{M}_{\cc,q}(X).$$
 \item  If $X$ is of martingale type $q$, then $X$ is  of  Luzin type $q$ relative to $\{P_t\}_{t> 0}$ and
  $$\mathsf{L}^{P}_{\tt,q, p}(X)\les \max\big(p,\, p'^{\frac1{q'}}\big) \mathsf{M}_{\tt,q}(X).$$
   \end{enumerate}
  \end{thm}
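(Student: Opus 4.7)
The plan is to establish (i) first and to deduce (ii) by duality. The starting observation is that the subordinated Poisson semigroup $\{P_t\}_{t>0}$ is a bounded analytic semigroup of regular contractions on $L_p(\O)$ for every $1<p<\8$: subordination against the positive kernel in \eqref{subordination} preserves regularity, and $z\mapsto e^{-t\sqrt z}$ is holomorphic on the right half-plane. The negative generator $B=\sqrt A$ is thus sectorial of angle strictly less than $\pi/2$, and Fendler's dilation theorem lifts $\{P_t\}_{t>0}$ to a $C_0$-group of positive isometries on an enlarged $L_p$-space, from which a bounded $H^\infty$ calculus for $B$ on $L_p(\O)$, on any sector of angle $>\pi/2$, is obtained by Coifman--Weiss-type transference. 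Tensoring with $\mathrm{Id}_X$ preserves the norms of the resulting functional-calculus operators, since they are built out of regular resolvents.

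The core of the argument is then a vector-valued quadratic estimate. With $\varphi(z)=ze^{-z}$, one has $t\partial_t P_tf=-\varphi(tB)f$, so that $\G^P_q(f)=\bigl(\int_0^\8\|\varphi(tB)f\|_X^q\,dt/t\bigr)^{1/q}$. The target estimate
\begin{equation*}
\Big\|\Big(\int_0^\8\|\varphi(tB)f\|_X^q\,\tfrac{dt}{t}\Big)^{1/q}\Big\|_{L_p(\O)}\les \max\bigl(p^{1/q},p'\bigr)\,\mathsf M_{c,q}(X)\,\|f\|_{L_p(\O;X)}
\end{equation*}
would follow from a Calder\'on reproducing formula $f=c\int_0^\8\psi(tB)\varphi(tB)f\,dt/t$ for an auxiliary $\psi\in H^\infty(\Si_\theta)$, split dyadically as $f=\sum_{n\in\ent}f_n$ with $f_n=\int_{2^n}^{2^{n+1}}\psi(tB)\varphi(tB)f\,dt/t$. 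After passage to the Fendler dilation, the sequence $(f_n)_{n\in\ent}$ essentially behaves like a martingale difference, so that the martingale cotype $q$ of $X$ controls $\bigl(\sum_n\|f_n\|_X^q\bigr)^{1/q}$ by $\mathsf M_{c,q}(X)\|f\|_{L_p(\O;X)}$ in $L_p(\O)$; a continuous-versus-discrete comparison then restores the integral form of $\G^P_q(f)$.

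The main obstacle is the sharp tracking of the $p$-dependence. A soft combination of $H^\infty$ calculus, transference and martingale cotype only yields an unspecified $C(p)\,\mathsf M_{c,q}(X)$, whereas the claimed order $\max(p^{1/q},p')$ requires two refined ingredients: a Stein-type device exploiting the submarkovian structure of the absolute-value semigroup $\{|P_t|\}$ to produce the $p'$ blow-up as $p\to 1$, and a $q$-th power interpolation trick at the dyadic level accounting for the $p^{1/q}$ growth as $p\to\8$. The target inequality is then reached by interpolating between the $L_2$ quadratic estimate (a dimensionless consequence of the $H^\infty$ calculus) and these two endpoint estimates. Finally, (ii) is obtained from (i) by duality: regularity is preserved under adjoints on $L_{p'}$, $X$ is of martingale type $q$ iff $X^*$ is of martingale cotype $q'$ with $\mathsf M_{c,q'}(X^*)\le \mathsf M_{t,q}(X)$, and dualizing the vector-valued square function turns the cotype inequality for $\{P_t^*\}_{t>0}$ on $L_{p'}(\O;X^*)$ at exponents $(p',q')$ into the claimed type inequality for $X$ with constant $\max\bigl((p')^{1/q'},p\bigr)\,\mathsf M_{t,q}(X)$.
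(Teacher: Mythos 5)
Your high-level plan — reduce to a group via Fendler's dilation, bring in martingale cotype, then dualize for part (ii) — shares its shell with the paper's proof, and the duality step for (ii) is essentially correct (it matches the paper's Theorem~\ref{dual} together with \eqref{duality type-cotype}). But the core of part (i), as you have sketched it, contains a genuine gap.

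The claim that, after passing to the Fendler dilation, the Calder\'on pieces $f_n=\int_{2^n}^{2^{n+1}}\psi(tB)\varphi(tB)f\,\frac{dt}t$ ``essentially behave like a martingale difference sequence'' is not justified and is, in fact, the whole difficulty of the problem. Fendler's theorem produces a $C_0$-group of regular isometries on a larger $L_p$-space; it does \emph{not} produce a filtration, and the dyadic spectral decomposition coming from the $H^\infty$ calculus of $B$ has no martingale structure attached to it. (The only classical device that turns a semigroup into a genuine martingale is Rota's dilation, which requires a symmetric Markovian semigroup — exactly the hypothesis the paper is trying to dispense with.) With only a bounded $H^\infty$ calculus and $\ell_q$-boundedness of resolvents, one gets the projection/duality statement of Theorem~\ref{proj} and the $\varphi$-independence of the $g$-function (Proposition~\ref{mac}), but \emph{not} the square-function inequality with constant $\mathsf M_{c,q}(X)$; that inequality has no automatic proof from functional calculus alone.

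What the paper actually does at this point is quite different. Fendler's dilation is applied to $\{T_t\}$ (not $\{P_t\}$) to transfer the whole problem to the translation group on $\real$, via the subordination identity and the ergodic means $M_s$. In that model, $t\partial_t P^\tau_t$ is an explicit convolution operator whose $L_q(\real_+)$-valued kernel $\phi$ is shown (via \eqref{f-kernel} and \eqref{f-kernel estimate}) to satisfy the H\"older conditions \eqref{Holder}. The martingale cotype then enters through Theorem~\ref{fML}, i.e.\ through genuine dyadic martingales on $\real^d$: the quasi-orthogonal/atomic decomposition of section~\ref{An atomic decomposition}, Wilson's dyadic-like families and smooth atoms (Lemmas~\ref{Haar-like type}--\ref{atom type}), and the Uchiyama decomposition (Lemma~\ref{Uchiyama}). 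The constant $p^{1/q}$ is obtained from the Chang--Wilson--Wolff weighted norm inequality \eqref{weighted Sd} together with the $L_{(p/q)'}$-boundedness of the Hardy--Littlewood maximal operator, and the constant $p'$ from the Calder\'on--Zygmund weak $(1,1)$ bound (Lemma~\ref{Hormander} plus \cite[Theorem~V.3.4]{GRF}) and Marcinkiewicz interpolation. Your ``Stein-type device for $\{|P_t|\}$'' and ``$q$-th power interpolation trick'' do not correspond to anything that would deliver these bounds; without the singular-integral/dyadic-martingale machinery, the argument does not close, and in particular does not produce the optimal growth rates.
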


The two above inequalities can be reformulated in another (clearer) way, for instance, the first one reads as
  \begin{eqnarray*}
 \mathsf{L}^{P}_{\cc,q, p}(X)\les
  \left\{\begin{array}{lcl}
\displaystyle p^{\frac1{q}}\,\mathsf{M}_{\cc,q}(X) & \textrm{ if}& p\ge q, \\
 \displaystyle p'\,\mathsf{M}_{\cc,q}(X)& \textrm{ if}& p<q.
  \end{array}\right.
  \end{eqnarray*}

\begin{rk}\label{optimal cotye}
 All the growth orders, except the one on $\mathsf{L}^{P}_{\tt,q, p}(X)$ as $p\to 1$, are optimal since they are already so in the scalar case $X=\com$. More precisely,
  \begin{enumerate}[\rm(i)]
  \item  $\mathsf{L}^{P}_{\cc,q, p}(\com)\ges\max\big(p^{\frac1q},\, p'\big)$ for all $1<p<\8$ when $\{P_t\}_{t> 0}$ is the classical Poisson semigroup on $\real$ (see Proposition~\ref{Optimality} below);
  \item  $\mathsf{L}^{P}_{\tt,q, p}(\com)\ges p$ as $p\to\8$ when $\{P_t\}_{t> 0}$ is the Poisson semigroup subordinated to a symmetric diffusion semigroup $\{T_t\}_{t> 0}$, as shown by  Zhendong Xu and Hao Zhang \cite{XZ}; in fact, they proved the stronger inequality $\mathsf{L}^{T}_{\tt,q, p}(\com)\ges p$ as $p\to\8$ for a symmetric diffusion semigroup $\{T_t\}_{t> 0}$.
     \end{enumerate}
  \end{rk}

Part (i) of the above theorem cannot hold for the semigroup $\{T_t\}_{t>0}$ itself without additional assumption (see Remark~\ref{necessity of analyticity} below). It turns out that the missing condition  is the analyticity of $\{T_t\}_{t>0}$ on $L_p(\O; X)$. Recall that $\{T_t\}_{t>0}$ is  analytic on $L_p(\O; X)$ if $\{T_t\}_{t>0}$  extends to  a bounded analytic function from an open sector $\Si_{\b_0}=\big\{z\in\com: |{\rm arg}(z)|<\b_0\big\}$ to $B\big(L_p(\O; X)\big)$ for some $0<\b_0\le\frac\pi2$, where $B(Y)$ denotes the space of bounded linear operators on a Banach space $Y$. In this case,
 \beq\label{Ana bound}
 \mathsf{T}_{\b_0}=\sup\big\{\big\|T_z\big\|_{B(L_p(\O;X))}: z\in\Sigma_{\b_0}\big\}<\8.
 \eeq

\begin{thm}\label{Heat ML}
 Let $X$ and $p, q$ be as above.
  \begin{enumerate}[\rm(i)]
  \item  If $X$ is of martingale type $q$, then $X$ is  of  Luzin type $q$ relative to $\{T_t\}_{t> 0}$ and
  $$\mathsf{L}^{T}_{\tt,q, p}(X)\les \max\big(p,\,( p')^{\frac1{q'}}\big) \mathsf{M}_{t,q}(X).$$
 \item  Assume additionally that $\{T_t\}_{t>0}$ satisfies \eqref{Ana bound}.  Let $\b_q=\b_0\min(\frac{p}{q}\,,\frac{p'}{q'})$.
 If $X$ is of martingale cotype $q$, then $X$ is  of  Luzin cotype $q$ relative to $\{T_t\}_{t> 0}$ and
 $$\mathsf{L}^{T}_{\cc,q, p}(X)\les\b_q^{-3}\,\mathsf{T}_{\b_0}^{\min(\frac{p}{q},\,\frac{p'}{q'})}\,\max\big(p^{\frac2q},\,(p')^{1+\frac1{q'}}\big) \mathsf{M}_{c,q}(X).$$
   \end{enumerate}
 \end{thm}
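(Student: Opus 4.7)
Part (i) is immediate from Theorem~\ref{Poisson ML}(ii) and Remark~\ref{Poisson vs Heat}: the subordination formula \eqref{subordination} gives the pointwise inequality $\G^P_q(f)\le\G^T_q(f)$, so
$$\|f-\mathsf F f\|_{L_p(\O;X)}\le \mathsf{L}^P_{t,q,p}(X)\,\|\G^P_q(f)\|_{L_p(\O)}\le \mathsf{L}^P_{t,q,p}(X)\,\|\G^T_q(f)\|_{L_p(\O)},$$
and the claimed bound on $\mathsf{L}^T_{t,q,p}(X)$ follows from the bound on $\mathsf{L}^P_{t,q,p}(X)$ already provided by Theorem~\ref{Poisson ML}(ii).

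For Part (ii) the pointwise subordination runs in the \emph{wrong} direction, so the analyticity hypothesis is indispensable to transfer the Poisson cotype estimate of Theorem~\ref{Poisson ML}(i) to $\{T_t\}$. The plan is to first invoke Theorem~\ref{Poisson ML}(i), and then establish an operator-theoretic comparison
$$\|\G^T_q(f)\|_{L_p(\O)}\les \b_q^{-3}\,\mathsf{T}_{\b_0}^{\min(p/q,p'/q')}\,\max\bigl(p^{1/q},p'^{1/q'}\bigr)\,\|\G^P_q(f)\|_{L_p(\O)}.$$
Multiplying the two bounds reproduces the announced constant, since a direct case-analysis gives $\max(p^{1/q},p')\max(p^{1/q},p'^{1/q'})\approx\max(p^{2/q},p'^{1+1/q'})$.

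To prove the comparison I would use holomorphic functional calculus on the sector $\Sigma_{\b_0}$. With $\phi_T(z)=ze^{-z}$ and $\phi_P(z)=\sqrt z\,e^{-\sqrt z}$ one has $\phi_T(tA)f=tAT_tf$ and $\phi_P(t^2A)f=t\sqrt A\,P_tf$; a Cauchy-type contour argument then produces a scalar kernel $K(t,s)$, independent of $f$, such that
$$\phi_T(tA)f=\int_0^\infty K(t,s)\,\phi_P(sA)f\,\frac{ds}{s}.$$
Shifting the defining contour to the rays of angle $\pm\b_q$ inside $\Sigma_{\b_0}$ and interpolating (via Phragm\'en--Lindel\"of / the three-lines lemma) between the coarse bound $\|T_z\|\le\mathsf{T}_{\b_0}$ on $\partial\Sigma_{\b_0}$ and the central vertical ray (where no analyticity cost is paid) yields Schur-type bounds on $K$ with constants of order $\b_q^{-O(1)}\mathsf{T}_{\b_0}^{\min(p/q,p'/q')}$. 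A generalized Minkowski / Young-type inequality on the multiplicative group $(\real_+,dt/t)$ then transfers the $L_p(\O;L_q(dt/t;X))$-estimate from $\G^P_q$ to $\G^T_q$, with Hardy-type ingredients accounting for the extra factor $\max(p^{1/q},p'^{1/q'})$.

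The main obstacle is the optimal choice of contour and the bookkeeping of loss factors. The full sector $\Sigma_{\b_0}$ is too crude, whereas the sub-sector $\Sigma_{\b_q}$ with $\b_q=\b_0\min(p/q,p'/q')$ dictated by the $L_p$--$L^q$ scaling strikes the correct balance, the exponent $\min(p/q,p'/q')$ on $\mathsf{T}_{\b_0}$ being the interpolation price. Tracking the cumulative $\b_q^{-3}$ loss through the iterated Cauchy integrals and resolvent estimates on the boundary rays is the delicate technical step, but no essentially new ingredient beyond Theorem~\ref{Poisson ML} and the standard holomorphic functional calculus for analytic semigroups of regular operators should be needed.
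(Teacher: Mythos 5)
Part (i) of your proposal is correct and coincides with the paper's argument: combine Remark~\ref{Poisson vs Heat} with Theorem~\ref{Poisson ML}(ii).

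For part (ii), your high-level strategy — deduce a comparison $\|\mathcal{G}^T_q(f)\|_{L_p}\lesssim D\,\|\mathcal{G}^P_q(f)\|_{L_p}$ by holomorphic functional calculus and then multiply by the bound of Theorem~\ref{Poisson ML}(i) — is exactly what the paper does via Proposition~\ref{mac} applied to $\f(z)=-ze^{-z}$ and $\psi(z)=-\sqrt z\,e^{-\sqrt z}$. But the mechanism you describe for the comparison has a genuine gap. You posit a \emph{scalar} kernel $K(t,s)$ on $(\real_+,dt/t)$ with $\phi_T(tA)f=\int_0^\infty K(t,s)\phi_P(sA)f\,\frac{ds}{s}$ for \emph{every} $f$, followed by a Young/Schur bound. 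Such a $K$, if it existed in $L_1(\real_+,ds/s)$, would be entirely independent of the analyticity of $\{T_t\}$ and would give the pointwise estimate $\mathcal{G}^T_q(f)\le\|K\|_{L_1}\,\mathcal{G}^P_q(f)$, hence $\mathsf{L}^T_{c,q,p}\lesssim\mathsf{L}^P_{c,q,p}$ unconditionally. This contradicts Remark~\ref{necessity of analyticity}: for the translation group the left-hand side is infinite while the right-hand side is finite. So either no such integrable scalar kernel exists (indeed, the Mellin symbol one is forced into grows exponentially on the critical line, so the candidate $K$ is only distribution-valued), or, if one tries to produce it by a Cauchy contour, what one actually gets is an operator-valued integral against $(z-A)^{-1}$ along $\Gamma\subset\Sigma_{\b_0}$, not a scalar convolution. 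Young's inequality is the wrong tool and cannot account for the $p$-dependent factor.

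The ingredient that is missing from your proposal, and that is indispensable, is the $\el_q$-boundedness of the resolvent family $\{z(z-A)^{-1}\}$ (Propositions~\ref{A} and \ref{T-analyticity}), used through the paper's Proposition~\ref{mac}. There the resolution of identity \eqref{resolution} for $\psi$ plays the role you hoped $K$ would play, the Cauchy integral yields a triple integral $\Phi_2\Phi\Phi_1$ as in the proof of Theorem~\ref{proj}, and it is precisely the $\el_q$-boundedness constant $C_\gamma\lesssim(\b_q-\b)^{-2}\,\mathsf{T}_{\b_0}^{\min(p/q,p'/q')}\max(p'^{1-p/q},p^{1-p'/q'})$ (with $\b=\frac\pi2-\g$, $\b_q=\b_0\min(p/q,p'/q')$) that supplies both the analyticity factor $\mathsf{T}_{\b_0}^{\min(p/q,p'/q')}$ and the $p$-growth. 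Choosing $\b=\b_q/2$ gives the $\b_q^{-3}$ and multiplying by $\max(p^{1/q},p')$ from Theorem~\ref{Poisson ML}(i) gives $\max(p^{2/q},p'^{1+1/q'})$. Your intermediate factor $\max(p^{1/q},p'^{1/q'})$ agrees with the paper's $\max(p'^{1-p/q},p^{1-p'/q'})$ only asymptotically; this is harmless, but note it is not derived from Hardy or Schur estimates — it comes from interpolating the maximal ergodic ($\el_\infty$- and $\el_1$-boundedness) inequalities of Lemma~\ref{lq-bdd ergodic} with the analyticity bound $\mathsf{T}_{\b_0}$.
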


\medskip

The two previous theorems considerably improve Theorem~A. Firstly, the semigroup $\{T_t\}_{t>0}$ now acts on $L_p(\O)$ for a single $p$. Secondly, the markovianity or submarkovianity is not assumed (in fact, $T_t(1)$ is even not defined if the measure on $\O$ is infinite). Thirdly, the symmetry is not needed either since the semigroup does not act on $L_2(\O)$ if $p\neq2$.

Another improvement concerns the precise estimates on the best constants, the present estimates are much better than all previously known ones, even in the scalar case (see section~\ref{The scalar case revisited} below for historical comments). Moreover, except one case, they  give the optimal orders of growth as $p\to1$ and as $p\to\8$, as already pointed out in Remark~\ref{optimal cotye}. This is  perhaps a major novelty of our method.

\medskip

The aforementioned optimality  allows us to answer a question raised by Naor and Young about the optimal orders of $\mathsf{L}^{P}_{\cc,q, p}(X)$ and $\mathsf{L}^{T}_{\cc,q, p}(X)$ when $\{T_t\}_{t>0}$ is the heat semigroup on $\real^d$ (see the appendix of  \cite{NaYo1}). In fact, we will show a much stronger result. Let $\f:\real^d\to\com$ be an integrable  function satisfying
\beq\label{Holder}
 \left\{\begin{array}{lcl}
 \displaystyle |\f(x)|\le\frac{1}{(1+|x|)^{d+\e}}, & x\in\real^d\\
 \displaystyle  |\f(x)-\f(y)|\le \frac{|x-y|^\d}{(1+|x|)^{d+\e+\d}}+\frac{|x-y|^\d}{(1+|y|)^{d+\e+\d}},& x, y\in\real^d\\
  \displaystyle \int_{\real^d}\f(x)dx=0
  \end{array}\right.
  \eeq
 for some positive constants $\e$ and $\d$.

 We will also need $\f$  to be {\it nondegenerate}  in the sense that there exists another function $\p$ satisfying \eqref{Holder} such that
 \beq\label{reproduce}
  \int_0^\8 \wh\f(t\xi)\, \wh\p (t\xi)\, \frac{dt}{t} = 1,\quad \forall\xi\in\real^d\setminus\{0\}.
   \eeq
This  nondegeneracy allows us to use  the Calder\'on reproducing formula. There exist plenty of functions  satisfying these conditions, for instance, the kernel of $t\frac{\partial}{\partial t} T_t$, where $\{T_t\}_{t>0}$ is either the heat or Poisson semigroup on $\real^d$, as well as any Schwartz function $\f$ verifying that for any $\xi\in\real^d\setminus\{0\}$ there is $t>0$ such that $\wh\f(t\xi)\neq0$.

\medskip

Let $\f_t(x)=\frac1{t^d}\f(\frac{x}{t})$ for $x\in\real^d$ and $t>0$. We define
 \beq\label{G-function}
 G_{q,\f}(f)(x)=\Big(\int_0^\8\|\f_t*f(x)\|_X^q\,\frac{dt}t\Big)^{\frac1q}, \quad x\in\real^d
 \eeq
for any (reasonable) function $f:\real^d\to X$. Let $\mathsf{L}^{\f}_{\cc,q, p}(X)$ be the best constant $c$ such that
 $$\big\|G_{q,\f}(f)\big\|_{L_p(\real^d)}\le c\,\big\|f\big\|_{L_p(\real^d;X)},\quad f\in L_p(\real^d; X).$$
Similarly, we introduce $\mathsf{L}^{\f}_{\tt,q, p}(X)$ for the reverse inequality (with $c^{-1}$ instead of $c$).

\begin{thm}\label{fML}
 Let $X$ be a Banach space and $1<p, q<\8$. Assume that $\f$ satisfies \eqref{Holder}.
 \begin{enumerate}[\rm(i)]
 \item  If $X$ is of martingale cotype $q$, then
  $$\mathsf{L}^{\f}_{\cc,q, p}(X)\les_{d, \e, \d}\max\big(p^{\frac1q},\, p'\big)\mathsf{M}_{\cc,q}(X).$$
 \item Assume additionally that $\f$ is nondegenerate.
  If $X$ is of martingale type $q$, then
  $$\mathsf{L}^{\f}_{\tt,q, p}(X)\les_{d, \e, \d}\max\big(p,\, (p')^{\frac1{q'}}\big)\mathsf{M}_{\tt,q}(X).$$
  \end{enumerate}
  \end{thm}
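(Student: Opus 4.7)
The plan is to reduce part (i) to Theorem~\ref{Poisson ML}(i) for the classical Poisson semigroup on $\real^d$ via a Calder\'on-Zygmund almost-orthogonality argument, and then derive (ii) from (i) by duality.

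For (i), let $\{P_t\}_{t>0}$ denote the classical Poisson semigroup on $\real^d$. A direct computation gives $t\frac{\pa}{\pa t}P_tf = \p_t*f$, where $\p(x) = c_d(|x|^2-d)(1+|x|^2)^{-(d+3)/2}$. This $\p$ is Schwartz with $\int_{\real^d}\p\,dx=0$, hence satisfies \eqref{Holder} with any $\d\in(0,1]$ and arbitrarily large $\e$; moreover $\wh\p(\xi) = -|\xi|e^{-|\xi|}$ vanishes only at the origin, so $\p$ is nondegenerate. Since this semigroup is symmetric submarkovian (hence regular), Theorem~\ref{Poisson ML}(i) yields
\beqn
\|G_{q,\p}(f)\|_{L_p(\real^d)} \les \max(p^{1/q},p')\,\mathsf{M}_{c,q}(X)\,\|f\|_{L_p(\real^d;X)}.
\eeqn
To pass from this $\p$ to an arbitrary $\f$ satisfying \eqref{Holder}, the nondegeneracy of $\p$ furnishes $\wt\p$ satisfying \eqref{Holder} with $\int_0^\8\wh\p(t\xi)\,\wh{\wt\p}(t\xi)\,dt/t=1$, and Calder\'on's reproducing formula gives $f=\int_0^\8\wt\p_t*\p_t*f\,dt/t$, whence
\beqn
\f_s*f(x) = \int_0^\8(\f_s*\wt\p_t)*(\p_t*f)(x)\,\frac{dt}{t}.
\eeqn
The key technical ingredient would be the almost-orthogonality bound
\beqn
|\f_s*\wt\p_t(x)|\les_{d,\e,\d}\,\min\!\left(\frac{s}{t},\frac{t}{s}\right)^{\!\d}\,\frac{\max(s,t)^{-d}}{\big(1+|x|/\max(s,t)\big)^{d+\e}},
\eeqn
proved by splitting the cases $s\le t$ and $s\ge t$ and using the cancellation $\int\f=0$ (respectively $\int\wt\p=0$) together with the H\"older modulus in \eqref{Holder}. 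Bounding the bump pointwise by the Hardy-Littlewood maximal operator $M$ and invoking Schur's lemma on $(0,\8)$ with measure $dt/t$ (noting $\int_0^\8\min(u,1/u)^\d\,du/u<\8$) gives
\beqn
G_{q,\f}(f)(x)\les\Big(\int_0^\8\big[M(\|\p_t*f\|_X)(x)\big]^q\,\frac{dt}{t}\Big)^{1/q},
\eeqn
and the Fefferman-Stein vector-valued maximal inequality in $L_p(L_q(dt/t))$ then yields $\|G_{q,\f}(f)\|_{L_p}\les\|G_{q,\p}(f)\|_{L_p}$, completing (i).

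For (ii), the nondegeneracy of $\f$ furnishes $\p$ satisfying \eqref{Holder} with $\int_0^\8\wh\f(t\xi)\wh\p(t\xi)\,dt/t=1$, hence $f=\int_0^\8\p_t*\f_t*f\,dt/t$. Testing against $g\in L_{p'}(\real^d;X^*)$, moving $\p_t$ across the pairing to its reflection $\p^\vee(z)=\p(-z)$ and applying H\"older successively in $t$ and in $x$ gives
\beqn
\Big|\int\la f(x),g(x)\ra\,dx\Big|\le\|G_{q,\f}(f)\|_{L_p(\real^d)}\,\|G_{q',\p^\vee}(g)\|_{L_{p'}(\real^d)}.
\eeqn
Since $\p^\vee$ also satisfies \eqref{Holder}, and since $X$ being of martingale type $q$ implies $X^*$ is of martingale cotype $q'$ with $\mathsf{M}_{c,q'}(X^*)\les\mathsf{M}_{t,q}(X)$, part (i) applied in $X^*$ with indices $(p',q')$ bounds the second factor by $\max(p,p'^{1/q'})\,\mathsf{M}_{t,q}(X)\,\|g\|_{L_{p'}(\real^d;X^*)}$. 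Taking the supremum over $g$ yields (ii).

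The principal obstacle will be the almost-orthogonality kernel bound on $\f_s*\wt\p_t$: although classical in Littlewood-Paley theory, it requires careful tracking of the decay, H\"older regularity, and cancellation in \eqref{Holder}, and checking that all implicit constants depend only on $d,\e,\d$. A secondary issue is ensuring that the $p$-dependence of the Fefferman-Stein constant does not degrade the optimal bound $\max(p^{1/q},p')$ inherited from Theorem~\ref{Poisson ML}(i).
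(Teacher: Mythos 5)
Your proposal for part (i) is circular within the architecture of this paper. You propose to deduce Theorem~\ref{fML}(i) from Theorem~\ref{Poisson ML}(i) applied to the classical Poisson semigroup on $\real^d$. But the paper proves Theorem~\ref{Poisson ML}(i) \emph{after} and \emph{via} Theorem~\ref{fML}: the proof of Theorem~\ref{Poisson ML} transfers (through Fendler's dilation) to the translation group of $\real$, computes the kernel $\phi$ of $t\,\partial_t P^\tau_t$ explicitly, shows $\phi\in\H_{1/2,1}$, and then invokes Theorem~\ref{fML}. So your argument uses the conclusion to prove the hypothesis. A priori you could appeal to the Poisson case from the earlier works \cite{LP0,LP1,LP2}, but those do not give the optimal order $\max(p^{1/q},p')$, which is precisely what Theorem~\ref{fML} is designed to produce, so that escape route also fails.

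Even if one granted the base Poisson bound independently, the transfer step has the problem you yourself flag and do not resolve: the Fefferman--Stein vector-valued maximal inequality in $L_p(L_q(\real_+))$ carries its own $p$-dependence (of order $\sim p'$ as $p\to1$). Composing it with the assumed Poisson bound $\max(p^{1/q},p')$ would yield $\sim(p')^2$ as $p\to1$, degrading the stated optimal order. Also a minor slip: the kernel $\p(x)=c_d(|x|^2-d)(1+|x|^2)^{-(d+3)/2}$ is not Schwartz (it decays only like $|x|^{-(d+1)}$), so it satisfies \eqref{Holder} with $\e=1$ rather than ``arbitrarily large $\e$''; this is harmless but should be stated correctly.

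The paper's proof is self-contained and does not pass through any semigroup result. It first establishes the weighted norm inequality for the Lusin area function $S_{q,\f}$ at level $p=q$ by a quasi-orthogonal/atomic decomposition against dyadic-like martingales (Lemmas~\ref{atomic dec}, \ref{Haar-like type}, \ref{atom type}) together with the Chang--Wilson--Wolff trick; then passes from $S_{q,\f}$ to $G_{q,\f}$ via Wilson's intrinsic square functions; then handles $p>q$ by $L_r$-duality against weights with $\|M(w)\|_{L_r}\lesssim r'$, and $p<q$ by viewing $K(x)=\{\f_t(x)\}_{t>0}$ as a Calder\'on--Zygmund kernel valued in $L_q(\real_+)$ (Lemma~\ref{Hormander}), getting weak type $(1,1)$ and interpolating. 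Your duality argument for part~(ii) is essentially the paper's Step~5 and would be fine once part~(i) is in place.
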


Let  $\{\mathbb{H}_t\}_{t>0}$ be the classical heat semigroup on $\real^d$ whose convolution kernel  is given by
 $$\mathbb{H}_t(x)=(4\pi t)^{-\frac d2}e^{-\frac{|x|^2}{4t}}\,.$$
Its subordinated Poisson semigroup is the usual Poisson semigroup $\{\mathbb{P}_t\}_{t>0}$  with the convolution kernel
 $$\mathbb{P}_t(x)=\frac{c_d\,t}{(|x|^2+t^2)^{(d+1)/2}}\,.$$
The above theorem implies the following corollary that resolves Naor and Young's problem.

\begin{cor}\label{NY}
We have
 \begin{align*}
 \mathsf{L}^{\mathbb{P}}_{\cc,q, p}(X)
 \les\max\big(p^{\frac1q},\, p'\big) \mathsf{M}_{\cc,q}(X)\;\text{ and }\;
 \mathsf{L}^{\mathbb{H}}_{\cc,q, p}(X)
 \les_d\max\big(p^{\frac1q},\, p'\big) \mathsf{M}_{\cc,q}(X).
  \end{align*}
 Moreover,
  $$\mathsf{L}^{\mathbb{H}}_{\cc,q, p}(X)\ges\mathsf{L}^{\mathbb{P}}_{\cc,q, p}(\com)\ges\max\big(p^{\frac1q},\, p'\big)\;\text{ and }\;
  \mathsf{L}^{\mathbb{H}}_{\cc,q, q}(X)\ges \mathsf{L}^{\mathbb{P}}_{\cc,q, q}(X)
 \ges \mathsf{M}_{\cc,q}(X).$$
\end{cor}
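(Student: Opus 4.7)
The plan is to derive both upper bounds directly from Theorem~\ref{fML}(i) and to obtain the lower bounds by chaining Remark~\ref{Poisson vs Heat}, Remark~\ref{optimal cotye}(i), and the known reverse direction of Theorem~A.

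\textbf{Upper bounds.} I would realize $\mathcal{G}_q^{\mathbb{P}}$ and $\mathcal{G}_q^{\mathbb{H}}$ as $G_{q,\f}$ operators. Since $\mathbb{P}_t(x)=t^{-d}\mathbb{P}_1(x/t)$, setting $\f^{\mathbb{P}}(x)=-d\,\mathbb{P}_1(x)-x\cdot\nabla\mathbb{P}_1(x)$ gives $t\partial_t\mathbb{P}_t=(\f^{\mathbb{P}})_t$; this $\f^{\mathbb{P}}$ decays like $(1+|x|)^{-(d+1)}$, is H\"older of any exponent $\d\in(0,1)$, and is mean zero after one integration by parts, so it satisfies \eqref{Holder}. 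For the heat kernel, $\mathbb{H}_t(x)=s^{-d}\mathbb{H}_1(x/s)$ with $s=\sqrt{t}$; after the change of variable $t\mapsto s^2$, which preserves the Haar measure $dt/t$ up to a factor $2$, the same prescription produces a Schwartz function $\f^{\mathbb{H}}$ (a polynomial times a Gaussian) that also satisfies \eqref{Holder}. Applying Theorem~\ref{fML}(i) then yields the two upper estimates, the Poisson one with an absolute implied constant and the heat one with a constant depending only on $d$.

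\textbf{Lower bounds.} For the first chain, I observe that for any nonzero $X$ and any unit vector $e\in X$, testing on scalar-times-$e$ functions yields $\mathsf{L}^{\mathbb{P}}_{c,q,p}(X)\ge\mathsf{L}^{\mathbb{P}}_{c,q,p}(\com)$; combining this with $\mathsf{L}^{\mathbb{H}}_{c,q,p}(X)\ges\mathsf{L}^{\mathbb{P}}_{c,q,p}(X)$ from Remark~\ref{Poisson vs Heat} gives the first inequality. The second, $\mathsf{L}^{\mathbb{P}}_{c,q,p}(\com)\ges\max(p^{1/q},p')$, comes from Remark~\ref{optimal cotye}(i). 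To lift that 1D estimate to $\real^d$ I would use the Fourier identity $e^{-t|\xi|}|_{\xi_2=\cdots=\xi_d=0}=e^{-t|\xi_1|}$, which shows $\mathbb{P}_t^{(d)}f=\mathbb{P}_t^{(1)}g(x_1)$ whenever $f(x)=g(x_1)$, and then truncate by $\mathbb{1}_{[-R,R]^{d-1}}(x')$: restricting the outer $L_p$ norm to the sub-slab $x'\in[-R/2,R/2]^{d-1}$, both sides scale by the same power $R^{(d-1)/p}$, while the boundary errors vanish as $R\to\infty$, producing a scale-invariant (in particular $p$-uniform) transfer of the lower bound. For the second chain, $\mathsf{L}^{\mathbb{H}}_{c,q,q}(X)\ges\mathsf{L}^{\mathbb{P}}_{c,q,q}(X)$ is again Remark~\ref{Poisson vs Heat}, and $\mathsf{L}^{\mathbb{P}}_{c,q,q}(X)\ges\mathsf{M}_{c,q}(X)$ is the Lusin-cotype-implies-martingale-cotype direction of Theorem~A at $p=q$, for which I would extract the constant from the Brownian-motion-based construction in \cite{LP1}.

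\textbf{Main obstacle.} The delicate point is not any single estimate but the bookkeeping of $p$-dependence in the lower bounds. The dimensional reduction must introduce only a $d$-dependent, $p$-independent loss so that the sharp order $\max(p^{1/q},p')$ survives; writing everything in terms of a single test function on the sub-slab as above achieves this. Similarly, for $\mathsf{L}^{\mathbb{P}}_{c,q,q}(X)\ges\mathsf{M}_{c,q}(X)$, one needs the implied constant to be truly absolute (no hidden $q$-growth), which requires a careful reading of the relevant proof in \cite{LP1}; this is where most of the actual work lies, since the other pieces of the corollary are either immediate from earlier results or follow from the elementary cutoff transference just outlined.
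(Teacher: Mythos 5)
Your overall architecture is right — upper bounds from the machinery already built, lower bounds by chaining Remark~\ref{Poisson vs Heat}, Proposition~\ref{Optimality}, and the converse direction of Theorem~A — but there is one genuine gap concerning the dimension dependence of the Poisson upper bound.

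You propose to obtain both upper estimates by realizing $\mathcal{G}^{\mathbb{P}}_q$ and $\mathcal{G}^{\mathbb{H}}_q$ as $G_{q,\f}$ operators and invoking Theorem~\ref{fML}(i). That works for $\mathbb{H}$, but the constant in Theorem~\ref{fML} is $\les_{d,\e,\d}$, so it depends on $d$ for any kernel on $\real^d$, including $\f^{\mathbb{P}}$. Your claim that the Poisson bound comes out ``with an absolute implied constant'' is therefore unjustified: this route only gives $\mathsf{L}^{\mathbb{P}}_{c,q,p}(X)\les_d\max(p^{1/q},p')\,\mathsf{M}_{c,q}(X)$, which is strictly weaker than the dimension-free estimate asserted in the corollary. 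The paper obtains the $d$-free Poisson bound not from Theorem~\ref{fML} but from Theorem~\ref{Poisson ML}(i): since $\{\mathbb{H}_t\}_{t>0}$ is a strongly continuous semigroup of regular (indeed positive) contractions on $L_p(\real^d)$ and $\{\mathbb{P}_t\}_{t>0}$ is its subordinated Poisson semigroup, Theorem~\ref{Poisson ML}(i) applies directly and its constant is independent of the measure space, hence of $d$. The remark following Corollary~\ref{NY} states this explicitly. So you should split the two upper bounds: $\mathbb{P}$ via Theorem~\ref{Poisson ML}(i), $\mathbb{H}$ via Theorem~\ref{fML}(i) (or equivalently via Theorem~\ref{Heat ML}(ii), since $\{\mathbb{H}_t\}$ is analytic with $d$-dependent bound on $L_p(\real^d;X)$).

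On the lower bounds your outline is sound and matches the paper's. The paper also reduces the scalar optimality to $d=1$ (``It suffices to consider $\real$'' in the proof of Proposition~\ref{Optimality}), leaving the transference implicit; your cutoff-on-a-slab argument is a reasonable way to make that explicit and correctly isolates that the loss must be $p$-uniform. For $\mathsf{L}^{\mathbb{P}}_{c,q,q}(X)\ges\mathsf{M}_{c,q}(X)$, you correctly identify that one must track the constant through the Brownian-motion argument of \cite{LP0,LP1}; the paper does not reprove this and, like you, relies on that reference, so acknowledging that this is where the real work sits is fair. Apart from the dimension-independence issue above, the proposal is compatible with the paper's own (very brief) proof.
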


\begin{rk}
 It is worth to point out that the estimate on $ \mathsf{L}^{\mathbb{P}}_{\cc,q, p}(X)$ is independent of $d$ thanks to Theorem~\ref{Poisson ML} (i). It would be interesting to have a dimension free estimate for the heat semigroup too. This is related to another problem, whether the analyticity constant of $\{\mathbb{H}_t\}_{t>0}$ on $L_p(\real^d; X)$ relative to an appropriate angle can be controlled by a dimension free constant (see Example~\ref{Laplacian} below).
\end{rk}

\begin{problem} (i) Does the second inequality in the first part of Corollary~\ref{NY} hold with a constant independent of the dimension $d$? 

(ii) It would be also interesting to determine the optimal orders of $ \mathsf{L}^{\mathbb{P}}_{\tt,q, p}(X)$ and  $\mathsf{L}^{\mathbb{H}}_{\tt,q, p}(X)$ as $p\to 1$ or $p\to \8$.
\end{problem}

Note that (i) above remains open even for $X=\com$ (see Problem~7 in \cite{LP-Optimality}). It is also so for (ii) as $p \to\8$ (see section~\ref{The scalar case revisited} below and  \cite{LP-Optimality} for more information).
\medskip

Apart from the inequality $\big\|\G^P_q(f)\big\|_{L_p(\O)}\le \mathsf{L}^{P}_{c,q, p}(X) \big\|f\big\|_{L_p(\O; X)}$,  the following variant is also useful:
 $$\Big(\int_0^\8\big\|t\frac{\partial}{\partial t}\, P_t(f)\big\|^r_{L_p(\O; X)}\,\frac{dt}t\Big)^{\frac1r}\le c \big\|f\big\|_{L_p(\O; X)}$$
when $X$ is of martingale cotype $q$ (see, for instance, \cite{LaNa}). Inequalities of this type are less delicate than the previous ones. It is well known that if $X$ is of martingale cotype $q$, then $L_p(\O; X)$ is of martingale cotype $\max(p, q)$, so the above inequality can hold only for $r=\max(p, q)$. We can, of course, consider similar variants in the situation of Theorem~\ref{Heat ML} and Theorem~\ref{fML} as well as their reverse inequalities  when $X$ is of martingale type $q$; but we will concentrate on the above inequality and on the case $1<p\le q$ for illustration.

\medskip

Theorem~\ref{Poisson ML} (i) easily implies the following

\begin{cor}\label{Poisson MLbis}
Let $\{T_t\}_{t>0}$ and $\{P_t\}_{t>0}$ be as in Theorem~\ref{Poisson ML}. Assume that $X$ is of martingale cotype $q$ and $1<p\le q$. Then
 $$\Big(\int_0^\8\big\|t\frac{\partial}{\partial t}\, P_t(f)\big\|^q_{L_p(\O; X)}\,\frac{dt}t\Big)^{\frac1q}\les \max\big((p')^{\frac1q},\,  \mathsf{M}_{\cc,q}(X)\big)\big\|f\big\|_{L_p(\O; X)}$$
for all $f\in L_p(\O; X)$. Moreover, the constant on the right hand side is optimal as $p\to1$.
  \end{cor}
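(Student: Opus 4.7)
The plan is to first obtain an easy but lossy estimate via Minkowski and Theorem \ref{Poisson ML}(i), then sharpen the constant by exploiting the martingale cotype of the Bochner space $L_p(\Omega;X)$ itself.

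Since $1<p\le q$, applying the generalized Minkowski inequality to $g(\omega,t):=\|t\partial_t P_tf(\omega)\|_X$ yields
\[
A:=\Bigl(\int_0^\infty \|t\partial_t P_tf\|_{L_p(\Omega;X)}^q\,\tfrac{dt}{t}\Bigr)^{1/q}\le \|\mathcal{G}^P_q(f)\|_{L_p(\Omega)},
\]
so Theorem \ref{Poisson ML}(i) directly gives $A\lesssim p'\,\mathsf{M}_{c,q}(X)\,\|f\|_{L_p(\Omega;X)}$. This already matches the target $\max(p'^{1/q},\mathsf{M}_{c,q}(X))$ up to a factor $p'^{1/q'}$, and it is sharp enough whenever $\mathsf{M}_{c,q}(X)\gtrsim p'^{1/q'}$. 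It is however \emph{not} sharp as $p\to 1$: in the scalar case $X=\mathbb{C}$, $q=2$ the target becomes $p'^{1/2}$ while Minkowski gives only $p'$.

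To close this gap I would reinterpret $A$ as a square-function estimate for $\{P_t\}$ acting on the single Banach space $Y:=L_p(\Omega;X)$. Two ingredients are needed. First, a Pisier-type computation — combine Burkholder's scalar $L_p$ martingale cotype-$q$ inequality (with constant $\lesssim p'^{1/q}$ for $p\le q$) with the martingale cotype-$q$ inequality for $X$ applied fibrewise — to obtain
\[
\mathsf{M}_{c,q}(L_p(\Omega;X))\lesssim\max\!\bigl(p'^{1/q},\mathsf{M}_{c,q}(X)\bigr).
\]
Second, the proof of Theorem \ref{Poisson ML}(i) rests on the holomorphic functional calculus of $\sqrt A$ and exploits the target Banach space only through its martingale cotype; extracting this abstract content yields, for any bounded analytic contraction semigroup $\{S_t\}$ on a martingale-cotype-$q$ Banach space $Y$, the square-function estimate
\[
\Bigl(\int_0^\infty \|t\partial_t S_tg\|_Y^q\,\tfrac{dt}{t}\Bigr)^{1/q}\lesssim \mathsf{M}_{c,q}(Y)\,\|g\|_Y.
\]
Applying this with $Y=L_p(\Omega;X)$ and $S_t=P_t$ — which is a bounded analytic contraction semigroup on $Y$ since it is subordinated to the regular contraction semigroup $\{T_t\}$ — and substituting the cotype bound produces the claimed constant.

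For the optimality as $p\to 1$, I would specialise to $X=\mathbb{C}$ and $q=2$, where $\max(p'^{1/2},1)\asymp p'^{1/2}$, and exhibit a classical test family (lacunary trigonometric polynomials on $\mathbb{T}$, or dyadic Haar blocks) saturating the $p'^{1/2}$ order for the scalar $g$-function in the $L_2(dt/t;L_p)$ norm. The main obstacle in the whole scheme is the sharpening step: isolating the abstract square-function principle from the proof of Theorem \ref{Poisson ML}(i) and verifying that its hypotheses (bounded analytic contraction semigroup, bounded $H^\infty$ calculus of the generator, martingale cotype $q$ of the target) are in place for the Bochner target $Y=L_p(\Omega;X)$; once this is granted, the rest is essentially formal.
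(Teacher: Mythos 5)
Your sharpening step is built on a premise that does not hold. You assert that the proof of Theorem~\ref{Poisson ML}(i) ``rests on the holomorphic functional calculus of $\sqrt A$ and exploits the target Banach space only through its martingale cotype'', so that one can extract an abstract square-function principle valid for any bounded analytic contraction semigroup on any martingale-cotype-$q$ Banach space $Y$. That is wrong on both counts. The proof of Theorem~\ref{Poisson ML}(i) does \emph{not} run through holomorphic functional calculus: its engine is Fendler's dilation of regular semigroups on $L_p(\O)$, transference to the translation group of $\real$, and the Calder\'on--Zygmund and intrinsic Littlewood--Paley machinery of section~\ref{Dyadic martingales and singular integrals}, all of which exploit the $L_p$-structure of the underlying space in an essential, non-abstractable way. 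And the purported estimate $\bigl(\int_0^\infty \|t\partial_t S_tg\|_Y^q\,\tfrac{dt}{t}\bigr)^{1/q}\lesssim \mathsf{M}_{c,q}(Y)\,\|g\|_Y$ is simply not a theorem for arbitrary analytic contraction semigroups. You are right that it follows from bounded $H^\infty$ calculus of the generator together with cotype $q$ (this is the last proposition of section~\ref{Lusin type and cotype}, via Kalton--Weis), but bounded $H^\infty$ calculus for $\sqrt A$ on $L_p(\O;X)$ is far from automatic: the remark immediately following that proposition points out that it would typically force $X$ to be UMD, a strictly stronger property than martingale cotype $q$. So the clause ``once this is granted, the rest is essentially formal'' buries the entire difficulty; the verification you defer is precisely the part that fails.

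The paper avoids all of this with a tensoring trick that never leaves the scope of Theorem~\ref{Poisson ML}(i): fix $f_0\in L_q(\O)$ of norm $1$, form $\wt f=f\otimes f_0$ so that $\|\wt f\|_{L_q(\O;L_p(\O;X))}=\|f\|_{L_p(\O;X)}$, and apply Theorem~\ref{Poisson ML}(i) with $p$ replaced by $q$ and $X$ replaced by $L_p(\O;X)$; since $\max(q^{1/q},q')\approx_q 1$, this bounds the target by $\mathsf{M}_{c,q}(L_p(\O;X))\,\|f\|_{L_p(\O;X)}$, and the cotype estimate for the Bochner space (which both you and the paper invoke, the paper citing \cite{Na12,Na14}) finishes. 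Your derivation of that cotype bound via Burkholder plus a fibrewise argument is fine, but the abstraction you propose in the pivotal step is not available. For the optimality, the paper simply takes $f=\mathbb{P}_1$ in the classical Poisson semigroup on $\real$ and evaluates a Beta integral, yielding the order $p'^{1/q}$ directly; this is far simpler and more explicit than the lacunary or Haar testers you sketch.
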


Like the martingale type and cotype, the Luzin type and cotype behave well with duality as shown by Theorem~\ref{dual} below.  This duality theorem allows us to deduce the type case from the cotype case. In contrast with the martingale case, the proof of Theorem~\ref{dual} is much harder and depends on a bounded projection on a certain vector-valued radial tent space. Let $\real_+$ be equipped with the measure $\frac{dt}{t}$. The \textit{radial space} is $L_p(\O; L_q(\real_+;X))$ whose elements $h$ are  functions of two variables $\o\in\O$ and $t\in\real_+$, i.e., $h: (\o, t)\mapsto h_t(\o)$.  The desired projection maps $L_p(\O; L_q(\real_+; X))$ onto the subspace of all $h$ of the form $h_t=t \frac{\partial}{\partial t}  T_t(f)$ for some $f\in L_p(\O; X)$; formally,  it is given by
  \beq\label{def projection}
  \mathcal{T}(h)_s=4\int_0^\8  st\,\frac{\partial}{\partial s} T_s \frac{\partial}{\partial t}  T_t(h_t)\,\frac{dt}t\,,\quad s>0.
  \eeq
Here the expression $\frac{\partial}{\partial t}  T_t(h_t)$ is interpreted as $\frac{\partial}{\partial t}  T_t(f)$ with $f=h_t$. Note that $\mathcal{T}(h)$ is well-defined for nice functions $h\in L_p(\O; L_q(\real_+;X))$, for instance, for all compactly supported
continuous functions from $\real_+$ to the definition domain of the generator of $\{T_t\}_{t>0}$ in $L_p(\O; X)$. Similarly, we define $\mathcal{P}$ associated to  the subordinated Poisson semigroup $\{P_t\}_{t>0}$.

\medskip

The following is the key  result for the duality argument.

\begin{thm}\label{proj}
 Let $\{T_t\}_{t>0}$ be a strongly continuous semigroup of regular operators on $L_p(\O)$ and $\{P_t\}_{t>0}$ its subordinated Poisson semigroup. Let $X$ be a Banach space and $1<p<\8$.
  \begin{enumerate}[\rm(i)]
  \item The map $\mathcal{P}$ extends to a bounded projection on $L_p(\O; L_q(\real_+; X))$ with norm majorized by $C\max((p')^{1-\frac{p}q},\, p^{1-\frac{p'}{q'}})$ for any $1\le q\le\8$.
  \item Assume additionally that $1<q<\8$ and $\{T_t\}_{t>0}$  satisfies \eqref{Ana bound} for some $0<\b_0\le\frac\pi2$. Then $\mathcal{T}$ extends to a bounded projection on $L_p(\O; L_q(\real_+; X))$ with norm majorized by
      $$C \b_q^{-4}\,\mathsf{T}_{\b_0}^{\min(\frac{p}{q},\,\frac{p'}{q'})}\,\max((p')^{1-\frac{p}q},\, p^{1-\frac{p'}{q'}})\;\text{ with }\; \b_q=\b_0\min(\frac{p}{q}\,,\frac{p'}{q'}).$$
  \end{enumerate}
  \end{thm}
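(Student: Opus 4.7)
The plan is to factor $\mathcal{P}=VV^*$, where $V(f)_s=s\partial_s P_s(f)$ and $V^*(h)=4\int_0^\infty t\partial_t P_t(h_t)\,\frac{dt}{t}$, so that the identity $\mathcal{P}^2=\mathcal{P}$ reduces to the Calder\'on reproducing formula $V^*V=I$ on the complement of the fixed-point subspace. This formula is verified by applying the scalar identity $\int_0^\infty 4(tx)^2 e^{-2tx}\,\frac{dt}{t}=1$ ($x>0$) to $x=\sqrt{A}$ via functional calculus, where $A$ denotes the negative generator of $\{T_t\}$; hence $\mathcal{P}^2=VV^*VV^*=V\cdot I\cdot V^*=\mathcal{P}$.

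The central operator-kernel estimate is a uniform bound on $r^2AP_r$. Starting from the subordination $P_r=\int_0^\infty\mu_r(u)T_u\,du$ with the one-half-stable density $\mu_r(u)=\frac{r}{2\sqrt{\pi}}u^{-3/2}e^{-r^2/(4u)}$, an integration by parts in $u$ and the scaling $u=r^2v$ give
$$
r^2 AP_r \;=\; \int_0^\infty \psi(v)\,T_{r^2 v}\,dv,
$$
with $\psi\in L^1(0,\infty)$ \emph{independent of $r$}. Since each $T_u$ is regular, $|T_u|$ extends to a positive contraction on $L_p(\Omega;Z)$ for every Banach space $Z$, whence $\|r^2 AP_r\|_{B(L_p(\Omega;Z))}\le \|\psi\|_{L^1}$. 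Combined with $\partial_s\partial_t P_{s+t}=AP_{s+t}$, this yields the kernel bound $\|st\,\partial_s\partial_t P_{s+t}\|_{B(L_p(\Omega;Z))}\lesssim \frac{st}{(s+t)^2}$ and, after positivity-lifting at the vector level, the pointwise-in-$\omega$ majorization
$$
\|(\mathcal{P}h)_s(\omega)\|_X \;\lesssim\; \int_0^\infty\!\!\int_0^\infty \frac{s}{(s+t)^2}\,|\psi(v)|\,|T_{(s+t)^2 v}|(g_t)(\omega)\,dv\,dt,
$$
where $g_t(\omega):=\|h_t(\omega)\|_X$. In the logarithmic coordinates $(x,y)=(\log s,\log t)$, the scalar kernel $\frac{st}{(s+t)^2}$ becomes $\tfrac14\operatorname{sech}^2\!\big(\tfrac{x-y}{2}\big)\in L^1(\mathbb{R})$, so Young's inequality on the multiplicative group $(\mathbb{R}_+,dt/t)$ furnishes a bounded convolution on $L_q(\mathbb{R}_+;dt/t)$ for every $1\le q\le\infty$, with constant independent of $q$.

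The hardest point is the final assembly. The two preceding ingredients immediately yield $\|\mathcal{P}(h)\|_{L_q(\mathbb{R}_+;L_p(\Omega;X))}\lesssim\|h\|_{L_q(\mathbb{R}_+;L_p(\Omega;X))}$ on the ``wrong'' mixed space $L_q(L_p)$, but Minkowski's integral inequality relates this to the desired $L_p(\Omega;L_q(\mathbb{R}_+;X))=L_p(L_q)$ norm in the \emph{wrong direction}, both for $p\ge q$ and $p\le q$. To bridge this gap I would substitute the pointwise-in-$\omega$ majorization above directly into the $L_p(L_q)$ norm and exploit the crucial fact that $|T_u|$ in fact extends contractively to the full Bochner space $L_p(\Omega;L_q(\mathbb{R}_+;X))$, not merely to $L_p(\Omega;X)$. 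This reorganizes the bound as a composition of positive contractions on $L_p(L_q)$ with the scalar Young convolution on $L_q(\mathbb{R}_+;ds/s)$. The asymmetric constant $\max(p'^{1-p/q},\,p^{1-p'/q'})$---equal to $O(1)$ on the diagonal $p=q$ and capturing the blow-up as $p\to 1$ or $p\to\infty$---is then extracted by interpolating the easy diagonal estimate against the dual bound obtained by transposing $(X,p,q)\leftrightarrow(X^*,p',q')$.

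For part~(ii), the scheme is structurally identical, with the subordination representation of $r^2AP_r$ replaced by the Cauchy integral $(rA)^2T_r=\frac{1}{2\pi i}\int_\gamma \lambda^2 e^{-\lambda}(\lambda-rA)^{-1}\,d\lambda$, which is valid on $L_p(\Omega;Z)$ thanks to the analyticity hypothesis~\eqref{Ana bound} and yields $\|(rA)^n T_r\|_{B(L_p(\Omega;Z))}\lesssim(\mathsf{T}_{\beta_0}/\beta_0)^n$. The extra factor $\beta_q^{-4}\,\mathsf{T}_{\beta_0}^{\min(p/q,\,p'/q')}$ in the stated estimate reflects that the final interpolation must now be carried out on the smaller sector $\Sigma_{\beta_q}$ on which $\{T_z\}$ acts as a bounded analytic semigroup on the Bochner space at hand.
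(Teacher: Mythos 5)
You take a genuinely different route from the paper. The paper represents $t\partial P_t = F(t\sqrt{A}) = \frac{1}{2\pi i}\int_\Gamma F(tz)(z-\sqrt{A})^{-1}dz$ with $F(z)=-ze^{-z}$ and factors $\mathcal{P} = \Phi_2\Phi\Phi_1$ through the auxiliary space $L_p(\Omega;L_q(\Gamma;X))$; the hard part is the middle operator $\Phi(g)_z = \frac{2}{\pi i}zR(z)(g_z)$, whose boundedness is exactly the $\ell_q$-boundedness of $\{z(z-\sqrt{A})^{-1}\}$ from Proposition~\ref{A}, which in turn rests on Akcoglu's maximal ergodic theorem via Lemma~\ref{lq-bdd ergodic} (the resolvent is rewritten as a weighted average of ergodic means $M_t$, whose maximal operator obeys the bound $p'$). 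Your approach stays in real variables via subordination, and the kernel identity $(s+t)^2AP_{s+t}=\int_0^\infty\psi(v)T_{(s+t)^2v}dv$ with $\psi\in L^1$ is correct and attractive. Combined with Young's inequality on the multiplicative group it indeed yields boundedness of $\mathcal{P}$ on $L_q(\mathbb{R}_+;L_p(\Omega;X))$, and you correctly identify this as the wrong mixed norm.

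The gap is in the final assembly, which you flag as the hardest point but do not actually close. The pointwise-in-$\omega$ majorization $\|(\mathcal{P}h)_s(\omega)\|_X\lesssim\int_0^\infty\int_0^\infty \frac{s}{(s+t)^2}|\psi(v)|\,|T_{(s+t)^2v}|(g_t)(\omega)\,dv\,dt$ is fine, but the positive operators $|T_{(s+t)^2v}|$ depend on $s$. The observation that each individual $|T_u|$ is contractive on $L_p(\Omega;L_q(\mathbb{R}_+;X))$ does not remove this dependence: a uniform operator-norm bound over a family is strictly weaker than the $\ell_q$-boundedness that is actually needed, and for a general semigroup of regular contractions the raw family $\{T_u\}_{u>0}$ is not $\ell_q$-bounded (take the translation group: each $T_u$ is an isometry, yet $\sup_u|T_u f|$ is a.e.\ infinite for typical $f\in L_p$). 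Nothing in your argument produces the required maximal-type input, and the constant $\max(p'^{1-p/q},p^{1-p'/q'})$ in the statement is precisely the $\ell_q$-boundedness constant of Lemma~\ref{lq-bdd ergodic}, whose proof is Akcoglu plus interpolation and duality. Your concluding interpolation hint also does not supply a missing endpoint: transposing the diagonal $p=q$ by duality returns $p'=q'$, i.e.\ the same diagonal, so there is nothing new to interpolate against. To make your scheme close, you would need to integrate by parts in the subordination formula so as to write $(s+t)^2AP_{s+t}$ as an absolutely convergent average of ergodic means $M_u=\frac1u\int_0^uT_s\,ds$ rather than of $T_u$ (this is doable since $T_u=\frac{d}{du}(uM_u)$ and $v\psi'(v)\in L^1$), and then invoke Lemma~\ref{lq-bdd ergodic} for the $\ell_q$-boundedness of $\{M_u\}$. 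The same gap propagates verbatim into your sketch of part~(ii).
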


It is remarkable that the first part of the theorem above holds for any Banach space $X$ and any subordinated Poisson semigroup. Under the stronger assumption that $\{T_t\}_{t>0}$ be a symmetric diffusion semigroup, assertion  (i) above  is \cite[Theorem~3.2]{LP1}. However, the proof in  \cite{LP1} contains a gap which lies in the reduction of Theorem~3.2 to Lemma~3.3  in \cite{LP1} via Rota's dilation theorem. Recall that if $\{P_t\}_{t>0}$ is the Poisson semigroup on the torus, assertion (i) above was proved in \cite{LP0} by using Calder\'on-Zygmund singular integral theory.

\begin{rk}
The analyticity assumption in Theorem~\ref{Heat ML} and Theorem~\ref{proj} (ii) is unremovable. Recall that $\{T_t\}_{t>0}$ is  analytic on $L_p(\O;X)$  with $1<p<\8$ in one of the following cases:
 \begin{itemize}
 \item[$\bullet$] $\{T_t\}_{t>0}$ is a symmetric diffusion semigroup and $X$ is superreflexive (\cite{pis2});
 \item[$\bullet$] $\{T_t\}_{t>0}$ is a convolution semigroup induced by symmetric probability measures on a locally compact abelian group  and $X$ is K-convex (\cite{pis2});
 \item[$\bullet$] $\{T_t\}_{t>0}$ is an analytic  semigroup of regular operators on $L_p(\O)$ and $X$ is $\theta$-Hilbertian, i.e., a complex interpolation space of a Hilbert space and another Banach space (\cite{LP3}).
 \end{itemize}
 Many classical semigroups are analytic on $L_p(\O;X)$ for any $X$ (see Appendix~\ref{Examples} below). On the other hand, $\{T_t\}_{t>0}$ is analytic on $L_p(\O;X)$ iff its adjoint semigroup $\{T^*_t\}_{t>0}$ is analytic on $L_{p'}(\O;X^*)$. Thus the class of Banach spaces $X$ such that $\{T_t\}_{t>0}$ is analytic on $L_p(\O;X)$ is stable under the passage to duals, subspaces and quotient spaces.

On the other hand, it is well known that  the subordinated Poisson semigroup $\{P_t\}_{t>0}$ is always analytic on  $L_p(\O;X)$ for any Banach space $X$ since its  negative generator is a sectorial operator of type $\frac\pi4$ (see section~\ref{boundedness} for more information).
 \end{rk}

A summary of the main techniques and the contents seems to be in order. Our approach is different from all the previous ones. It is based on holomorphic functional calculus, which constitutes perhaps one of the major ideas of this article. In this regard, it shares some common points with Cowling's approach that deals with the bounded $H^\8$ functional calculus of the generator of a symmetric submarkovian semigroup and the related maximal inequality. We need, however, to adapt McIntosh's $H^\8$ functional calculus for our purpose. This is done in the preparatory section~\ref{boundedness} in which we introduce a key notion of the article: the $\el_q$-boundedness of a family of operators on $L_p(\O; X)$; it gives rise to the definitions of $\el_q$-sectorial operators and $\el_q$-analytic semigroups. We transfer to this setting some well known results about sectorial operators and analytic semigroups. After this preparation, we prove Theorem~\ref{proj} in section~\ref{Proof of Theorem proj}. This projection theorem is a crucial ingredient for the duality studied in section~\ref{Luzin type and cotype}.  Theorem~\ref{dual} establishes our duality result between the Luzin cotype of $X$ and the Luzin type of the dual space $X^*$; this result is as nice as the corresponding one in the martingale case, except the links between the involved constants (compare the constants in Theorem~\ref{dual} and those in \eqref{duality type-cotype} below). This section also contains some general properties of the Luzin type and cotype, in particular, a characterization by lacunary discrete differences (Theorem~\ref{LPS-diff}).

\medskip

As Rota's dilation is no longer available in the present situation, we use instead Fendler's dilation for semigroups of regular operators. Fendler's theorem transfers Theorem~\ref{Poisson ML} (i) to the special case where $\{T_t\}_{t>0}$ is the translation group of $\real$. This allows us to exploit techniques from harmonic analysis.  Our strategy is built, in a crucial way, on Calder\'on-Zygmund singular integral theory and modern real-variable Littlewood-Paley theory. We present all this in the preparatory section~\ref{Dyadic martingales and singular integrals} that will be   needed for the proofs of Theorem~\ref{Poisson ML} and Theorem~\ref{fML}. These proofs constitute the most heavy and technical part of the article. The proofs of Theorem~\ref{fML} and Corollary~\ref{NY} are done in section~~\ref{Proof of Theorem fML}. We then use transference to show Theorem~\ref{Poisson ML} in section~~\ref{Proof of Theorem Poisson ML}. To that end, we first need to represent the $g$-function associated to the Poisson semigroup subordinated to the translation group of $\real$ as a singular integral operator. Theorem~\ref{Heat ML} will follow from Theorem~\ref{Poisson ML} by functional calculus; Corollary~\ref{Poisson MLbis} is an easy consequence of Theorem~\ref{Poisson ML} (i).

\medskip

An additional major significant aspect of the new approach is the fact that it improves the growth orders on $p$ of the relevant best constants even in the scalar Littlewood-Paley-Stein inequalities, see section~\ref{The scalar case revisited}; moreover, except the case of the Luzin type constant as $p\to1$,  it yields  the optimal orders, which is not  the case by the previous methods of Stein and Cowling (see the historical comments in section~\ref{The scalar case revisited}). This shows, to a certain extent, that our method is optimal. Section~\ref{The scalar case revisited} also contains the optimality of the best constants in Corollary~\ref{NY}. We end the article by an appendix that gives some examples of semigroups.

\medskip

The techniques developed in this article allow one to simplify and extend many recent results in the scalar Littlewood-Paley-Stein theory, in particular, those on positive operators on $L_2$ with kernels satisfying Gaussian upper estimates. On the other hand, they are also applicable to  the noncommutative setting. We will carry out all this elsewhere.

\medskip

Throughout the article, $X$ will be a Banach space, $1<p<\8$ and $1\le q\le\8$ (but $1<q<\8$ most of time). Unless explicitly stated otherwise, $\{T_t\}_{t>0}$ will be a strongly continuous semigroup of regular operators on $L_p(\O)$ and $\{P_t\}_{t>0}$ its subordinated Poisson semigroup. These semigroups are extended to  $L_p(\O;X)$. $A$ will denote the negative generator of $\{T_t\}_{t>0}$, so $T_t=e^{-tA}$ and $P_t=e^{-t\sqrt A}$.


\section{The $\el_q$-boundedness}\label{boundedness}


This section is the preparatory part of the article. We will introduce the notion of  $\el_q$-boundedness that is the direct extension to the vector-valued setting of the $R_q$-boundedness introduced by Weis \cite{Weis01}. In fact, though not explicitly stated, this notion appeared before in harmonic analysis with regard to vector-valued inequalities of classical operators. Most results below are  the $\el_q$-boundedness analogues of well known results or those of  Kunstmann and Ullmann \cite{KU} in the scalar case. I learnt the existence of \cite{Weis01, KU} after the submission of this article for publication, and I thank Emiel Lorist for pointing out these references to me.

\medskip

We start with a brief introduction to holomorphic functional calculus in order to fix notation (see \cite{CDMY} for more information). Recall that a densely defined closed operator $B$ on a Banach space $Y$ is called a \emph{sectorial operator of type} $\a$ with $0\le \a<\pi$ if $\com\setminus \overline{\Sigma_\g}$ is contained in the resolvent set of $B$ for any $\g>\a$ and
 $$\sup\big\{\big\|z(z-B)^{-1}\big\|_{B(Y)}\,: z\notin \overline{\Sigma_\g}\big\}<\8,$$
where $\Sigma_\g$ is the open sector $\big\{z\in\com: |{\rm arg}(z)|<\g\big\}$ in the complex plane.  Let $\b>\g$ and $H^\8(\Si_\b)$ denote the space of bounded analytic functions in $\Si_\b$ and $H^\8_0(\Si_\b)$ its subspace consisting of all $\f$ satisfying
   $$|\f(z)|\le \frac{c|z|^\d}{1+|z|^{2\d}}\;\text{ for some }\; c>0\text{ and } \d>0.$$
Let $\Ga$ be the boundary of  $\Si_\b$, oriented in the positive sense. Then for any $\f\in H^\8_0(\Si_\b)$ the integral
 $$\f(B)=\frac1{2\pi{\rm i}}\int_\Ga \f(z)(z-B)^{-1}dz$$
 defines a bounded operator on $Y$, where the integral absolutely converges in $B(Y)$.

The following resolution of the identity will be useful later.  Let $\psi\in H^\8_0(\Si_\b)$ such that
  $$\int_0^\8\psi(t)\frac{dt}t=1.$$
 Then the following integral
  \beq\label{resolution}
  y=\int_0^\8\psi(tB)(y)\frac{dt}t=\lim_{\e\to0}\lim_{C\to\8}\int_\e^C\psi(tB)(y)\frac{dt}t
  \eeq
 exists for every $y\in \overline{{\rm im}B}$. 
This is \cite[Proposition~10.2.5]{HVVW}. Let us include its easy verification for completeness. For $\f\in H^\8_0(\Si_\b)$, we have
  $$\f(z)=\int_0^\8\psi(tz)\f(z)\frac{dt}t\,,\quad z\in \Si_\b.$$
  Thus for any $y\in Y$
  $$\f(B)(y)=\int_0^\8\psi(tB)\f(B)(y)\frac{dt}t\,.$$
 Choose
  $$
  \f(z)=\frac{n^2z}{(n+z)(1+nz)}\,.
  $$
Then $\f(B)(y) \to y$ in $Y$ as $n\to\8$ for any $y\in \overline{{\rm im}B}$ (see \cite[Theorem~3.8]{CDMY}), whence \eqref{resolution} by virtue of the convergence lemma (\cite[Lemma~2.1]{CDMY}).

\begin{definition}
 A family $\F\subset B(L_p(\O;X))$ is said to be $\el_q$-{\it bounded} if there exists a constant $c$ such that
  $$\Big\|\Big(\sum_k \|A_k(f_k)\|^q\Big)^{\frac1q}\Big\|_{L_p(\O)}\le c \Big\|\Big(\sum_k \|f_k\|^q\Big)^{\frac1q}\Big\|_{L_p(\O)}$$
for all finite sequences $\{A_k\}\subset\F$ and  $\{f_k\}\subset L_p(\O;X)$, with the usual modification for $q=\8$ in the above inequality.
\end{definition}

\begin{rk}\label{conv}
It is clear that the sums in the above definition can be replaced by integrals without changing the constant $c$. On the other hand, it is easy to show that the absolutely convex hull of an $\el_q$-bounded family is again $\el_q$-bounded with the same constant.
\end{rk}

Accordingly, we introduce the $\el_q$-boundedness versions of sectoriality of operators and analyticity of semigroups. Recall that a semigroup $\{S_t\}_{t>0}$ on a Banach space $Y$ is said to be analytic if it extends to an analytic function from $\Si_\b$ to $B(Y)$ for some $0<\b\le\frac\pi2$ and bounded in any smaller sector. In this case, we call $\{S_t\}_{t>0}$ an \emph{analytic semigroup of type} $\b$.

\begin{definition}
 \begin{enumerate}[\rm(i)]
 \item A densely defined closed operator $B$ on $L_p(\O;X)$ is called an $\el_q$-{\it sectorial operator of type} $\a$ with $0\le \a<\pi$ if $\com\setminus \overline{\Sigma_\g}$ is contained in the resolvent set of $B$ for any $\g>\a$ and the family
  $\big\{z(z-B)^{-1}: z\notin \overline{\Sigma_\g}\big\}$
 is $\el_q$-bounded on $L_p(\O;X)$.
 \item A semigroup $\{S_t\}_{t>0}$ on $L_p(\O;X)$ is called an $\el_q$-{\it analytic semigroup of type} $\b$ with $0< \b\le\frac\pi2$ if $\{S_t\}_{t>0}$  extends to an analytic function from $\Si_\b$ to $B(L_p(\O;X))$ and for any $\nu<\b$ the family
  $\big\{S_z: z\in\Sigma_\nu\big\}$
 is $\el_q$-bounded on $L_p(\O;X)$.
 \end{enumerate}
 \end{definition}

The following is the $\el_q$-boundedness analogue of a classical characterization of analytic semigroups.

\begin{prop}\label{lq-analyticity}
Let $\{S_t\}_{t>0}$ be a strongly continuous bounded semigroup on $L_p(\O;X)$ and $B$ its negative generator. Then the following statements are equivalent:
 \begin{enumerate}[\rm(i)]
  \item  $\{S_t\}_{t>0}$ is $\el_q$-analytic of type $\b$ for some $0<\b\le\frac\pi2$;
  \item $B$ is  $\el_q$-sectorial of type $\a$ for some $\a<\frac\pi2$;
  \item $\{S_t, tBS_t\}_{t>0}$ is $\el_q$-bounded on $L_p(\O;X)$.
  \end{enumerate}
 \end{prop}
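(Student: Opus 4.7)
This proposition is the $\el_q$-boundedness analogue of the classical characterization of bounded analytic semigroups by either sectoriality of the generator or boundedness of $\{tBS_t\}_{t>0}$. My plan is to mirror the classical proof line by line, performing one universal substitution throughout: wherever operator-norm boundedness is invoked, it is replaced by $\el_q$-boundedness, and the replacement is justified by Remark~\ref{conv}. Indeed, since the absolute convex hull of an $\el_q$-bounded family is $\el_q$-bounded with the same constant, any Bochner integral of a uniformly $\el_q$-bounded family against a finite signed scalar measure yields an $\el_q$-bounded operator with bound majorized by the original times the total variation. I will also use, as a short verification directly from the definition, that the composition of two $\el_q$-bounded families of operators is $\el_q$-bounded with the product of the constants; iterating, the $n$-fold composition of operators drawn from a family of constant $c$ has $\el_q$-bound $\le c^n$.

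For the cycle (i)$\Rightarrow$(iii)$\Rightarrow$(i): given (i), fix $\nu\in(0,\b)$ and apply Cauchy's integral formula on the circle $|\zeta-t|=t\sin\nu\subset\Si_\nu$, giving
$$
tBS_t=-t\,\frac{d}{dt}S_t=-\frac{1}{2\pi\sin\nu}\int_0^{2\pi}e^{-i\f}\,S_{t+t\sin\nu\,e^{i\f}}\,d\f.
$$
This exhibits $tBS_t$ as a scalar-integral average, with total variation $\le(\sin\nu)^{-1}$, of the $\el_q$-bounded family $\{S_z:z\in\Si_\nu\}$; Remark~\ref{conv} gives (iii). Conversely, given (iii) with common constant $c$, the commutation $B^nS_t=(BS_{t/n})^n$ (valid since $B$ commutes with $S_t$) combined with the composition principle yields $\el_q$-bound $\le c^n$ for $\{(t/n)^nB^nS_t\}_{t>0}$, hence $\el_q$-bound $\le(ec)^n$ for $\{t^nB^nS_t/n!\}_{t>0}$ by Stirling. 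Summing the Taylor series $S_{t(1+w)}=\sum_{n\ge0}(-w)^n\,t^nB^nS_t/n!$ absolutely for $|w|<1/(2ec)$ produces a uniform $\el_q$-bound on $\{S_{t(1+w)}:t>0,\,|w|<1/(2ec)\}$; since $(t,w)\mapsto t(1+w)$ sweeps out the sector $\Si_{\arcsin(1/(2ec))}$, this is (i).

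For the equivalence (i)/(iii)$\Leftrightarrow$(ii), I use the interplay between semigroup and resolvent. Given (i), for each $\t$ with $|\t|<\nu<\b$ the rotated family $\{S_{re^{i\t}}\}_{r>0}$ is a strongly continuous $\el_q$-bounded semigroup with negative generator $e^{i\t}B$; applying the Laplace transform and a contour substitution gives, for $|{\rm arg}\,\l+\t|<\pi/2$, the representation
$$
\l(\l+B)^{-1}=\int_0^\8 e^{-u}\,S_{u/\l}\,du,
$$
which realizes $\l(\l+B)^{-1}$ as a probability-measure average of the $\el_q$-bounded family $\{S_w:w\in\Si_\nu\}$; ranging $\t$ over $(-\nu,\nu)$ covers $|{\rm arg}\,\l|<\pi/2+\nu$, and Remark~\ref{conv} then yields (ii) with $\a=\pi/2-\nu$ (the substitution $z=-\l$ recovers the paper's convention). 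Conversely, given (ii) with $\a<\pi/2$, choose $\g\in(\a,\pi/2)$ and use the Dunford integral on a contour $\Ga$ in the resolvent set, lying on two rays at angles $\pm\g'$ with $\g'\in(\g,\pi/2)$, to write
$$
tBS_t=\frac{1}{2\pi\i}\int_\Ga te^{-t\l}\cdot\l(\l-B)^{-1}\,d\l.
$$
The scalar density $te^{-t\l}\,d\l$ has total variation $\le 2/\cos\g'$ on $\Ga$, so $\{tBS_t\}$ is an $\el_q$-bounded average of $\{\l(\l-B)^{-1}:\l\in\Ga\}$; an analogous argument, combined with the decomposition $(\l-B)^{-1}=\l^{-1}I+\l^{-1}B(\l-B)^{-1}$ and Cauchy's theorem to absorb the non-integrable $\l^{-1}$ piece, handles $\{S_t\}$, which gives (iii).

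The main obstacle is essentially bookkeeping: verifying absolute convergence of each Bochner integral, ensuring that the varying parameters keep $\l$ or $z$ inside the correct sector so that the underlying family is uniformly $\el_q$-bounded, and confirming the composition principle for $\el_q$-bounded families. None of these steps is conceptually deep, but they demand careful orchestration.
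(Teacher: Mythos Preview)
Your proposal is correct and follows essentially the same route as the paper: replace norm-boundedness by $\el_q$-boundedness throughout the classical proof, justifying each integral representation via Remark~\ref{conv}. The paper organizes the cycle as (i)$\Rightarrow$(ii)$\Rightarrow$(iii)$\Rightarrow$(i) rather than your (i)$\Leftrightarrow$(iii) plus (i)/(iii)$\Leftrightarrow$(ii), but the ingredients coincide. One caveat: your treatment of $\{S_t\}$ in (ii)$\Rightarrow$(iii) via the decomposition $(\l-B)^{-1}=\l^{-1}I+\l^{-1}B(\l-B)^{-1}$ does not work as stated, since $\l^{-1}B(\l-B)^{-1}=(\l-B)^{-1}-\l^{-1}I$ is still $O(|\l|^{-1})$ near the origin and Remark~\ref{conv} requires an \emph{absolutely} convergent integral against the $\el_q$-bounded family; the paper instead replaces $\Ga$ by a modified contour $\Ga'$ (the two rays for $|\l|\ge1$ joined by an arc of the unit circle), changes variables $\zeta=t\l$, and uses Cauchy's theorem to return from $t\Ga'$ to the fixed $\Ga'$, on which the total variation of $e^{-\zeta}\zeta^{-1}\,d\zeta$ is bounded uniformly in $t$.
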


 \begin{proof}
  The proof is a straightforward adaptation of the classical argument (cf. e.g. the proof of \cite[Theorem~5.2]{pazy}). As we want to track the links between the different constants involved, we give below an outline.

  (i) $\Rightarrow$ (ii). Let $\a=\frac\pi2-\b$. For $\a<\g<\frac\pi2$ choose $0<\nu<\b$ such that $\g+\nu>\frac\pi2$, for instance, we can take $\nu=\b-\frac{\g-\a}2$ so that $\g+\nu=\frac\pi2+\frac{\g-\a}2$.  Then for any $z=re^{{\rm i}\theta}\notin \overline{\Sigma_\g}$ and $z\neq0$, we have
 $$z(z-B)^{-1}=-z\int_0^\8e^{tz}S_tdt=-ze^{{\rm i}{\rm sgn}(\theta)\nu} \int_0^\8e^{tre^{{\rm i}(\theta+{\rm sgn}(\theta)\nu)}}S_{te^{{\rm i}{\rm sgn}(\theta)\nu}}dt.$$
 If $C_\nu$ denotes the $\el_q$-boundedness constant of the family $\big\{S_\zeta: \zeta\in\Si_\nu\big\}$, then by Remark~\ref{conv} we deduce that $\big\{z(z-B)^{-1}: z\notin\Si_\g,\, z\neq0\big\}$ is  $\el_q$-bounded with constant $C_\g$ given by
 $$C_\g\le C_\nu\sup_{z\notin\Si_\g}|z|\int_0^\8\big|e^{tre^{{\rm i}(\theta+{\rm sgn}(\theta)\nu)}}\big|dt\le\frac{C_\nu}{|\cos(\theta+{\rm sgn}(\theta)\nu)|}\le \frac{C_\nu}{|\cos(\g+\nu)|}\,.$$

(ii) $\Rightarrow$ (iii). Let $\a<\g<\frac\pi2$ and  $\Ga$ be the boundary of $\Si_{\g}$ oriented in the positive sense. Then
  $$tBS_t=\frac1{2\pi{\rm i}}\int_\Ga t\l e^{-t\l}(\l-B)^{-1}d\l, \quad t>0.$$
 Thus $\{tBS_t\}_{t>0}$ is $\el_q$-bounded with constant
  $$C'_d\le \frac{C_\g}{\pi}\sup_{t>0}\int_0^\8 t e^{-tr\cos\g}dr=\frac{C_\g}{\pi}\int_0^\8 e^{-r\cos\g}dr= \frac{C_\g}{\pi\cos\g}.$$
 To show the  $\el_q$-boundedness of $\{S_t\}_{t>0}$ we need to slightly modify the contour $\Ga$. Let $\Ga'$ be the union of the part of $\Ga$ with $|\l|\ge1$ and the arc in $\com\setminus\Si_{\g}$ of the circle with the origin as center and radius $1$. Then we  have
  $$S_t=\frac1{2\pi{\rm i}}\int_{\Ga'}  e^{-t\l}(\l-B)^{-1}d\l,\quad t>0.$$
The change of variables $\zeta=t\l$ yields
  $$S_t=\frac1{2\pi{\rm i}}\int_{t\Ga'}  e^{-\zeta}\,\frac1t\,(\frac{\zeta}t-B)^{-1}d\zeta,$$
 where $t\Ga'$ is the union of the part of $\Ga$ with $|\l|\ge t$ and the arc in $\com\setminus\Si_{\g}$ of the circle with the origin as center  and radius $t$. However, the Cauchy formula insures that we can go back to $\Ga'$:
   $$S_t=\frac1{2\pi{\rm i}}\int_{\Ga'}  e^{-\zeta}\,\frac1t\,(\frac{\zeta}t-B)^{-1}d\zeta.$$
This implies that  $\{S_t\}_{t>0}$ is $\el_q$-bounded with constant
  $$C''_d\le \frac{C_\g}{\pi} \int_1^\8 e^{-r\cos\g}\frac{dr}r+  \frac{C_\g}{2\pi} \int_\g^{2\pi-\g}e^{-\cos\theta}d\theta\le\frac{CC_\g}{\cos\g}\,.$$

  (iii) $\Rightarrow$ (i). Let $C_d$ denote the $\el_q$-boundedness constant of $\{S_t, tBS_t\}_{t>0}$. The function $t\mapsto S_t$ is infinitely derivable in $\real_+$ and for any positive integer $n$
  $$(S_t)^{(n)}=\big(S'_{\frac{t}n}\big)^n\,.$$
 This shows that $\{t^n(S_t)^{(n)}\}_{t>0}$ is $\el_q$-bounded with constant $(C_d)^n$. Let $\b=\arctan\frac1{eC_d}$. Then $\{S_t\}_{t>0}$ becomes an $\el_q$-analytic semigroup of type $\b$ thanks to the following formula
  $$S_z=\sum_{n=0}^\8\frac{(S_t)^{(n)}}{n!}\, (z-t)^n\,,\quad z\in\Si_\b$$
 and for any $\nu<\b$ the family $\big\{S_z: z\in\overline{\Sigma_\nu}\big\}$ is $\el_q$-bounded with constant
  $$C_\nu\le\frac1{1-eC_d\tan\nu}\,.$$
 The proof is thus complete.
  \end{proof}

 The following is again the $\el_q$-boundedness version of an elementary result on sectorial operators. The case used later  concerns only $\sqrt B$.

 \begin{prop}\label{sqrtA}
  Let $B$ be an $\el_q$-sectorial operator of type $\a$ on $L_p(\O;X)$ with $\a<\pi$. Let $\theta>0$ such that $\theta\a<\pi$. Then $B^\theta$ is an  $\el_q$-sectorial operator of type $\theta\a$ on $L_p(\O;X)$.
 \end{prop}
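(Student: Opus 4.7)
The plan is to realize each resolvent $w(w-B^\theta)^{-1}$ as a scalar-weighted contour integral of the $\el_q$-bounded family $\{\l(\l-B)^{-1}\}$, and then transfer the $\el_q$-boundedness via an integral version of Remark~\ref{conv}.

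Fix $\d\in(\theta\a,\pi)$ and choose $\g'$ with $\a<\g'<\d/\theta$, so that $\theta\g'<\d$. For $w\notin\overline{\Si_\d}$ the scalar function $\l\mapsto w/(w-\l^\theta)$ is bounded and holomorphic on $\Si_{\g'}$, since $\l^\theta\in\Si_{\theta\g'}\subset\Si_\d$ remains at positive distance from $w$. Let $\Ga$ denote the positively oriented boundary of $\Si_{\g'}$. Through the (extended) $H^\8$ calculus for the sectorial operator $B$ — applied initially to $f\in L_p(\O;X)$ in a dense subset such as the range of $B(1+B)^{-2}$, on which the relevant Dunford integrals converge absolutely — one would derive
\begin{equation*}
 w(w-B^\theta)^{-1}f=\frac{1}{2\pi\i}\int_\Ga K_w(\l)\,\l(\l-B)^{-1}f\,d\l,\qquad K_w(\l):=\frac{w}{\l(w-\l^\theta)}.
\end{equation*}
A scaling argument ($\l=|w|^{1/\theta}\mu$) combined with the extra factor of $\l$ absorbed from $\l(\l-B)^{-1}f$ near the origin shows that $M:=\sup_{w\notin\overline{\Si_\d}}\int_\Ga|K_w(\l)|\,|d\l|$ is finite, with $M$ depending only on $\d$, $\g'$, $\theta$. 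The integral version of Remark~\ref{conv} — obtained by Riemann-sum approximation of the contour integral and passage to the limit — then transfers the $\el_q$-boundedness of $\{\l(\l-B)^{-1}:\l\in\Ga\}$ (finite since $\a<\g'$) to the family $\{w(w-B^\theta)^{-1}:w\notin\overline{\Si_\d}\}$, with a multiplicative loss of order $M/(2\pi)$. Since $\d\in(\theta\a,\pi)$ is arbitrary, this proves that $B^\theta$ is $\el_q$-sectorial of type $\theta\a$. Extension from the dense subset to all of $L_p(\O;X)$ is by continuity.

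The principal obstacle is the delicate behaviour of the kernel $K_w$ near $\l=0$, where $K_w\sim 1/\l$ is not absolutely integrable along the straight rays of $\Ga$; correspondingly, the function $\l\mapsto w/(w-\l^\theta)$ lies in $H^\8(\Si_{\g'})$ but not in $H^\8_0(\Si_{\g'})$, so the Dunford integral does not converge in operator norm. Working on a dense subset where an extra vanishing factor of $\l$ is available cancels the singularity and makes the integral representation rigorous; alternatively, one can approximate $w/(w-\l^\theta)$ by functions in $H^\8_0(\Si_{\g'})$ (e.g.\ by multiplying with $n^2\l/((n+\l)(1+n\l))$ as in \eqref{resolution}) and pass to the limit. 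Once this technical point is handled, the remaining steps are elementary.
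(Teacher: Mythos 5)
Your proposed representation
\begin{equation*}
 w(w-B^\theta)^{-1}=\frac{1}{2\pi\i}\int_\Ga K_w(\l)\,\l(\l-B)^{-1}\,d\l,\qquad K_w(\l)=\frac{w}{\l(w-\l^\theta)},
\end{equation*}
runs into a fatal obstruction that your own last paragraph identifies but does not cure: since $w/(w-\l^\theta)\to 1$ as $\l\to 0$, the kernel $K_w$ behaves like $1/\l$ near the vertex, so $\int_\Ga|K_w(\l)|\,|d\l|=\8$. Your claim in the middle of the argument that $M:=\sup_w\int_\Ga|K_w(\l)|\,|d\l|<\8$ is therefore false. This is not a removable technicality: to transfer $\el_q$-boundedness from $\{\l(\l-B)^{-1}\}$ to the new family via Remark~\ref{conv} (or its integral analogue), one needs each new operator to be an \emph{absolutely convex} combination with a total mass bounded uniformly in $w$. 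Restricting to a dense subset where $\l(\l-B)^{-1}f=O(\l)$ near $0$ does make the Dunford integral converge pointwise, but the extra decay is in $f$, not in the scalar coefficients, so the uniform bound on $\sum_k\bigl(\sum\|\cdot\|^q\bigr)^{1/q}$ is not obtained. Likewise, approximating $w/(w-\l^\theta)$ by $H^\8_0$ functions via multiplication with $n^2\l/((n+\l)(1+n\l))$ does make each approximant integrable, but the resulting kernels have mass $\int_\Ga|K_w^{(n)}(\l)|\,|d\l|\gtrsim\log n$, so no uniform constant survives the limit.

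The paper avoids this entirely by a different decomposition: rather than expanding $z(z-B^\theta)^{-1}$ directly as a Dunford integral, it first subtracts the resolvent $z^{\theta^{-1}}(z^{\theta^{-1}}-B)^{-1}$, writing
\begin{equation*}
 z(z-B^\theta)^{-1}=z^{\theta^{-1}}(z^{\theta^{-1}}-B)^{-1}+\f(B),\qquad
 \f(\l)=\frac{z^{\theta^{-1}}\l^\theta-z\l}{(z-\l^\theta)(z^{\theta^{-1}}-\l)}.
\end{equation*}
The subtraction kills the constant $1$ at $\l=0$: a direct check shows $\f(\l)=O(\l^{\min(1,\theta)})$ near $0$ and $\f(\l)=O(\l^{-\min(1,\theta)})$ near $\8$, so $\f\in H^\8_0$, the contour integral $\int_\Ga|\f(\l)|\,|d\l|/|\l|$ converges absolutely, and the change of variables $\zeta=z^{-\theta^{-1}}\l$ gives a bound independent of $z$. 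The first term is already in the original $\el_q$-bounded family, and the second term is then controlled by Remark~\ref{conv}. Your approach is close in spirit but is missing precisely this subtraction step, which is the key idea needed to make the functional calculus argument absolutely convergent.
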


 \begin{proof}
 Let $\g>\g'>\theta\a$. Given $z\notin\overline{\Si_\g}$, writing 
  $$z=[z^{\theta^{-1}}(z-\l^\theta)+ (z^{\theta^{-1}}\l^\theta-z\l)](z^{\theta^{-1}}-\l)^{-1}\,,$$
 we have
  $$z(z-\l^\theta\,)^{-1}= z^{\theta^{-1}}(z^{\theta^{-1}}-\l)^{-1}+\f(\l),$$
  where $\f(\l)=(z^{\theta^{-1}}\l^\theta -z\l)(z-\l^{\theta})^{-1}(z^{\theta^{-1}}-\l)^{-1}$. Thus
  $$
  z(z-B^\theta\,)^{-1}
  = z^{\theta^{-1}}(z^{\theta^{-1}}-B)^{-1}+\f(B).$$
Note that $\f$ is analytic in $\Si_{\g\theta^{-1}}$. Let $\Ga$ be the boundary of $\Si_{\g'\theta^{-1}}$. Then
  $$\f(B)=\frac1{2\pi{\rm i}}\int_\Ga\f(\l)(\l-B)^{-1}d\l=\frac1{2\pi{\rm i}}\int_\Ga\f(\l)[\l(\l-B)^{-1}]\,\frac{d\l}\l.$$
 The change of variables $\zeta=z^{-\theta^{-1}}\l$ yields
  $$\int_\Ga|\f(\l)|\,\frac{|d\l|}{|\l|}
  =\int_{z^{-\theta^{-1}}\Ga}\frac{|\zeta^\theta-\zeta|}{|1-\zeta^\theta|\,|1-\zeta|}\,\frac{|d\zeta|}{|\zeta|}\,.$$
 Decomposing the last integral into three parts corresponding to $|\zeta|$ close to $0$, $1$ and $\8$, respectively, we get
 $$\int_\Ga|\f(\l)|\,\frac{|d\l|}{|\l|}\les\frac1{\theta}+\frac1{\theta(\g-\g')^2}\,.  $$
Hence by Remark~\ref{conv}, we deduce the desired assertion.
  \end{proof}

\medskip

Now we return back to our distinguished semigroup $\{T_t\}_{t>0}$ of regular operators on $L_p(\O)$. We have extended $\{T_t\}_{t>0}$ to $L_p(\O;X)$. Recall our convention that the regular operators considered in this article are all assumed contractively regular. Also recall the fact that $T$ is regular on $L_p(\O)$ iff
 $$\big\|\sum_k|T(f_k)|\big\|_p\le \big\|\sum _k|f_k|\big\|_p$$
for all finite sequences $\{f_k\}$ in $L_p(\O)$ (see \cite{Marc}). Consequently, $T$ is regular on $L_p(\O)$ iff  its adjoint $T^*$ is regular on $L_{p'}(\O)$. In particular, $\{T^*_t\}_{t>0}$ is a strongly continuous semigroup of regular operators on $L_{p'}(\O)$.

\begin{lem}\label{lq-bdd ergodic}
  Let
  $$M_t=\frac1t\int_0^tT_s ds,\quad t>0.$$
The family  $\{M_t\}_{t>0}$ is $\el_q$-bounded on $L_p(\O;X)$ with constant $\max((p')^{1-\frac{p}{q}},\, p^{1-\frac{p'}{q'}})$.
\end{lem}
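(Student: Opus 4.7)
The plan is to interpolate three endpoint $\el_q$-bounds at $q=p$, $q=\8$, and $q=1$. At $q=p$, each $M_t$ is a convex average of regular contractions, hence itself a regular contraction on $L_p(\O;X)$, and Fubini immediately gives
\beqn
\Big\|\big(\sum_k \|M_{t_k}f_k\|_X^p\big)^{\frac1p}\Big\|_{L_p(\O)}^p = \sum_k \|M_{t_k}f_k\|_{L_p(\O;X)}^p \le \sum_k \|f_k\|_{L_p(\O;X)}^p,
\eeqn
so the $\el_p$-bound is $1$.

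For $q=\8$ I would invoke Fendler's dilation theorem to write $T_t = Q\tilde T_t J$, where $J:L_p(\O)\to L_p(\tilde\O)$ and $Q:L_p(\tilde\O)\to L_p(\O)$ are regular contractions and $\{\tilde T_t\}_{t\in\real}$ is a strongly continuous group of positive invertible isometries on $L_p(\tilde\O)$. Then $M_t=Q\tilde M_t J$ with $\tilde M_t=\frac1t\int_0^t\tilde T_s\,ds$ a positive $L_p$-contraction. Using the regularity of $J$ and $Q$ together with the pointwise bound $\|\tilde M_t g\|_X\le \tilde M_t(\|g\|_X)$ valid for any positive operator, the $\el_\8$-boundedness reduces to the scalar maximal inequality $\|\sup_t\tilde M_tg\|_{L_p(\tilde\O)}\le p'\|g\|_{L_p(\tilde\O)}$ for $g\ge 0$. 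By Calder\'on's transference principle applied to the positive isometry group $\{\tilde T_t\}$, this follows from the $L_p(\real)$-boundedness of the one-sided Hardy--Littlewood maximal operator, whose norm is at most $p'$ via Marcinkiewicz interpolation between its weak-$(1,1)$ constant $1$ and its $L_\8$ constant $1$.

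The $q=1$ bound follows by duality. Applying the previous step to the adjoint semigroup $\{T_t^*\}$ on $L_{p'}(\O)$ (again regular contractions) yields $\|\sup_t|M_t|^*h\|_{p'}\le p\|h\|_{p'}$ for $h\ge 0$, where $|M_t|^*=|M_t^*|$ by regularity. Then for any $h\ge 0$ with $\|h\|_{p'}\le 1$, using $\|M_{t_k}f_k\|_X\le|M_{t_k}|(\|f_k\|_X)$,
\beqn
\int\sum_k\|M_{t_k}f_k\|_X\,h \le \int\Big(\sum_k\|f_k\|_X\Big)\sup_t|M_t|^*h \le p\,\Big\|\sum_k\|f_k\|_X\Big\|_p,
\eeqn
giving the $\el_1$-bound $p$. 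Finally, the assignment $(f_k)_k\mapsto(M_{t_k}f_k)_k$ acts linearly on $L_p(\O;\el_q(X))$, and Riesz--Thorin interpolation in the $q$-variable between $(p,1)$ and $(\8,p')$ with $\theta=1-p/q$ gives constant $p'^{1-p/q}$ for $p\le q\le\8$, while between $(1,p)$ and $(p,1)$ with $\theta=p'/q'$ it gives $p^{1-p'/q'}$ for $1\le q\le p$; taking the maximum yields the claimed bound.

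The principal obstacle is the $q=\8$ endpoint with the correct order $p'$: under the mere assumption of regularity on a single $L_p(\O)$, there is no direct maximal ergodic theorem available, so Fendler's dilation is essential to replace the arbitrary regular semigroup by a group of positive isometries, after which transference to $\real$ provides access to the classical one-sided Hardy--Littlewood maximal bound.
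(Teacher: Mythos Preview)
Your argument is correct and follows the same three-endpoint-plus-interpolation skeleton as the paper: $\el_p$-bound $1$, $\el_\8$-bound $p'$ from the maximal ergodic inequality, $\el_1$-bound $p$ by duality with the adjoint semigroup, then complex interpolation in the $q$-variable.

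The only real difference is at the $\el_\8$ endpoint. The paper does not go through Fendler and transference; it simply quotes Akcoglu's maximal ergodic theorem in the continuous-time form recorded in Krengel's book, which already covers semigroups of regular contractions on a single $L_p$ and yields $\big\|\sup_t |M_t|(g)\big\|_p\le p'\|g\|_p$ for $g\ge 0$. That is all one needs for the chain
\[
\sup_k\|M_{t_k}f_k\|_X \le \sup_k |M_{t_k}|(\|f_k\|_X) \le \sup_t |M_t|\big(\sup_j\|f_j\|_X\big).
\]
Your detour through Fendler's dilation and Calder\'on transference to the one-sided Hardy--Littlewood maximal operator is a perfectly valid way of \emph{proving} that very maximal inequality, but your assertion that it is ``essential'' is too strong: the result is available off the shelf. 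What your route buys is self-containment; what the paper's buys is brevity.

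One small correction: Fendler's extension to \emph{regular} (not positive) semigroups produces a group of regular isometries $\{\tilde T_t\}$, not a priori positive ones. Your pointwise inequality $\|\tilde M_t g\|_X\le \tilde M_t(\|g\|_X)$ therefore needs an extra step. The fix is easy: for a group of regular isometries on $L_p$ the moduli $\{|\tilde T_t|\}$ themselves form a group of positive isometries (since $|\tilde T_{s+t}|\le |\tilde T_s||\tilde T_t|$ and both sides are positive isometries on a strictly monotone lattice, forcing equality), and one runs the maximal/transference argument for that positive group.
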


 \begin{proof}
 The celebrated theorem of Akcoglu \cite{Ak} asserts  that $\{M_t\}_{t>0}$ satisfies the following maximal ergodic inequality
  $$\big\|\sup_{t>0}|M_t(f)|\,\big\|_{L_p(\O)}\le p' \big\|f\big\|_{L_p(\O)}\,,\quad f\in L_p(\O);$$
 see also \cite[Theorem~5.2.5]{Krengel}.
The regularity of $\{M_t\}_{t>0}$ insures that this inequality remains valid for any $f\in L_p(\O;X)$. Thus for any finite sequences $\{t_k\}\subset\real_+$ and  $\{f_k\}\subset L_p(\O;X)$, we have
   \begin{align*}
   \big\|\sup_{k}\|M_{t_k}(f_k)\|_X\big\|_{L_p(\O)}
   &\le \big\|\sup_{k}|M_{t_k}|(\|f_k\|_X)\big\|_{L_p(\O)}\\
  &\le  \big\|\sup_{k}|M_{t_k}|(\sup_j\|f_j\|_X)\big\|_{L_p(\O)} \\
  &\le  p'\big\|\sup_j\|f_j\|_X\,\big\|_{L_p(\O)}\,.
  \end{align*}
This means that $\{M_t\}_{t>0}$ is $\el_\8$-bounded on $L_p(\O;X)$ with constant $p'$.  On the other hand, $\{M_t\}_{t>0}$ is bounded, so $\el_p$-bounded on $L_p(\O;X)$ with constant  $1$. Thus by complex interpolation, $\{M_t\}_{t>0}$ is $\el_q$-bounded on $L_p(\O;X)$ with constant $(p')^{1-\frac{p}{q}}$ for $q>p$.

The case of $q<p$ is treated by duality. Applying the previous discussion to the adjoint semigroup $\{T^*_t\}_{t>0}$,  we deduce that $\{M^*_t\}_{t>0}$ is $\el_\8$-bounded on $L_{p'}(\O;X^*)$ with constant $p$, so $\{M_t\}_{t>0}$ is $\el_1$-bounded on $L_{p}(\O;X)$ with constant $p$. The assertion for $q<p$ then follows by complex interpolation once more.
  \end{proof}

\begin{rk}\label{lq-bdd poisson}
The above lemma and the subordination formula  \eqref{subordination} imply that  the Poisson subordinated semigroup $\{P_t\}_{t>0}$ is $\el_q$-bounded on $L_p(\O;X)$ with constant $C\max((p')^{1-\frac{p}{q}},\, p^{1-\frac{p'}{q'}})$, where $C$ is an absolute constant coming from \eqref{subordination}.
\end{rk}

It is a classical result that the negative generator $A$ of $\{T_t\}_{t>0}$ on $L_p(\O;X)$ is a sectorial operator of type $\frac\pi2$. The following shows that it is moreover $\el_q$-sectorial.

\begin{prop}\label{A}
  The negative generator $A$ of $\{T_t\}_{t>0}$ is an $\el_q$-sectorial operator of type $\frac\pi2$ on $L_p(\O;X)$. More precisely, the family
  $$\big\{z(z-A)^{-1}: z\notin \overline{\Sigma_\a}\,\big\}$$
 is $\el_q$-bounded on $L_p(\O;X)$ with constant $C_\a\max((p')^{1-\frac{p}{q}},\, p^{1-\frac{p'}{q'}})$ for any $\frac\pi2<\a<\pi$.

 Consequently, $\sqrt A$ is an $\el_q$-sectorial operator of type $\frac\pi4$ on $L_p(\O;X)$. Moreover,  the $\el_q$-boundedness constant of
  $\{z(z-\sqrt A\,)^{-1}: z\notin \overline{\Sigma_\a}\}$
 is majorized by $C_\a\max((p')^{1-\frac{p}{q}},\, p^{1-\frac{p'}{q'}})$ for any $\frac\pi4<\a<\pi$.
 \end{prop}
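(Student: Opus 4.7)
The plan is to deduce the $\ell_q$-sectoriality of $A$ from the $\ell_q$-boundedness of the ergodic averages $\{M_t\}_{t>0}$ obtained in Lemma~\ref{lq-bdd ergodic}, by expressing the resolvent of $A$ as an absolutely convergent integral of the $M_t$'s. Concretely, for $\mu\in\Sigma_{\pi/2}$ the standard Laplace-transform formula gives $(\mu+A)^{-1}=\int_0^\infty e^{-\mu t}T_t\,dt$. Since $tM_t=\int_0^t T_s\,ds$, an integration by parts (the boundary terms vanish because $\|M_t\|\le1$ and $|\mu e^{-\mu t}t|\to0$) yields the key identity
$$\mu(\mu+A)^{-1}=\mu^2\int_0^\infty e^{-\mu t}\,tM_t\,dt.$$

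For $z\notin\overline{\Sigma_\alpha}$ with $\frac{\pi}{2}<\alpha<\pi$, I set $\mu=-z$, so that $|\arg\mu|<\pi-\alpha<\frac{\pi}{2}$ and $z(z-A)^{-1}=\mu(\mu+A)^{-1}$. The identity above then realizes $z(z-A)^{-1}$ as an operator-valued integral of members of $\{M_t\}_{t>0}$ against the complex measure $\mu^2 e^{-\mu t}t\,dt$, whose total variation computes to
$$\int_0^\infty|\mu^2 e^{-\mu t}t|\,dt=\frac{|\mu|^2}{(\mathrm{Re}\,\mu)^2}\le\frac{1}{\cos^2(\pi-\alpha)}.$$
Combining Lemma~\ref{lq-bdd ergodic} (which gives the $\ell_q$-bound $\max(p'^{1-p/q},p^{1-p'/q'})$ for $\{M_t\}$) with Remark~\ref{conv} on absolutely convex hulls, we conclude that $\{z(z-A)^{-1}:z\notin\overline{\Sigma_\alpha}\}$ is $\ell_q$-bounded with constant at most $C_\alpha\max(p'^{1-p/q},p^{1-p'/q'})$, where $C_\alpha=1/\cos^2(\pi-\alpha)$. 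This establishes that $A$ is $\ell_q$-sectorial of type $\frac{\pi}{2}$ with the claimed constant.

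The second assertion on $\sqrt{A}$ is then immediate from Proposition~\ref{sqrtA} applied with $\theta=\frac12$: since $A$ is $\ell_q$-sectorial of type $\frac{\pi}{2}$, $\sqrt{A}=A^{1/2}$ is $\ell_q$-sectorial of type $\frac{\pi}{4}$, and the Cauchy contour bound in the proof of Proposition~\ref{sqrtA} (which majorizes the $\ell_q$-constant of $\{z(z-\sqrt{A})^{-1}\}$ by a purely geometric multiple of the $\ell_q$-constant for $A$) propagates the factor $\max(p'^{1-p/q},p^{1-p'/q'})$ without change. The only step that requires some attention is verifying that the boundary terms in the integration by parts indeed vanish; this reduces to the uniform contractivity of $M_t$ on $L_p(\Omega;X)$ (inherited from the regularity of $\{T_t\}_{t>0}$), so no real obstacle arises. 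The main conceptual content is the integration-by-parts representation of the resolvent in terms of $M_t$, which is exactly what converts Akcoglu's maximal theorem (built into Lemma~\ref{lq-bdd ergodic}) into a statement about resolvents.
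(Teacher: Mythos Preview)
Your proof is correct and takes essentially the same approach as the paper: both express the resolvent via the Laplace transform, integrate by parts to replace $T_t$ by the ergodic averages $M_t$, then invoke Lemma~\ref{lq-bdd ergodic} and Remark~\ref{conv} for the first part, and Proposition~\ref{sqrtA} for the second. The only cosmetic difference is that the paper works directly with $z$ satisfying ${\rm Re}\,z<0$ rather than substituting $\mu=-z$.
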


 \begin{proof}
    Let $z\in\com$ with ${\rm Re}z<0$. Then
    $$(z-A)^{-1}=\int_0^\8e^{tz}T_tdt=- z\int_0^\8te^{tz}M_tdt.$$
Thus by Lemma~\ref{lq-bdd ergodic} and Remark~\ref{conv}, we deduce that
$$\Big\{\frac{({\rm Re}z)^2}{|z|}(z-A)^{-1}: {\rm Re}z<0\Big\}$$
 is $\el_q$-bounded on $L_p(\O;X)$ with constant $\max((p')^{1-\frac{p}{q}},\, p^{1-\frac{p'}{q'}})$. This implies the assertion on $A$ with  the constant  $C_\a$ given by 
  $$C_\a=\sup_{z\notin \overline{\Sigma_\a}} \frac{({\rm Re}z)^2}{|z|^2}.$$ The second part on $\sqrt A$ then follows from Proposition~\ref{sqrtA}.
 \end{proof}

\begin{prop}\label{T-analyticity}
  Assume that  $\{T_t\}_{t>0}$ satisfies \eqref{Ana bound} for some $0<\b_0\le\frac\pi2$. Let $1<q<\8$ and $\b_q=\b_0\min(\frac{p}{q}\,,\frac{p'}{q'})$. Then $\{T_t\}_{t>0}$ is an $\el_q$-analytic semigroup of type $\b_q$ on $L_p(\O;X)$. More precisely, for any $0<\b<\b_q$ the family
    $\big\{T_z: z\in\Sigma_\b\big\}$
 is $\el_q$-bounded on $L_p(\O;X)$ with constant majorized by
  $$C(\b_q-\b)^{-1}\,\mathsf{T}_{\b_0}^{\min(\frac{p}{q},\,\frac{p'}{q'})}\max((p')^{1-\frac{p}{q}},\, p^{1-\frac{p'}{q'}}).$$
 Consequently, $A$ is $\el_q$-sectorial of type $\a_q=\frac\pi2-\b_q$ on $L_p(\O;X)$. More precisely, for any $\a_q<\a<\frac\pi2$ the family $\big\{z(z-A)^{-1}: z\notin \overline{\Sigma_\a}\big\}$ is $\el_q$-bounded on $L_p(\O;X)$  with the relevant constant majorized by
  $$C(\b_q-\b)^{-2}\,\mathsf{T}_{\b_0}^{\min(\frac{p}{q},\,\frac{p'}{q'})}\max((p')^{1-\frac{p}{q}},\, p^{1-\frac{p'}{q'}})\;\text {with }\; \b=\frac\pi2-\a.$$
  \end{prop}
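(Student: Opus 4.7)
The plan is to first prove that $\{T_t\}_{t>0}$ is $\el_q$-analytic of type $\b_q$ on $L_p(\O;X)$---equivalently, that for every $\b<\b_q$ the family $\{T_z:z\in\Si_\b\}$ is $\el_q$-bounded on $L_p(\O;X)$ with constant of the claimed order---and then deduce the statement on $A$ from Proposition~\ref{lq-analyticity}~(i)$\Rightarrow$(ii). In that implication, with $\g=\pi/2-\b$ and $\nu=\b_q-(\b_q-\b)/2$, the factor $|\cos(\g+\nu)|^{-1}\les(\b_q-\b)^{-1}$ combines with the $(\b_q-\b)^{-1}$ present in the analyticity constant below to give the $(\b_q-\b)^{-2}$ exponent in the resolvent estimate.

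The core step is an $\el_q$-bound for the ergodic averages $M_z=\int_0^1 T_{uz}\,du$: for every $\nu<\b_q$, $\{M_z:z\in\Si_\nu\}$ is $\el_q$-bounded on $L_p(\O;X)$ with constant $\les\mathsf{T}_{\b_0}^{\min(p/q,p'/q')}\max(p'^{1-p/q},p^{1-p'/q'})$. By the $(p,q)\leftrightarrow(p',q')$ symmetry of $\b_q$ and duality (the adjoint $\{T_t^*\}_{t>0}$ is a strongly continuous semigroup of regular operators on $L_{p'}(\O;X^*)$ satisfying \eqref{Ana bound} with the same $\mathsf{T}_{\b_0}$, and $\el_q$-boundedness dualizes to $\el_{q'}$-boundedness), it suffices to treat $p\le q$. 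Writing $z_k=r_k e^{i\t_k}\in\Si_\nu$ and setting $\f_k=\t_k q/p$, so that $|\f_k|<\nu q/p<\b_0$, I would consider on the strip $\{0\le\mathrm{Re}\zeta\le 1\}$ the operator-valued analytic family
$$S_\zeta(f_1,\dots,f_N)=\big(M_{r_k e^{i\f_k\zeta}}(f_k)\big)_k.$$
At $\mathrm{Re}\zeta=0$ each $r_k e^{-\f_k t}$ is positive real, so Akcoglu's maximal inequality (applied coordinate-wise as in Lemma~\ref{lq-bdd ergodic}) yields $\|S_\zeta\|_{L_p(\O;\el_\8^N(X))}\le p'$; at $\mathrm{Re}\zeta=1$, $S_\zeta$ consists of averages of operators of norm $\le\mathsf{T}_{\b_0}$, so $\|S_\zeta\|_{L_p(\O;\el_p^N(X))}\le\mathsf{T}_{\b_0}$. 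The uniform bound $\|S_\zeta\|_{L_p(\O;\el_p^N(X))}\le\mathsf{T}_{\b_0}$ on the entire strip provides the Phragm\'en--Lindel\"of growth, and Stein's complex interpolation at $s=p/q$ gives $\|S_\zeta\|_{L_p(\O;\el_q^N(X))}\les\mathsf{T}_{\b_0}^{p/q}p'^{1-p/q}$, uniformly in the $z_k$; this is the desired $\el_q$-bound for $M_z$.

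Passing from $M_z$ to $T_z$ on a smaller sector $\Si_\b$ uses $T_z=\frac{d}{dz}(zM_z)$ and Cauchy's formula
$$T_z=\frac1{2\pi i}\oint_{|\zeta-z|=\d|z|}\frac{\zeta M_\zeta}{(\zeta-z)^2}\,d\zeta$$
with $\d$ of order $\b_q-\b$ chosen so that the circle lies in $\Si_{(\b+\b_q)/2}\subset\Si_\nu$. Expressing each $T_{z_k}(f_k)$ as such an integral and invoking the previous $\el_q$-bound on $\{M_\zeta\}$ pointwise on the circle (via Remark~\ref{conv} and $|\zeta|\sim|z|$) yields the $\el_q$-boundedness of $\{T_z:z\in\Si_\b\}$ with the Cauchy factor $\d^{-1}\sim(\b_q-\b)^{-1}$. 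By Proposition~\ref{lq-analyticity}~(iii)$\Rightarrow$(i) this gives the $\el_q$-analyticity of $\{T_t\}_{t>0}$ of type $\b_q$, and (i)$\Rightarrow$(ii) of the same proposition yields the resolvent statement about $A$.

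The main obstacle will be a clean verification of the hypotheses of Stein's complex interpolation theorem for the operator-valued analytic family $S_\zeta$ on the vector-valued $\el_{q_s}^N(X)$-targets, together with the uniform tracking of all constants through the interpolation and the Cauchy step; the growth of $\|S_\zeta\|$ in $\mathrm{Im}\,\zeta$ is harmless since it is uniformly bounded by $\mathsf{T}_{\b_0}$ on the strip.
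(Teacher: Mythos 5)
Your proof is correct and follows essentially the same route as the paper: interpolate the $\el_q$-boundedness of the ergodic averages $M_z$ between Akcoglu's $\el_\infty$-bound on the ray and the trivial $\el_p$-bound on $\Sigma_{\b_0}$, transfer to $T_z$ on a slightly smaller sector via Cauchy's formula for $T_z=\frac{d}{dz}(zM_z)$, and then invoke Proposition~\ref{lq-analyticity} for the resolvent estimate. Your explicit Stein-interpolation family $S_\zeta$, with the angle folded into the interpolation parameter, makes precise the paper's brief invocation of ``complex interpolation'' and correctly reproduces the constants, including the $(\b_q-\b)^{-2}$ in the sectoriality bound via $\nu=(\b+\b_q)/2$.
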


\begin{proof}
  Define
  $$M_z=\frac1z\int_0^zT_\l d\l,\quad z\in\Si_{\b_0},$$
 where the integral is taken along the segment $[0,\,z]$. Clearly, $M$ is analytic in $\Si_{\b_0}$. By  Lemma~\ref{lq-bdd ergodic}, $\{M_t: t>0\}$ is $\el_\8$-bounded (resp. $\el_1$-bounded) on $L_p(\O;X)$ with constant $p'$ (resp. $p$). On the other hand, \eqref{Ana bound} means that $\{M_z: z\in\Si_{\b_0}\}$ is $\el_p$-bounded on $L_p(\O;X)$ with constant $\mathsf{T}_{\b_0}$. Then by complex interpolation, we see that $\{M_z: z\in\Si_{\b_q}\}$ is $\el_q$-bounded on $L_p(\O;X)$ with constant $\mathsf{T}_{\b_0}^{\frac{p}{q}}\,(p')^{1-\frac{p}{q}}$ for $p<q$ and $\mathsf{T}_{\b_0}^{\frac{p'}{q'}}\,p^{1-\frac{p'}{q'}}$ for $p>q$.

We use the identity $T_z=M_z+zM_z'$ to pass from $M_z$ to $T_z$, so it remains to show that $\{zM_z': z\in\Si_{\b}\}$ is $\el_q$-bounded. To this end, let $\d=\frac12(\b+\b_q)$. For any $z=re^{{\rm i}\theta}\in \Si_{\b}$ let $\C$ be the circle with center $z$ and radius $r\sin(\d-|\theta|)$. Note that one of the two rays limiting $\Si_{\d}$ is a tangent of $\C$. By the Cauchy integral formula, we have
 $$zM_z'=\frac{z}{2\pi{\rm i}}\int_\C\frac{M_\l\, d\l}{(\l-z)^2}\,.$$
Since
 $$\frac{|z|}{2\pi}\int_\C\frac{|d\l|}{|\l-z|^2}=\frac{1}{\sin(\d-|\theta|)}\le \frac{1}{\sin\frac12(\b_q-\b)}\,,$$
The $\el_q$-boundedness of $\{M_z: z\in\Si_{\d}\}$ and Remark~\ref{conv} imply that  $\{zM_z': z\in\Si_{\b}\}$ is $\el_q$-bounded on $L_p(\O;X)$ with constant majorized by
  $$C(\b_q-\b)^{-1}\,\mathsf{T}_{\b_0}^{\min(\frac{p}{q},\,\frac{p'}{q'})}\max((p')^{1-\frac{p}{q}},\, p^{1-\frac{p'}{q'}}).$$
The last part on the $\el_q$-sectoriality of $A$  follows from the proof of the implication (i)$\,\Rightarrow\,$(ii) of Proposition~\ref{lq-analyticity}, $\b$ and $\nu$ there being respectively $\b_q$ and $\b$ now.
  \end{proof}


\section{Proof of Theorem~\ref{proj}}\label{Proof of Theorem proj}


Armed with the preparation in section~\ref{boundedness}, we will follow the proof of \cite[Theorem~4.14]{JLMX}. In the sequel, we will use the abbreviation that $\partial=\frac{\partial}{\partial t}$. Recall that $\real_+$ is equipped with the measure $\frac{dt}t$. Also recall our convention that $\{T_t\}_{t>0}$ is a strongly continuous semigroup of regular operators on $L_p(\O)$ and $\{P_t\}_{t>0}$ its subordinated Poisson semigroup.

We first show part (i) concerning the subordinated Poisson semigroup $\{P_t\}_{t>0}$.  Fix $\frac\pi4<\a<\b<\frac\pi2$.
 Let $\Ga$ be the boundary of $\Si_\a$. Define $F(z)=-ze^{-z}$. Then $F\in H^\8_0(\Si_\b)$. For any $t>0$ we have
  \beq\label{proj-fc}
  t\partial P_t=F(t\sqrt A\,)=\frac1{2\pi\textrm{i}}\int_\Ga F(tz)R(z)dz,
 \eeq
 where $R(z)=(z-\sqrt A\,)^{-1}$. Recall that the map $\mathcal{P}$ is defined by \eqref{def projection} (with $\{P_t\}_{t>0}$ instead of $\{T_t\}_{t>0}$ there). It can be rewritten as
  \beq\label{proj-int}
  \mathcal{P}(h)_s
 =\frac2{\pi\textrm{i}}\int_\Ga\int_0^\8 F(sz)F(tz)zR(z)(h_t)\,\frac{dt}t \,\frac{dz}z\,,\quad s>0.
  \eeq
Let $\Ga$ be equipped with the measure $\frac{|dz|}{|z|}$. We define three maps as follows:
\begin{itemize}
\item[$\bullet$] $\Phi_1: L_p(\O;L_q(\real_+; X))\to L_p(\O;L_q(\Ga; X))$ by
 $$\Phi_1(h)_z=\int_0^\8 F(tz) h_t\,\frac{dt}t\,, \quad z\in\Ga,\; h\in  L_p(\O;L_q(\real_+; X)),$$
 \item[$\bullet$] $\Phi_2: L_p(\O;L_q(\Ga; X))\to L_p(\O;L_q(\real_+; X))$ by
 $$\Phi_2(g)_s=\int_\Ga F(sz) g_z\,\frac{dz}z\,,  \quad s>0, \;g\in  L_p(\O;L_q(\Ga; X))$$
\item[$\bullet$] $\Phi: L_p(\O;L_q(\Ga; X))\to L_p(\O;L_q(\Ga; X))$ by
 $$\Phi(g)_z= \frac2{\pi\textrm{i}}\,zR(z)(g_z),  \quad z\in\Ga, \;g\in  L_p(\O;L_q(\Ga; X)).$$
 \end{itemize}
Then $\mathcal{P}=\Phi_2\Phi\Phi_1$. Thus it remains to show that the three newly defined maps are all bounded. Consider first  the case $q<\8$. By the H\"older inequality, we have
 $$\big\|\Phi_1(h)_z\big\|_X^q\le\Big(\int_0^\8 |F(tz)|\,\frac{dt}{t}\Big)^{q-1} \int_0^\8|F(tz)|\,\big\|h_t\big\|^q_X\,\frac{dt}{t}\,.$$
Note that for $z=re^{\pm{\rm i}\a}\in\Ga$
 $$\int_0^\8 |F(tz)|\,\frac{dt}{t} =\int_0^\8 |F(te^{\pm{\rm i}\a})|\,\frac{dt}{t}=\frac1{\cos\a}\,.$$
On the other hand, for any $t>0$
 $$\int_\Ga |F(tz)|\frac{|dz|}{|z|}=\frac2{\cos\a}\,.$$
We then deduce that
  $$\big\|\Phi_1(h)\big\|_{L_p(\O;L_q(\Ga; X))}\le \frac{2^{\frac1q}}{\cos\a} \,\big\|h\big\|_{L_p(\O;L_q(\real_+; X))}\,.$$
 Thus
  $$\big\|\Phi_1\big\|\le  \frac{2^{\frac1q}}{\cos\a}\,.$$
 The same upper estimate holds for $\big\|\Phi_2\big\|$ too.
 Finally, the boundedness of $\Phi$ is just a reformulation of the $\el_q$-boundedness of $\big\{\frac2{\pi\textrm{i}}\,zR(z): z\in\Ga\setminus\{0\}\big\}$. Thus by Proposition~\ref{A},
  $$\big\|\mathcal{P}\big\|\les\frac1{\cos^2\a}\max((p')^{1-\frac{p}{q}},\, p^{1-\frac{p'}{q'}}).$$
  This finishes the proof of the first assertion for $q<\8$ (choosing $\a$ close to $\frac\pi4$). The boundedness of  $\mathcal{P}$ for $q=\8$ is obtained from that for $q=1$ by duality.

  Let us show that $\mathcal{P}$ is a projection. Let $h\in L_p(\O;L_q(\real_+; X))$ be given by $h_t=t \partial P_t(f)$ for some $f\in L_p(\O; X)$. Then by \eqref{proj-int}
    \begin{align*}
     \mathcal{P}(h)_s
 &=\frac2{\pi\textrm{i}}\int_\Ga\int_0^\8 F(sz)F(tz)F(tz)R(z)(f)\,\frac{dt}t \,dz\\
  &=\frac2{\pi\textrm{i}}\int_\Ga F(sz)R(z)(f)dz \int_0^\8(F(t))^2\frac{dt}t\\
   &=\frac1{2\pi\textrm{i}}\int_\Ga F(sz)R(z)(f)dz\\
   &=F(s\sqrt A\,)(f)=s\partial P_s(f).
   \end{align*}
 Thus $\mathcal{P}(h)=h$, so $\mathcal{P}$ is a projection. This shows the first assertion (i).

\medskip

Assertion (ii) on the semigroup $\{T_t\}_{t>0}$ itself  is proved exactly in the same way. Indeed, letting $\a_q=\frac\pi2-\b_q$, by Proposition~\ref{T-analyticity}, $A$ is $\el_q$-sectorial of type $\a_q$. Let $\b=\frac{\b_q}2$ and $\a=\frac\pi2-\b$. Then $\a_q<\a<\frac\pi2$ and
 $$\frac1{\cos^2\a}\approx \frac1{\b_q^2},\quad   \frac1{(\b_q-\b)^2}\approx  \frac1{\b_q^2}.$$
 Thus using the estimate on the $\el_q$-sectoriality constant of $A$ and repeating the above argument, we show that  $\mathcal{T}$ is bounded with the announced norm estimate.


\section{Luzin type and cotype}\label{Luzin type and cotype}


In this section we study Banach spaces that are of Luzin coype or type. Before proceeding we briefly discuss the projection $\mathsf F$ onto the  fixed point subspace of
$\{T_t\}_{t> 0}$ (equivalently, of $\{P_t\}_{t> 0}$). By the mean ergodic theorem, $\mathsf F$ is given by
$$\mathsf F(f)=\lim_{t\to\8}\frac1t\int_0^tT_s(f)ds,\quad f\in L_p(\O).$$
Thus $\mathsf F$ is also regular, so extends to a contractive projection on $L_p(\O;X)$. Then the above formula remains valid for $f\in L_p(\O;X)$  and $\mathsf F(L_p(\O;X))$ coincides with the  fixed point subspace of $\{T_t\}_{t> 0}$ on $L_p(\O;X)$. It follows that $L_p(\O;X)$ admits the following direct sum decomposition:
  \beq\label{decomposition}
 L_p(\O;X)= \mathsf F(L_p(\O;X))\oplus \ker\mathsf F.
  \eeq
 On the other hand, $\ker\mathsf F$ is the closure of $\big\{({\rm Id}-T_t)(L_p(\O;X)): t>0\big\}$. Moreover,
 \beq\label{kernel decomposition}
 \mathsf F(L_p(\O;X))=\ker A=\ker\sqrt A\;\text{ and }\;  \ker\mathsf F=\overline{{\rm im}A}=\overline{{\rm im}\sqrt A}.
 \eeq

By the paragraph before Lemma~\ref{lq-bdd ergodic}, we know that the adjoint semigroup $\{T^*_t\}_{t> 0}$ is regular on $L_{p'}(\O)$ too. Thus the above discussion also applies to the semigroup $\{T^*_t\}_{t> 0}$ that is extended to $L_{p'}(\O; X^*)$ again. Consequently,  \eqref{decomposition} and \eqref{kernel decomposition} transfer to this dual setting. We should draw the reader's attention to the fact that $L_{p'}(\O; X^*)$ is in general not the dual of $L_p(\O;X)$ but an isometric subspace. With this in mind, we have
 $${\rm Id}_{X^*}\otimes T_t^*=\big({\rm Id}_{X}\otimes T_t)^*\Big|_{L_{p'}(\O; X^*)}\,.$$
A similar formula holds for the negative generator $A$ of $\{T_t\}_{t> 0}$ on $L_p(\O;X)$ and the negative generator $A^*$ of $\{T^*_t\}_{t> 0}$ on $L_{p'}(\O; X^*)$, that is, the restriction to $L_{p'}(\O; X^*)$ of the adjoint of the former coincides with the latter. Moreover, $\mathsf F^*\big|_{L_{p'}(\O; X^*)}$ is the fixed point projection associated to  $\{T^*_t\}_{t> 0}$ on $L_{p'}(\O; X^*)$. All this allows us to use duality arguments without any problem as when $L_p(\O;X)^*=L_{p'}(\O; X^*)$ (which is the case for reflexive $X$).

\medskip

According to \cite{LP0}, we introduce the following definition already mentioned before Theorem~\ref{Poisson ML}.

\begin{definition}
 Let $1\le q\le \8$.
 \begin{enumerate}[(i)]
 \item Define
  \beq\label{LPS-funct}
  \mathcal{G}^T_q(f)=\Big(\int_0^\8\|t\partial T_t(f)\|^q_X\frac{dt}t\Big)^{\frac1q}\,,\quad f\in L_p(\O;X).
  \eeq
 \item $X$ is said to be of {\it Luzin cotype $q$ relative to} $\{T_t\}_{t> 0}$ if there exists a constant $c$ such that
 \beq\label{LPS-cotype}
 \big\|\mathcal{G}^T_q(f)\big\|_{L_p(\O)}\le c\big\|f\big\|_{L_p(\O;X)}\,,\quad f\in L_p(\O;X).
 \eeq
The smallest $c$ is denoted by $\mathsf{L}^T_{\cc, q, p}(X)$.
\item $X$ is said to be of {\it Luzin type $q$ relative to} $\{T_t\}_{t> 0}$ if there exists a constant $c$ such that
  \beq\label{LPS-type}
\big\|f-\mathsf{F}(f)\big\|_{L_{p}(\O;X)} \le c\big\|\mathcal{G}^{T}_{q}(f)\big\|_{L_{p}(\O)} \,,\quad f\in L_{p}(\O;X).
 \eeq
 The smallest  $c$ is denoted by $\mathsf{L}^T_{\tt, q, p}(X)$.
  \end{enumerate}
  \end{definition}

In \eqref{LPS-funct}, $f$ is implicitly assumed to belong to the definition domain of $A$ in order to guarantee the derivability of $T_t(f)$ in $t$.
Note that  if $\{T_t\}_{t> 0}$ is analytic on $L_p(\O;X)$, $\mathcal{G}^T_q(f)$ is defined for any $f\in L_p(\O;X)$. When it is defined, $\mathcal{G}^T_q(f)$ is a positive measurable function on $\O$ but may not belong to $L_{p}(\O)$ in which case $\|\mathcal{G}^{T}_{q}(f)\|_{L_{p}(\O)}$ is interpreted as $\8$ (then \eqref{LPS-type} is trivially verified for such $f$). On the other hand, the above definition implicitly depends on $p$,  but this dependence is not essential thanks to the fact that in most cases, if  \eqref{LPS-cotype} or  \eqref{LPS-type} holds for one $p$, so it does for any allowed $p$. Thus to lighten the terminology, we have decided to not explicitly mention $p$ in the above notions; anyway, this dependence on $p$ is reflected in the constants $\mathsf{L}^T_{\cc, q, p}(X)$ and $\mathsf{L}^T_{\tt, q,p}(X)$.

 \begin{rk}
 Without additional assumption on $\{T_t\}_{t> 0}$, the definition may be insignificant. For instance, if  $\{T_t\}_{t> 0}$ is the translation group of $\real$, it is easy to check that
 $$\Big\|\Big(\int_0^\8|t\partial T_t(f)|^q\frac{dt}t\Big)^{\frac1q}\Big\|^p_{L_p(\real)}
 =\int_{\real}\Big(\int_0^\8|t f'(s+t)|^q\frac{dt}t\Big)^{\frac{p}q} ds=\8$$
for any $1\le q\le\8$ and for any $f\in L_p(\real)$ with $f'$ not identically zero. Thus $\com$ is not of Luzin cotype $q$ for any $q$ relative to the translation group of $\real$.
 \end{rk}

 This remark shows that to have a meaningful theory of Luzin type and cotype some minimal condition should be imposed to $\{T_t\}_{t> 0}$. It turns out that this minimal condition is the analyticity of $\{T_t\}_{t> 0}$ on $L_{p}(\O;X)$. As shown in section~\ref{boundedness},  the subordinated Poisson semigroup $\{P_t\}_{t> 0}$ always satisfies this condition.

 \medskip

 It is sometimes convenient to have a discrete version of  $\mathcal{G}^T_q(f)$. Recall that if $\{T_t\}_{t> 0}$ is analytic on $L_p(\O)$, we have the following maximal inequality
   \beq\label{Tmax}
\big\|\sup_{t>0}|T_t(f)|\big\|_{L_{p}(\O)}\le \mathsf{T}_{\max}\|f\|_{L_{p}(\O)}\,,\quad f\in L_p(\O)
 \eeq
for some constant $\mathsf{T}_{\max}$ (see \cite{LMX}).  A similar inequality holds for the adjoint semigroup  $\{T^*_t\}_{t> 0}$, the relevant constant being denoted by  $\mathsf{T}^*_{\max}$.

\begin{prop}
Assume that $\{T_t\}_{t> 0}$ is analytic on $L_p(\O)$. Let $1\le q\le\8$ and  $a>1$. Then  for any $f\in L_{p}(\O;X)$
  $$
  c_{T, q,a}^{-1}\big\|\mathcal{G}^T_q(f)\big\|_{L_p(\O)}
 \le\Big\|\Big(\sum_{k\in\ent}\big\|a^k\partial T_{a^k}(f)\big\|_X^q\Big)^{\frac1q}\Big\|_{L_p(\O)} \le  C_{T, q,a}\big\|\mathcal{G}^T_q(f)\big\|_{L_p(\O)}\,,
  $$
 where
    \begin{align*}
    &c_{T, q,a}= q^{-\frac1q}(a^q-1)^{\frac1q}\max(\mathsf{T}_{\max}^{1-\frac{p}{q}},\, {\mathsf{T}^*}_{\max}^{1-\frac{p'}{q'}}),\\
   &C_{T, q,a}=q^{\frac1q}(1-a^{-q})^{-\frac1q}\max(\mathsf{T}_{\max}^{1-\frac{p}{q}},\, {\mathsf{T}^*}_{\max}^{1-\frac{p'}{q'}})\,.
    \end{align*}
 Similar inequalities hold for $\{P_t\}_{t> 0}$ in place of $\{T_t\}_{t> 0}$ without any additional assumption on $\{T_t\}_{t> 0}$, the corresponding constants being given by
    \begin{align*}
    &c_{P, q,a}=C^{-1} q^{-\frac1q}(a^q-1)^{\frac1q}\max((p')^{1-\frac{p}{q}},\, p^{1-\frac{p'}{q'}}),\\
    &C_{P, q,a}=Cq^{\frac1q}(1-a^{-q})^{-\frac1q}\max((p')^{1-\frac{p}{q}},\, p^{1-\frac{p'}{q'}})\,.
      \end{align*}
 \end{prop}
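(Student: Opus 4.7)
The plan is to compare the continuous $g$-function with its discrete sampling on the geometric grid $\{a^k\}$ through the semigroup identity valid for $0\le s\le t$,
\begin{equation*}
t\,\partial T_t(f) \;=\; \frac{t}{s}\, T_{t-s}\bigl(s\,\partial T_s(f)\bigr),
\end{equation*}
taken with $s=a^k$ on each dyadic window $[a^k,a^{k+1}]$ or $[a^{k-1},a^k]$. Both inequalities in the proposition will then reduce to an $\el_q$-boundedness statement for the family $\{T_u\}_{u\ge 0}$ on $L_p(\O;X)$ combined with two exact integration-by-substitution identities.

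First, I would establish the $\el_q$-boundedness of $\{T_u\}_{u\ge 0}$ on $L_p(\O;X)$ with the sharp constant $C_q:=\max(\mathsf{T}_{\max}^{1-p/q},(\mathsf{T}^*_{\max})^{1-p'/q'})$. Since each $T_u$ is a contraction on $L_p(\O;X)$, the family is $\el_p$-bounded with constant $1$. The scalar maximal inequality \eqref{Tmax} together with the regularity domination $\|T_u g\|_X\le |T_u|(\|g\|_X)$ gives $\el_\infty$-boundedness with constant $\mathsf{T}_{\max}$; complex interpolation between $\el_p$ and $\el_\infty$ supplies the $\el_q$-bound $\mathsf{T}_{\max}^{1-p/q}$ for $q\ge p$. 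Applying the same reasoning to the adjoint semigroup on $L_{p'}(\O;X^*)$ yields $\el_1$-boundedness of $\{T_u\}_{u\ge 0}$ on $L_p(\O;X)$ with constant $\mathsf{T}^*_{\max}$, and interpolation with $\el_p$ gives the $\el_q$-bound $(\mathsf{T}^*_{\max})^{1-p'/q'}$ for $q\le p$. A routine step-function approximation then extends this $\el_q$-boundedness to arbitrary positive measure spaces in place of sequences.

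Next, writing $g_k:=a^k\,\partial T_{a^k}(f)$, I would perform the substitutions $v=t/a^k\in[1,a]$ on $[a^k,a^{k+1}]$ and $u=t/a^k\in[1/a,1]$ on $[a^{k-1},a^k]$ to obtain the two exact identities
\begin{equation*}
\int_{a^k}^{a^{k+1}}\|t\,\partial T_t(f)\|_X^q\,\frac{dt}{t} \;=\; \int_1^a v^{q-1}\,\|T_{a^k(v-1)}(g_k)\|_X^q\,dv,
\end{equation*}
\begin{equation*}
\frac{1-a^{-q}}{q}\,\|g_k\|_X^q \;=\; \int_{a^{k-1}}^{a^k}\|T_{a^k-t}(t\,\partial T_t(f))\|_X^q\,\frac{dt}{t}.
\end{equation*}
Summing over $k\in\ent$, taking $L_p$-norms of the $q$-th roots, and invoking the continuous $\el_q$-boundedness from the previous step (with the product measure $v^{q-1}dv$ on $[1,a]$ times counting measure on $\ent$ in the first case, and $dt/t$ on $(0,\infty)$ in the second) yields the upper bound with constant $q^{-1/q}(a^q-1)^{1/q}\,C_q=c_{T,q,a}$ and the lower bound with constant $q^{1/q}(1-a^{-q})^{-1/q}\,C_q=C_{T,q,a}$.

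For $\{P_t\}_{t>0}$ the scheme applies verbatim, except that the $\el_q$-boundedness of $\{P_u\}_{u\ge 0}$ is supplied by Remark~\ref{lq-bdd poisson}, which rests on Akcoglu's ergodic maximal inequality for the averages $M_t$ and the subordination formula \eqref{subordination}, yielding the absolute-constant bound $C\max(p'^{1-p/q},p^{1-p'/q'})$ and requiring no analyticity hypothesis on $\{T_t\}$; this produces $c_{P,q,a}$ and $C_{P,q,a}$ up to the same absolute factor. The main obstacle throughout is purely a bookkeeping one: to reach the sharp constants $(a^q-1)^{1/q}$ and $(1-a^{-q})^{-1/q}$ rather than the looser $(a-1)^{1/q}$ or $(\log a)^{1/q}$ that a crude bound $t/a^k\le a$ would produce, I must treat the two relations as equalities integrated against the precisely chosen weights $v^{q-1}dv$ and $u^{q-1}du$, and the interpolation in the first step must be done with exactly the endpoints $(\el_p,\el_\infty)$ and $(\el_1,\el_p)$.
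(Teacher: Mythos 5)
Your proposal is correct and follows essentially the same route as the paper: establish $\el_q$-boundedness of the semigroup by interpolating between the $\el_p$ bound (contractivity) and $\el_\infty$/$\el_1$ bounds obtained from the scalar maximal inequality and its adjoint, and then use the semigroup identity $t\,\partial T_t = (t/s)\,T_{t-s}(s\,\partial T_s)$ on each dyadic block $[a^k,a^{k+1}]$ (resp.\ $[a^{k-1},a^k]$) to transfer between continuous and sampled square functions, with the exact weights $v^{q-1}\,dv$ producing the constants $(a^q-1)^{1/q}$ and $(1-a^{-q})^{-1/q}$. The two exact identities you write are, after the change of variables $t\mapsto t/a^k$, precisely the identities used in the paper's proof, and the appeal to Remark~\ref{lq-bdd poisson} for the Poisson case is the same.
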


\begin{proof}
 Using \eqref{Tmax}, its adjoint version and repeating the proof of Lemma~\ref{lq-bdd ergodic}, we show that  $\{T_t\}_{t> 0}$ is $\el_q$-bounded on $L_p(\O;X)$ with constant $\max(\mathsf{T}_{\max}^{1-\frac{p}{q}},\, {\mathsf{T}^*}_{\max}^{1-\frac{p'}{q'}})$.
Write
  \begin{align*}
 \mathcal{G}^T_q(f)^q
 =\sum_{k\in\ent}\int_{a^k}^{a^{k+1}}\big\|t\partial T_t(f)\big\|_X^q\frac{dt}t
=\sum_{k\in\ent}\int_{1}^{a}\big\|a^kt\partial T_{a^kt}(f)\big\|_X^q\frac{dt}t\,.
 \end{align*}
 Using $\partial T_{t+s}=T_s\partial T_{t}$, we have $\partial T_{a^kt}(f)=T_{a^k(t-1)}\partial T_{a^k}(f)$. Then the $\el_q$-boundedness of  $\{T_t\}_{t> 0}$ on $L_p(\O;X)$ yields
  \begin{align*}
 \big\|\mathcal{G}^T_q(f)\big\|_{L_p(\O)}
  &\le \max(\mathsf{T}_{\max}^{1-\frac{p}{q}},\, {\mathsf{T}^*}_{\max}^{1-\frac{p'}{q'}})
  \Big\|\Big(\sum_{k\in\ent}\int_{1}^{a}\big\|a^kt\partial T_{a^k}(f)\big\|_X^q\frac{dt}t\Big)^{\frac1q}\Big\|_{L_p(\O)}\\
  &= q^{-\frac1q}(a^q-1)^{\frac1q}\max(\mathsf{T}_{\max}^{1-\frac{p}{q}},\, {\mathsf{T}^*}_{\max}^{1-\frac{p'}{q'}})
   \Big\|\Big(\sum_{k\in\ent}\big\|a^k\partial T_{a^k}(f)\big\|_X^q\Big)^{\frac1q}\Big\|_{L_p(\O)}\,.
   \end{align*}
  For the converse inequality, we write
   $$\big\|a^k\partial T_{a^k}(f)\big\|_X^q=q(1-a^{-q})^{-1}\int_{a^{-1}}^1\big\|a^ktT_{a^k(1-t)}\partial T_{a^kt}(f)\big\|_X^q\frac{dt}t\,.$$
  As above, we then deduce
   $$\Big\|\Big(\sum_{k\in\ent}\big\|a^k\partial T_{a^k}(f)\big\|_X^q\Big)^{\frac1q}\Big\|_{L_p(\O)}
   \le q^{\frac1q}(1-a^{-q})^{-\frac1q}\max(\mathsf{T}_{\max}^{1-\frac{p}{q}},\, {\mathsf{T}^*}_{\max}^{1-\frac{p'}{q'}})\big\|\mathcal{G}^T_q(f)\big\|_{L_p(\O)}\,.$$
 The second part on $\{P_t\}_{t> 0}$ is just a particular case with $\mathsf{P}_{\max}=Cp'$ and $\mathsf{P}^*_{\max}=Cp$  by virtue of  Remark~\ref{lq-bdd poisson}.
 \end{proof}

 Recall the classical fact that $\{T_t\}_{t> 0}$ is analytic on $L_p(\O; X)$ iff $\{t\partial T_t:t> 0\}$ is uniformly bounded on $L_p(\O; X)$. Thus the following remark immediately follows from the above result; it shows in particular that the analyticity of $\{T_t\}_{t> 0}$ on $L_p(\O; X)$ is necessary  for $X$ to be of Luzin cotype $q$ relative to $\{T_t\}_{t> 0}$ for some $q$.

 \begin{rk}\label{necessity of analyticity}
  Assume that $\{T_t\}_{t> 0}$ is analytic on $L_p(\O)$.
  If $X$ is of Luzin cotype $($resp. type$)$ $q$  relative to $\{T_t\}_{t> 0}$, then $X$ is of Luzin cotype $($resp. type$)$ $r$ relative to $\{T_t\}_{t> 0}$ for any $r>q$ $($resp. $r<q)$. Moreover, if  $X$ is of Luzin cotype $\8$  relative to $\{T_t\}_{t> 0}$, then $\{T_t\}_{t> 0}$ must be analytic on $L_p(\O; X)$.
 \end{rk}

 The following is one of the main results of this section.

\begin{thm}\label{dual}
 Let $X$ be a Banach space and $1\le q\le\8$.
 \begin{enumerate}[\rm(i)]
 \item $X$ is of Luzin cotype $q$ relative to $\{P_t\}_{t> 0}$ iff  $X^*$ is of Luzin type $q'$ relative to $\{P^*_t\}_{t> 0}$.\\
  Moreover, the relevant constants satisfy
  $$\mathsf{L}^{P^*}_{\tt, q', p'}(X^*)\les \mathsf{L}^P_{\cc, q, p}(X)\les\max((p')^{1-\frac{p}{q}},\, p^{1-\frac{p'}{q'}})\mathsf{L}^{P^*}_{\tt, q', p'}(X^*)\,.$$
 \item Assume additionally that $1<q<\8$ and $\{T_t\}_{t>0}$ satisfies \eqref{Ana bound} for some $0<\b_0\le\frac\pi2$. Then $X$ is of Luzin cotype $q$ relative to $\{T_t\}_{t> 0}$ iff  $X^*$ is of Luzin type $q'$ relative to $\{T^*_t\}_{t> 0}$.  Moreover, the relevant constants satisfy
  $$\mathsf{L}^{T^*}_{\tt, q', p'}(X^*)\les\mathsf{L}^T_{\cc, q, p}(X)\les
  \b_q^{-4}\,\mathsf{T}_{\b_0}^{\min(\frac{p}{q},\,\frac{p'}{q'})}\,\max((p')^{1-\frac{p}{q}},\, p^{1-\frac{p'}{q'}})\,\mathsf{L}^{T^*}_{\tt, q', p'}(X^*)$$
   with $\b_q=\b_0\min(\frac{p}{q},\,\frac{p'}{q'})$.
 \end{enumerate}
 \end{thm}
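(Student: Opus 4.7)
Both parts follow the same scheme, so I would describe the approach for Part~(i) in full and indicate the modification for Part~(ii) at the end. The central observation is that the linear map $J\colon L_p(\O;X)\to L_p(\O;L_q(\real_+;X))$ given by $(Jf)_t=t\partial P_t(f)$ satisfies $\|Jf\|_{L_p(L_q(X))}=\|\mathcal G^P_q(f)\|_{L_p(\O)}$, so the Lusin cotype $q$ inequality for $X$ is exactly the boundedness of $J$ with norm $\mathsf L^P_{c,q,p}(X)$; the analogous $J_{P^*}$ built from $\{P_t^*\}$ encodes $\mathcal G^{P^*}_{q'}$, and the Lusin type $q'$ inequality for $X^*$ is the lower bound $\|g-\mathsf F^*g\|_{L_{p'}(X^*)}\le\mathsf L^{P^*}_{t,q',p'}(X^*)\|J_{P^*}g\|$. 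Throughout I treat $L_{p'}(\O;X^*)$ as a norming subspace of $L_p(\O;X)^*$.

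For the easy direction $\mathsf L^{P^*}_{t,q',p'}(X^*)\lesssim\mathsf L^P_{c,q,p}(X)$, the key input is the resolution of identity coming from $F(z)=-ze^{-z}$: since $\int_0^\infty F(t)^2\,dt/t=\frac14$, one has $4\int_0^\infty(t\partial P_t)^2\,dt/t=\mathrm{Id}$ on $\overline{\mathrm{im}\sqrt A}=\ker\mathsf F$. Pairing with $g\in L_{p'}(\O;X^*)$ and using $(t\partial P_t)^*=t\partial P_t^*$ yields
\[
\langle f-\mathsf F f,g\rangle=\langle f,g-\mathsf F^*g\rangle=4\int_\O\int_0^\infty\langle t\partial P_t(f),\,t\partial P_t^*(g)\rangle\,\frac{dt}{t}\,d\mu.
\]
Pointwise H\"older in $\o$ (via $\el_q$-$\el_{q'}$ duality) followed by H\"older in $\O$ gives $|\langle f,g-\mathsf F^*g\rangle|\le 4\|\mathcal G^P_q(f)\|_{L_p}\|\mathcal G^{P^*}_{q'}(g)\|_{L_{p'}}$; the cotype hypothesis bounds the first factor by $\mathsf L^P_{c,q,p}(X)\|f\|$, and taking $\sup$ over $\|f\|_{L_p(X)}\le 1$ gives the assertion.

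The hard direction $\mathsf L^P_{c,q,p}(X)\lesssim\max(p'^{1-p/q},p^{1-p'/q'})\mathsf L^{P^*}_{t,q',p'}(X^*)$ is where Theorem~\ref{proj} is essential. By duality $\|Jf\|=\sup\{|\langle Jf,h\rangle|:\|h\|_{L_{p'}(L_{q'}(X^*))}\le 1\}$. A direct computation from \eqref{def projection} combined with the resolution of identity gives $\mathcal P(Jf)=Jf$, whence $\langle Jf,h\rangle=\langle Jf,\mathcal P_{P^*}(h)\rangle$, where $\mathcal P_{P^*}$ is the projection obtained from \eqref{def projection} by swapping $P_t\leftrightarrow P_t^*$; this is the formal adjoint of $\mathcal P$ under the natural bilinear pairing, and is a bounded projection by Theorem~\ref{proj}(i) applied to the adjoint semigroup. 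Writing $\mathcal P_{P^*}(h)_t=4\,t\partial P_t^*(\phi)$ with $\phi:=\int_0^\infty s\partial P_s^*(h_s)\,ds/s\in L_{p'}(\O;X^*)$ and invoking the dual resolution of identity, one computes
\[
\langle Jf,h\rangle=4\int_0^\infty\langle t\partial P_t(f),\,t\partial P_t^*(\phi)\rangle\,\frac{dt}{t}=\langle f,\phi-\mathsf F^*\phi\rangle.
\]
Applying the Lusin type inequality for $X^*$ to $\phi$ yields $\|\phi-\mathsf F^*\phi\|\le\mathsf L^{P^*}_{t,q',p'}(X^*)\|J_{P^*}\phi\|=\frac14\mathsf L^{P^*}_{t,q',p'}(X^*)\|\mathcal P_{P^*}(h)\|$, and Theorem~\ref{proj}(i) bounds $\|\mathcal P_{P^*}(h)\|\lesssim\max(p^{1-p'/q'},p'^{1-p/q})\|h\|$. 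Combining these estimates delivers the desired constant.

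Part~(ii) proceeds identically after replacing $J,\mathcal P,J_{P^*},\mathcal P_{P^*}$ by the corresponding operators built from $\{T_t\}$ and $\{T_t^*\}$; the sharper constant follows from Theorem~\ref{proj}(ii), which requires the analyticity assumption \eqref{Ana bound}. The main delicate point is the identification of the ``adjoint'' projection with $\mathcal P_{P^*}$ (respectively $\mathcal T_{T^*}$): this is a Fubini argument verifying that the symmetric integral kernel in \eqref{def projection} transforms correctly under $P_t\leftrightarrow P_t^*$, together with checking absolute convergence of all pairings on dense subclasses (e.g.\ $f$ in the domain of $A$) so that the formal manipulations $\mathcal P(Jf)=Jf$ and $\langle Jf,h\rangle=\langle Jf,\mathcal P_{P^*}h\rangle$ are justified before extending by density.
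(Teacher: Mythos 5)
Your proposal is correct and follows essentially the same strategy as the paper: the easy direction uses the resolution of identity from $F(z)=-ze^{-z}$ plus Hölder, and the hard direction tests $\mathcal G^P_q(f)$ against $h\in L_{p'}(\O;L_{q'}(\real_+;X^*))$ and applies Theorem~\ref{proj}(i) to the adjoint semigroup. The only cosmetic difference is that you insert the projection $\mathcal P_{P^*}$ before unwinding, whereas the paper directly passes to $g=\int_0^\infty t\partial P_t^*(h_t)\,dt/t$ (your $\phi$) and observes that $\mathcal G^{P^*}_{q'}(g)=\frac14\|\mathcal P_{P^*}(h)_\cdot\|$, which amounts to the same computation.
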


 \begin{proof}
    (i) Assume that $X$ is of Luzin cotype $q$. Let $g\in L_{p'}(\O;X^*)$ with $\mathsf{F^*}(g)=0$. Let $ f\in L_p(\O;X)$. We want to estimate
    $\la f,\, g\ra$, where the duality bracket is that between $L_p(\O;X)$ and $L_{p'}(\O;X^*)$. By \eqref{decomposition} and its dual version, we can assume $\mathsf{F}(f)=0$, which, together with \eqref{kernel decomposition}, implies that $f\in  \ker\mathsf F=\overline{{\rm im}\sqrt A}$. With $F(z)=-ze^{-z}$ and by \eqref{resolution} we have
   $$f=4 \int_0^\8 F(t\sqrt A\,) F(t\sqrt A\,)(f)\,\frac{dt}t\,.$$
Thus by the H\"older inequality and  the Luzin cotype $q$ of $X$
  \begin{align*}
  \big|\la f,\, g\ra\big|
   &=4 \left|\int_0^\8   \la F(t\sqrt A\,)(f),\, F(t\sqrt {A^*})(g)\ra\,\frac{dt}t\right|\\
   &\le 4\big\|\mathcal{G}^P_q(f)\big\|_{L_p(\O)}\,\big\|\mathcal{G}^{P^*}_{q'}(g)\big\|_{L_{p'}(\O)}\\
   &\le 4\,\mathsf{L}^P_{\cc, q, p}(X)\big\|f\big\|_{L_p(\O;X)}\,\big\|\mathcal{G}^{P^*}_{q'}(g)\big\|_{L_{p'}(\O)}\,.
    \end{align*}
Taking the supremum over $f$ with norm 1, we show that $X^*$ is of Luzin type $q'$ with
 $$\mathsf{L}^{P^*}_{\tt, q', p'}(X^*)\le4\,\mathsf{L}^P_{\cc, q, p}(X).$$
To show the converse implication, let $f\in L_p(\O; X)$ and $h\in L_{p'}(\O;L_{q'}(\real_+;X^*))$ (recalling that $\real_+$ is equipped with $\frac{dt}t$). We have
   $$\int_0^\8\la t\partial P_t(f),\, h_t\ra\,\frac{dt}t= \int_0^\8\la f,\, t\partial P^*_t(h_t)\ra\,\frac{dt}t
   =\la f,\, g\ra,$$
  where
   $$g=\int_0^\8t\partial P^*_t(h_t)\,\frac{dt}t.$$
   Applying Theorem~\ref{proj} (i) to $\{P^*_t\}_{t> 0}$ on $L_{p'}(\O;X^*)$, we have
    $$\big\|\mathcal{G}^{P^*}_{q'}(g)\big\|_{L_{p'}(\O)}\les\max(p'^{1-\frac{p}{q}},\, p^{1-\frac{p'}{q'}}) \big\|h\big\|_{L_{p'}(\O;L_{q'}(\real_+;X^*))}\,.$$
   Combining the above inequalities,  we get
   $$\left|\int_0^\8\la t\partial P_t(f),\, h_t\ra\,\frac{dt}t\right|
   \les\max(p'^{1-\frac{p}{q}},\, p^{1-\frac{p'}{q'}})\mathsf{L}^{P^*}_{\tt, q', p'}(X^*)\big\|f\big\|_{L_p(\O;X)}\,\big\|h\big\|_{L_{p'}(\O;L_{q'}(\real_+;X^*))}\,,$$
  which implies the Luzin cotype $q$ of $X$ with
   $$\mathsf{L}^P_{\cc, q, p}(X)\les\max(p'^{1-\frac{p}{q}},\, p^{1-\frac{p'}{q'}})\mathsf{L}^{P^*}_{t, q', p'}(X^*)\,.$$

   (ii) The proof of this part is  similar by using Theorem~\ref{proj} (ii).
   \end{proof}

 \begin{cor}
 Any Banach space $X$ is of Luzin type $1$ relative to $\{P_t\}_{t>0}$, so relative to $\{T_t\}_{t>0}$ too. If $\{T_t\}_{t>0}$ is analytic on $L_p(\O; X)$, then $X$ is of Luzin cotype $\8$ relative to $\{T_t\}_{t>0}$, so $X$ is always of Luzin cotype $\8$ relative to the subordinated Poisson semigroup $\{P_t\}_{t>0}$.
 \end{cor}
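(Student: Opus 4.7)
I would split the corollary into its two assertions and attack each separately: the Lusin type $1$ inequality by direct Bochner-integral calculus, the Lusin cotype $\infty$ inequality via the duality theorem~\ref{dual}.

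\emph{Lusin type $1$.} For $f \in L_p(\O;X)$ the decomposition \eqref{decomposition} together with \eqref{kernel decomposition} places $f - \mathsf{F}(f)$ in $\overline{\mathrm{im}\sqrt{A}} = \ker\mathsf{F}$, so strong continuity of $\{P_t\}$ gives $P_t(f-\mathsf{F}(f)) \to f - \mathsf{F}(f)$ as $t\to 0^+$, while the mean ergodic theorem applied inside $\ker\mathsf{F}$ forces $P_t(f - \mathsf{F}(f)) \to 0$ as $t \to \infty$. On a dense subspace of $L_p(\O;X)$ the fundamental theorem of calculus then yields, in the Bochner sense,
\[
f - \mathsf{F}(f) = -\int_0^\infty \partial_t P_t(f)\,dt.
\]
Bringing the $X$-norm inside the integral gives $\|f - \mathsf{F}(f)\|_X \leq \mathcal{G}^P_1(f)$ pointwise, and taking $L_p(\O)$-norms yields $\mathsf{L}^P_{t,1,p}(X) \leq 1$. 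Remark~\ref{Poisson vs Heat} transfers the same bound to $\{T_t\}$, so $\mathsf{L}^T_{t,1,p}(X) \leq 1$.

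\emph{Lusin cotype $\infty$ by duality.} Since regularity passes to adjoints, $\{T^*_t\}$ is again a strongly continuous semigroup of regular operators on $L_{p'}(\O)$ with $\{P^*_t\}$ as its subordinated Poisson semigroup, and it is analytic on $L_{p'}(\O;X^*)$ iff $\{T_t\}$ is analytic on $L_p(\O;X)$. Applying the Lusin type $1$ bound just proved to $(X^*, \{T^*_t\})$ yields $\mathsf{L}^{P^*}_{t,1,p'}(X^*) \leq 1$ unconditionally, and $\mathsf{L}^{T^*}_{t,1,p'}(X^*) \leq 1$ when $\{T_t\}$ is analytic. Theorem~\ref{dual}(i) applied at the endpoint $q = \infty$, $q' = 1$ --- legitimate since Theorem~\ref{proj}(i) holds over the full range $1 \leq q \leq \infty$ --- converts the Lusin type $1$ of $X^*$ relative to $\{P^*_t\}$ into Lusin cotype $\infty$ of $X$ relative to $\{P_t\}$, for every Banach space $X$. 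The corresponding statement for $\{T_t\}$ itself comes from Theorem~\ref{dual}(ii), again at $q = \infty$; the ``so'' clause of the corollary then follows by specialising that statement to $\{T_t\} = \{P_t\}$, which is always analytic on $L_p(\O;X)$ by Propositions~\ref{A}, \ref{sqrtA}, and \ref{lq-analyticity}.

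\emph{Main obstacle.} The delicate point is that Theorem~\ref{dual}(ii), and the underlying projection Theorem~\ref{proj}(ii), are stated only for $1 < q < \infty$: the $\el_q$-sectoriality angle $\beta_q = \beta_0\min(p/q,p'/q')$ furnished by Proposition~\ref{T-analyticity} degenerates to zero as $q \to \infty$, so the endpoint $q = \infty$ cannot simply be reached by interpolation inside the machinery of Section~\ref{boundedness}. To circumvent this I would proceed by a direct pointwise argument, representing $t\partial T_t(f)$ via Cauchy's formula on a circle of radius $\sim t\sin\beta_0$ centred at $t$ inside the analyticity sector $\Sigma_{\beta_0}$, bounding it pointwise by the circle-average of $\|T_z(f)\|_X$, and then using the regularity of $T_{t/2}$ together with the maximal inequality \eqref{Tmax} extended to the vector-valued setting (exactly as in the proof of Lemma~\ref{lq-bdd ergodic}) to convert the sectoral pointwise estimate into the desired $L_p(\O)$-bound on $\sup_t \|t\partial T_t(f)\|_X$. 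This pointwise sectoral maximal step is the one I expect to be the main technical hurdle.
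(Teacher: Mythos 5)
Your argument for the Lusin type $1$ assertion is essentially the paper's: the paper invokes the resolution of the identity \eqref{resolution} with $\psi(z)=-ze^{-z}$, $B=\sqrt{A}$, which gives $f=-\int_0^\infty t\partial P_t(f)\,\frac{dt}{t}$ on $\ker\mathsf{F}$, then takes the $X$-norm inside; your Bochner/FTC rendering is the same computation in different clothing, and the transfer to $\{T_t\}$ via Remark~\ref{Poisson vs Heat} matches the paper exactly. Likewise, your deduction of Lusin cotype $\infty$ relative to $\{P_t\}$ from Theorem~\ref{dual}(i) at $q=\infty$ (which is indeed stated for $1\le q\le\infty$, resting on Theorem~\ref{proj}(i)) is correct and coincides with the paper's route.

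Where you diverge from the paper is instructive. The paper's one-line deduction of Lusin cotype $\infty$ relative to $\{T_t\}$ simply says ``passing to duality by means of Theorem~\ref{dual}'', but Theorem~\ref{dual}(ii) is stated only for $1<q<\infty$, and the underlying Theorem~\ref{proj}(ii) and Proposition~\ref{T-analyticity} genuinely degenerate at the endpoints: the angle $\beta_q=\beta_0\min(p/q,\,p'/q')$ vanishes as $q\to\infty$, so the interpolation machinery of Section~\ref{boundedness} gives nothing at $q'=1$. You flag exactly this and, rather than pretend the citation works, propose a direct sectorial maximal argument (Cauchy's formula on a circle $|z-ct|\sim t\sin\nu$, factoring through a real $T_{s}$ and controlling the rest by regularity and the maximal inequality \eqref{Tmax}). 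That is the right workaround in spirit --- it is essentially what lies behind \eqref{Tmax} and the maximal results in \cite{LMX} --- but, as you acknowledge, you have not carried it out: the step from a circle average of $\|T_z(f)\|_X$ to a single maximal function that \eqref{Tmax} controls is nontrivial, because the supremum over the sector does not reduce to a supremum over one ray without a further decomposition. So your proposal leaves the same step incomplete that the paper's proof elides, only you are explicit about it; filling the gap properly would require either extending Theorem~\ref{dual}(ii)/Theorem~\ref{proj}(ii) to $q=\infty$ by a different mechanism, or completing the pointwise sectorial maximal estimate you sketch.
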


\begin{proof}
 Indeed, let  $f\in L_p(\O; X)$ such that $\mathsf F(f)=0$. Then by \eqref{kernel decomposition} and \eqref{resolution} we have
   $$f=-\int_0^\8 t\partial P_t(f)\,\frac{dt}t\,,$$
 whence
   $$\big\|f\big\|_{L_p(\O;X)}\le\big\|\mathcal{G}^P_1(f)\big\|_{L_p(\O)}\,.$$
   Thus $X$ is of Luzin type $1$ relative to $\{P_t\}_{t>0}$, hence also relative to $\{T_t\}_{t>0}$ by virtue of Remark~\ref{Poisson vs Heat}. Passing to duality by means of Theorem~\ref{dual}, we see that  $X$ is of Luzin cotype $\8$ relative to $\{T_t\}_{t>0}$ under the analyticity assumption of $\{T_t\}_{t>0}$ on $L_p(\O; X)$.
\end{proof}

The following formulation of the Littlewood-Paley function $\mathcal{G}^P_q(f)$ by  discrete lacunary differences is of interest in its own right.

 \begin{thm}\label{LPS-diff}
 Let $X$ be a Banach space, $1\le q\le\8$ and $a>1$.
 \begin{enumerate}[\rm(i)]
  \item $X$ is of Luzin cotype $q$ relative to $\{P_t\}_{t> 0}$ iff  there exists a constant $c$ such that
  \beq\label{difference-cotype}
  \Big\|\Big(\sum_{k\in\ent}\big\|(P_{a^kt}- P_{a^{k+1}t})(f)\big\|_X^q\Big)^{\frac1q}\Big\|_{L_p(\O)}\le c\big\|f\big\|_{L_p(\O;X)}
  \eeq
 for all $1\le t\le a$ and $f\in L_p(\O;X)$. Moreover, the best $c$ and $\mathsf{L}^P_{\cc, q,p}(X)$ are linked by
  $$(\log a)^{-\frac1{q'}} c\le \mathsf{L}^P_{\cc, q, p}(X) \les (\log a)^{\frac1q} \,\frac{a+1}{a-1}\,\max((p')^{1-\frac{p}{q}},\, p^{1-\frac{p'}{q'}})c.$$
 \item $X$ is of Luzin type $q$ relative to $\{P_t\}_{t> 0}$ iff  there exists a constant $c$ such that
  \beq\label{difference-type}
  \big\|f-\mathsf F(f)\big\|_{L_p(\O;X)}\le c\Big\|\Big(\int_1^a\sum_{k\in\ent}\big\|(P_{a^kt}- P_{a^{k+1}t})(f)\big\|_X^qdt\Big)^{\frac1q}\Big\|_{L_p(\O)}
  \eeq
   for all $f\in L_p(\O;X)$. Moreover, the best $c$ and $\mathsf{L}^P_{\tt, q, p}(X)$ are linked by
  $$ \frac{a-1}{a+1}\big(\max((p')^{1-\frac{p}{q}},\, p^{1-\frac{p'}{q'}})\big)^{-1} c\
  \les \mathsf{L}^P_{\tt, q, p}(X) \les (a-1)^{\frac1q}(\log a)^{\frac1{q'}} c.$$
 \item Similar statements hold for $\{T_t\}_{t> 0}$ under the additional assumption that $\{T_t\}_{t> 0}$ be analytic on $L_p(\O;X)$ and $1<q<\8$.
  \end{enumerate}
 \end{thm}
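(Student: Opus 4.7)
The easy half of (i), giving $(\log a)^{-1/q'}c\le\mathsf{L}^{P}_{c,q,p}(X)$, proceeds via the fundamental theorem of calculus and H\"older. Writing $(P_{a^kt}-P_{a^{k+1}t})(f)=-\int_{a^kt}^{a^{k+1}t} r\partial P_r(f)\,\tfrac{dr}{r}$ and applying H\"older with exponents $q$ and $q'$ on an interval of $\tfrac{dr}{r}$-measure $\log a$, then summing over $k\in\ent$ using that the intervals $\{[a^kt,a^{k+1}t]\}_k$ are disjoint and tile $(0,\infty)$, and taking $L_p(\O)$-norms, I obtain $\bigl\|\bigl(\sum_k\|(P_{a^kt}-P_{a^{k+1}t})(f)\|_X^q\bigr)^{1/q}\bigr\|_p\le(\log a)^{1/q'}\|\mathcal{G}^P_q(f)\|_p$, which is the desired bound.

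For the reverse inequality in (i), the plan is to invert this relation through holomorphic functional calculus of $\sqrt A$. Set $F(z)=-ze^{-z}$ and $L(z)=e^{-z}-e^{-az}$, both in $H^\infty_0(\Sigma_\beta)$; Frullani's integral gives $\int_0^\infty L(z)\,\tfrac{dz}{z}=\log a$, so the resolution of the identity \eqref{resolution} applied to $\sqrt A$ with $\psi=L/\log a$ yields $f-\mathsf F(f)=\tfrac{1}{\log a}\int_0^\infty (P_u-P_{au})(f)\,\tfrac{du}{u}$. Applying $F(s\sqrt A)=s\partial P_s$ inside the integral and invoking the $\ell_q$-sectoriality of $\sqrt A$ from Proposition~\ref{A}, together with the contour-integral argument used in the proof of Theorem~\ref{proj}(i) (now pairing $F$ with $L$ in place of $F$ with $F$), I would establish the McIntosh-type quadratic estimate
\[
\|\mathcal{G}^P_q(f)\|_p\lesssim \max(p'^{1-p/q},p^{1-p'/q'})\,\Bigl\|\Bigl(\int_0^\infty \|(P_u-P_{au})(f)\|_X^q\,\tfrac{du}{u}\Bigr)^{1/q}\Bigr\|_p.
\]
Fubini with the change of variable $u=a^kt$ then rewrites the right-hand side as $\bigl\|\bigl(\int_1^a \sum_k \|(P_{a^kt}-P_{a^{k+1}t})(f)\|_X^q\,\tfrac{dt}{t}\bigr)^{1/q}\bigr\|_p$, and Minkowski's integral inequality in $L_{p/q}(\O)$, combined with the discrete hypothesis applied for each $t\in[1,a]$, extracts the factor $(\log a)^{1/q}c\|f\|_p$.

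Part (ii) is the type analog of (i), and is obtained either by running the same plan with the projection $\mathcal T$ of Theorem~\ref{proj}(ii) replacing $\mathcal P$, or more economically by dualizing (i) through the Lusin cotype/type duality of Theorem~\ref{dual} applied to $(X^*,q',p')$; the change from $\tfrac{dt}{t}$ to $dt$ in \eqref{difference-type} is what produces the factors $a^{-1/q}$ and $\tfrac{a+1}{a-1}$ in the quantitative bounds. Part (iii) for $\{T_t\}_{t>0}$ itself is proved by the same arguments with $\sqrt A$ replaced throughout by $A$ and with the $\ell_q$-sectoriality furnished by Proposition~\ref{T-analyticity} under the analyticity hypothesis, Theorem~\ref{proj}(ii) supplying the quantitative quadratic estimate in place of Theorem~\ref{proj}(i).

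The main technical obstacle is establishing the McIntosh-type quadratic estimate in the reverse direction of (i) with the sharp constant $\max(p'^{1-p/q},p^{1-p'/q'})$, and in particular making the argument work uniformly in $p$ and $q$ (notably for $p<q$, where the straightforward Minkowski integral inequality in $L_{p/q}$ is unavailable); routing this through the projection $\mathcal P$ of Theorem~\ref{proj} appears to be the cleanest way, since that theorem already packages the relevant $\ell_q$-boundedness with the correct $p$-dependence in both regimes.
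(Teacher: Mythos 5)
Your easy direction of (i) (fundamental theorem of calculus plus H\"older, giving $(\log a)^{-1/q'}c\le\mathsf L^P_{c,q,p}(X)$) is exactly the paper's.

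For the hard direction of (i) your route is genuinely different. You apply the resolution of the identity with $\psi=L/\log a$, $L(z)=e^{-z}-e^{-az}$, then pair $F=-ze^{-z}$ with $L$ in the contour-integral machinery — in effect invoking Proposition~\ref{mac} with $\f=F$ and $\psi=L$ for $B=\sqrt A$. The paper instead uses the Hyt\"onen--Naor telescoping: writing $b=\frac{a+1}{2}$ and $\partial P_t=\sum_{k\ge0}\partial P_{b^k t/2}\big(P_{b^kt/2}-P_{ab^kt/2}\big)$, a change of variable and geometric summation give the factor $\frac{2(a+1)}{a-1}$, and then only the $\el_q$-boundedness of $\{t\partial P_t\}$ (i.e.\ Propositions~\ref{lq-analyticity}, \ref{A}) is used; no contour integral is reopened. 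Both are legitimate; the paper's is slightly more elementary. Your approach reuses Proposition~\ref{mac} wholesale, but you must track the $a$-dependence of the constant there: with $\psi=L$ one has $\int_0^\infty L(t)^2\tfrac{dt}{t}=\log\frac{(a+1)^2}{4a}\sim\frac{(a-1)^2}{4}$ as $a\to1$ and $C_{L,\gamma}\sim a-1$, so the ratio $C_{L,\gamma}/|\int L^2\tfrac{dt}{t}|\sim(a-1)^{-1}$, which matches the paper's $\frac{a+1}{a-1}$; this should be made explicit. (Note also you wrote the resolution with $\psi$ linear, $\int\psi=1$; Proposition~\ref{mac}'s proof is written with $\psi^2$ and $\int\psi^2$, so either quote that proposition as a black box or redo the contour computation consistently.)

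There is one genuine gap in your write-up. In the last step you pass from $\big\|\big(\int_1^a\sum_k\|(P_{a^kt}-P_{a^{k+1}t})f\|_X^q\tfrac{dt}t\big)^{1/q}\big\|_p$ to the hypothesis \eqref{difference-cotype} by ``Minkowski's integral inequality in $L_{p/q}(\Omega)$''; this is only legitimate when $p\ge q$. In the final paragraph you acknowledge the $p<q$ obstruction but suggest that routing through Theorem~\ref{proj} fixes it; that is a conflation of two distinct steps. Theorem~\ref{proj} (i.e.\ the $\el_q$-sectoriality and the McIntosh-type bound) controls the passage from $\mathcal G^P_q$ to $\|(P_t-P_{at})f\|_{L_p(L_q)}$, uniformly in $p,q$, and does so cleanly; it has nothing to do with the subsequent reduction of the $t$-integral to a single value of $t\in[1,a]$. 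The paper handles this last reduction by observing that $t\mapsto\big(\sum_k\|(P_{a^kt}-P_{a^{k+1}t})f\|_X^q\big)^{1/q}$ is continuous from $[1,a]$ into $L_p(\Omega)$ and selecting a $t_0\in[1,a]$ by a mean-value type argument. You need a step of this kind (or to restrict to $p\ge q$ explicitly); Minkowski alone does not close the argument.

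For part (ii), your ``more economical'' route through Theorem~\ref{dual} is not quite right: that theorem dualizes Lusin cotype of $X$ to Lusin type of $X^*$, but it does not convert the discrete cotype inequality \eqref{difference-cotype} for $X^*$ into the discrete type inequality \eqref{difference-type} for $X$ — an extra duality at the level of the difference functionals would be needed, and that is not provided. The paper instead derives (ii) directly from the two-sided comparison, extracted from the proof of (i),
\[
\frac{a-1}{a+1}\big(\max(p'^{1-p/q},p^{1-p'/q'})\big)^{-1}\,\big\|\mathcal G^P_q(f)\big\|_p
\lesssim \Big\|\Big(\int_1^a\sum_k\|(P_{a^kt}-P_{a^{k+1}t})f\|_X^q\,dt\Big)^{1/q}\Big\|_p
\le a^{1/q}(\log a)^{1/q'}\big\|\mathcal G^P_q(f)\big\|_p,
\]
from which \eqref{LPS-type} is equivalent to \eqref{difference-type} with the stated relation of constants. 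Your first-mentioned option for (ii), ``running the same plan with $\mathcal T$ replacing $\mathcal P$,'' is also slightly off: (ii) is about $\{P_t\}$, so $\mathcal P$ is still the right projection; $\mathcal T$ and Theorem~\ref{proj}(ii) enter only in part (iii). Your sketch of (iii) is consistent with the paper.
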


 \begin{proof}
  (i) We have
   $$\big\|(P_{a^kt} - P_{a^{k+1}t})(f)\big\|_X^q=\Big\|\int_{a^kt}^{a^{k+1}t}\partial P_s(f) ds\Big\|_X^q
   \le(\log a)^{\frac{q}{q'}} \int_{a^kt}^{a^{k+1}t}\big\|s\partial P_s(f)\big\|_X^q\frac{ds}s\,,$$
which implies
  $$
   \sum_{k\in\ent}\big\|(P_{a^kt}- P_{a^{k+1}t})(f)\big\|_X^q\le (\log a)^{\frac{q}{q'}} \mathcal{G}^P_q(f)^q\,.
   $$
 Thus if $X$ is of Luzin cotype $q$ relative to $\{P_t\}_{t> 0}$, then \eqref{difference-cotype} holds with
  $c\le (\log a)^{\frac{1}{q'}}\mathsf{L}^P_{c, q, p}(X)\,.$

 To show the converse implication,  let $b=\frac12(1+a)$.  We use an idea from \cite{HN} (see also \cite{LP2}) to write
 $$
 \partial P_t
 =\sum_{k=0}^\8\big(\partial P_{b^{k}t}- \partial P_{b^{k+1}t}\big)
 =\sum_{k=0}^\8\partial P_{b^{k}2^{-1}t}\big(P_{b^{k}2^{-1}t}- P_{ab^{k}2^{-1}t}\big) .
 $$
Hence
  \begin{align*}
  \big\|\mathcal{G}^P_q(f)\big\|_{L_p(\O)}
  &\le \sum_{k=0}^\8  \Big\|\big\{t\partial P_{b^{k}2^{-1}t}\big(P_{b^{k}2^{-1}t}- P_{a b^{k}2^{-1}t}\big)(f)\big\}_{t>0}\Big\|_{L_p(\O;L_q(\real_+;X))}\\
  &= 2\sum_{k=0}^\8b^{-k} \Big\|\big\{t\partial P_t\big(P_{t}-P_{at}\big)(f)\big\}_{t>0}\Big\|_{L_p(\O;L_q(\real_+;X))}\\
  &=\frac{2(a+1)}{a-1}\Big\|\big\{t\partial P_t\big(P_{t}-P_{at}\big)(f)\big\}_{t>0}\Big\|_{L_p(\O;L_q(\real_+;X))}\,.
  \end{align*}
By Propositions~\ref{lq-analyticity} and \ref{A}, the family $\{t\partial P_t\}_{t>0}$ is $\el_q$-bounded with constant $C\max((p')^{1-\frac{p}{q}},\, p^{1-\frac{p'}{q'}})$. Therefore,
 $$\Big\|\big\{t\partial P_t\big(P_{t}-P_{at}\big)(f)\big\}_{t>0}\Big\|_{L_p(\O;L_q(\real_+;X))}\les\max((p')^{1-\frac{p}{q}},\, p^{1-\frac{p'}{q'}})\Big\|\big\{\big(P_{t}-P_{at}\big)(f)\big\}_{t>0}\Big\|_{L_p(\O;L_q(\real_+;X))}\,.$$
To estimate the norm on the right hand side, we write
 \begin{align*}
 \int_0^\8\big\|\big(P_{t}-P_{at}\big)(f)\big\|^q_X\,\frac{dt}t
 &=\sum_{k\in\ent} \int_{a^k}^{a^{k+1}}\big\|\big(P_{t}-P_{at}\big)(f)\big\|^q_X\,\frac{dt}t\\
 &=\int_1^{a}\sum_{k\in\ent} \big\|\big(P_{a^kt}-P_{a^{k+1}t}\big)(f)\big\|^q_X\,\frac{dt}t\,.
   \end{align*}
 Note that the function
  $$t\mapsto \Big(\sum_{k\in\ent} \big\|\big(P_{a^kt}-P_{a^{k+1}t}\big)(f)\big\|^q_X\Big)^{\frac1q}$$
is continuous from $\real_+$ to $L_p(\O)$, so there exists  $t_0\in[1,\, a]$ such that
  $$\int_1^{a}\sum_{k\in\ent} \big\|\big(P_{a^kt}-P_{a^{k+1}t}\big)(f)\big\|^q_X\,\frac{dt}t\le (\log a)\sum_{k\in\ent} \big\|\big(P_{a^kt_0}-P_{a^{k+1}t_0}\big)(f)\big\|^q_X\,.$$
We then deduce the Luzin cotype $q$ of $X$ from \eqref{difference-cotype}  with
 $$\mathsf{L}^P_{\cc, q, p}(X)\les(\log a)^{\frac1q}\,\frac{a+1}{a-1}\,\max((p')^{1-\frac{p}{q}},\, p^{1-\frac{p'}{q'}})c.$$

(ii)  The above argument yields the following discretization of $\mathcal{G}^P_q(f)$:
  \begin{align*}
  \frac{a-1}{a+1}&\big(\max((p')^{1-\frac{p}{q}},\, p^{1-\frac{p'}{q'}})\big)^{-1}\big\|\mathcal{G}^P_q(f)\big\|_{L_p(\O)}\\
  \;\;\les\Big\|&\Big(\int_1^{a}\sum_{k\in\ent}\big\|\big(P_{a^kt}-P_{a^{k+1}t}\big)(f)\big\|^q_X\,dt\Big)^{\frac1q}\Big\|_{L_p(\O)}\\
  \;\;&\le (a-1)^{\frac1q}(\log a)^{\frac1{q'}} \big\|\mathcal{G}^P_q(f)\big\|_{L_p(\O)}\,.
   \end{align*}
This immediately implies assertion (ii).

(iii) is proved similarly by virtue of Propositions~\ref{lq-analyticity} and \ref{T-analyticity}.
  \end{proof}

We have seen that the proofs of Theorem~\ref{dual} and Theorem~\ref{proj} are based on functional calculus for the special function $F(z)=-ze^{-z}$. It is known that functional calculus allows us to use more general functions.

\begin{definition}\label{g-function for B}
 Let $B$ be an $\el_q$-sectorial operator of type $\a$ on $L_p(\O;X)$ with $\a<\pi$. Let $\b>\a$ and $\f\in H^\8_0(\Sigma_\b)$ be a nonzero function. Define
  $$\mathcal{G}^B_{q, \f}(f)=\left(\int_0^\8\big\|\f(tB)(f)\big\|_X^q\,\frac{dt}t\right)^{\frac1q}\,, \quad  f\in L_p(\O; X).$$
\end{definition}

The following result is a variant of  \cite[Theorem~5]{MY} (see also \cite[Lemma~20]{LP2}).

\begin{prop}\label{mac}
Let $\f$ and $\psi$ be two  nonzero functions in $H^\8_0(\Sigma_\b)$. Then
 $$\big\|\mathcal{G}^B_{q,\f}(f)\big\|_{L_p(\O)}\le C_{B,q, \f,\psi} \big\|\mathcal{G}^B_{q,\psi}(f)\big\|_{L_p(\O)}\,,  \quad  f\in L_p(\O; X).$$
 \end{prop}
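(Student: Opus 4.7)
\textbf{Proof plan for Proposition~\ref{mac}.}

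The strategy is the standard McIntosh reproducing-formula argument adapted to the $\ell_q$-bounded setting established in Section~\ref{boundedness}. By \eqref{kernel decomposition}-style reasoning, any $f\in L_p(\O;X)$ decomposes as $f_0+f_1$ with $Bf_0=0$ and $f_1\in\overline{\mathrm{im}\,B}$; since $\f,\psi\in H^\infty_0(\Si_\b)$ vanish on $\ker B$, both $\G^B_{q,\f}(f_0)$ and $\G^B_{q,\psi}(f_0)$ are zero, so we may assume $f\in\overline{\mathrm{im}\,B}$.

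First I would choose $\wt\psi\in H^\infty_0(\Si_\b)$ with $\int_0^\8 \wt\psi(r)\psi(r)\,\frac{dr}{r}=1$ (always possible, e.g.\ a scalar multiple of $\psi$ itself), and apply the resolution of identity \eqref{resolution} with the auxiliary function $\wt\psi\cdot\psi$:
\beqn
f=\int_0^\8 \wt\psi(sB)\psi(sB)(f)\,\frac{ds}{s}.
\eeqn
Applying $\f(tB)$ and changing variables $s=tu$ produces the key identity
\beq\label{planrep}
\f(tB)(f)=\int_0^\8 \Phi_u(tB)\bigl[\psi(tuB)(f)\bigr]\,\frac{du}{u},\qquad \Phi_u(z):=\f(z)\wt\psi(uz).
\eeq

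Next I would take the $L_q(\frac{dt}{t};X)$-norm of \eqref{planrep} and apply Minkowski's integral inequality, then take $L_p(\O)$; this yields
\beqn
\bigl\|\G^B_{q,\f}(f)\bigr\|_{L_p(\O)}\le \int_0^\8 \left\|\left(\int_0^\8 \bigl\|\Phi_u(tB)[h^u_t]\bigr\|_X^q\,\frac{dt}{t}\right)^{\!\frac1q}\right\|_{L_p(\O)} \frac{du}{u},
\eeqn
where $h^u_t=\psi(tuB)(f)$. For each fixed $u$, the inner norm is controlled by the $\el_q$-boundedness constant $M(u)$ of the family $\{\Phi_u(tB):t>0\}$ times $\bigl\|\bigl(\int_0^\8\|h^u_t\|_X^q\,\frac{dt}{t}\bigr)^{1/q}\bigr\|_{L_p(\O)}$, and a change of variable $s=tu$ identifies this last quantity with $\bigl\|\G^B_{q,\psi}(f)\bigr\|_{L_p(\O)}$ (the measure $\frac{dt}{t}$ being scale-invariant).

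It remains to bound $M(u)$ and integrate. Writing $\Phi_u(tB)=\frac{1}{2\pi\i}\int_\Ga \Phi_u(t\mu)(\mu-B)^{-1}d\mu$ on a sectorial contour $\Ga\subset\Si_\b\setminus\overline{\Si_\g}$ with $\a<\g<\b$, rescaling via $\l=t\mu$, and invoking the $\el_q$-sectoriality of $B$ together with Remark~\ref{conv}, I get
\beqn
M(u)\le C_{B,q,\g}\int_\Ga |\f(\mu)\wt\psi(u\mu)|\,\frac{|d\mu|}{|\mu|}.
\eeqn
The $H^\infty_0$ decay estimates $|\f(z)|,|\wt\psi(z)|\les |z|^\d/(1+|z|^{2\d})$ convert this into a Mellin convolution on $\real_+$, giving $M(u)\les \min(u^\d,u^{-\d})$, whence $\int_0^\8 M(u)\,\frac{du}{u}<\8$. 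This closes the estimate with $C_{B,q,\f,\psi}\les C_{B,q,\g}\cdot\d^{-1}$ (times constants depending on the $H^\infty_0$ bounds of $\f,\wt\psi$).

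The only genuinely delicate point is the $u$-uniform extraction of the $\el_q$-boundedness constant $M(u)$: one must verify that the contour integral representation of $\Phi_u(tB)$ is absolutely convergent in $B(L_p(\O;X))$ and that the convex-hull stability in Remark~\ref{conv} applies with constant exactly $\int_\Ga|\Phi_u(t\mu)|\frac{|d\mu|}{|\mu|}$ independently of $t$; this is where the scale-invariance of $\frac{d\mu}{|\mu|}$ on $\Ga$ is essential, and where the $H^\infty_0$ hypothesis (rather than mere $H^\infty$) is used to guarantee integrability at $0$ and $\infty$.
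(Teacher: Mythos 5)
Your proof is correct and reaches the same constant up to normalization, but it is organized along a genuinely different route from the paper's. The paper reduces Proposition~\ref{mac} to the machinery of Theorem~\ref{proj}: it writes $\f(sB)(f)=\frac1{2a\pi\mathrm i}\int_\Ga\int_0^\8\f(sz)\psi(tz)\,zR(z)(\psi(tB)f)\,\frac{dt}{t}\,\frac{dz}{z}$ with $h_t=\psi(tB)(f)$, and then factors the resulting map as $\Phi_2\Phi\Phi_1$ through the intermediate space $L_p(\O;L_q(\Ga;X))$, using H\"older on the $t$-integral for $\Phi_1,\Phi_2$ and the $\el_q$-boundedness of $\{zR(z):z\in\Ga\}$ for the middle factor $\Phi$; this keeps the argument structurally parallel to the projection theorem. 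You instead rescale the reproducing identity to pull out a convolution parameter $u$, apply Minkowski in $u$, and then for each fixed $u$ apply $\el_q$-boundedness to the full family $\{\Phi_u(tB)\}_{t>0}$, closing with the Mellin/Schur bound $\int_0^\8 M(u)\,\frac{du}{u}\les\d^{-1}$. This is the classical McIntosh--Yagi quadratic-estimate-transfer argument transported into the $\el_q$ framework. Both yield $C_{B,q,\f,\psi}\approx C_{B,\g}\cdot C_{\f,\g}\cdot C_{\wt\psi,\g}$; the paper's factorization buys a unified treatment with Theorem~\ref{proj} (which is reused for duality elsewhere), while yours is more self-contained and closer to the textbook presentation. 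Two small remarks: (a) the reduction to $f\in\overline{\mathrm{im}\,B}$ that you make at the outset is exactly what the paper assumes implicitly, since the notion of Lusin cotype relative to $B$ is only tested on $\overline{\mathrm{im}\,B}$; strictly speaking, the topological decomposition $L_p(\O;X)=\ker B\oplus\overline{\mathrm{im}\,B}$ needs $X$ reflexive, so it is cleaner to simply restrict the claim to $f\in\overline{\mathrm{im}\,B}$ as the paper's definition does, rather than assert the splitting of a general $f$. (b) Your "delicate point" about absolute convergence and the $t$-uniformity of the absolutely-convex-hull weight is the right thing to single out; it is precisely what the paper hides inside the change of variables $\l=t\mu$ and Remark~\ref{conv}.
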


\begin{proof}
Let
 $$a=\int_0^\8\psi(t)^2\,\frac{dt}t\,.$$
Then
 $$1=\frac1a\int_0^\8\psi(tz)^2\,\frac{dt}t\,,\quad z\in\Si_\b.$$
Combined with  \eqref{resolution}, this  implies
 $$f=\frac1a\int_0^\8\psi(tB)^2(f)\,\frac{dt}t\,,\quad f\in\overline{{\rm im} B},$$
 whence
  $$\f(sB)(f)=\frac1a\int_0^\8\f(sB)\psi(tB)\big(\psi(tB)(f)\big)\,\frac{dt}t\,,\quad s>0.$$
Let $\a<\g<\b$ and $\Ga$ be the boundary of $\Si_\g$. We then deduce
 $$\f(sB)(f)=\frac1{2a\pi{\rm i}}\int_\Ga\int_0^\8\f(sz)\psi(tz)z(z-B)^{-1}\big(\psi(tB)(f)\big)\,\frac{dt}t\,\frac{dz}z\,.$$
These equalities are the analogues of \eqref{proj-fc} and \eqref{proj-int} with $h_t=\psi(tB)(f)$. It remains to repeat the proof of Theorem~\ref{proj} to conclude
 $$\big\|\mathcal{G}^B_{q,\f}(f)\big\|_{L_p(\O)}
 \le \frac1{2|a|\pi} \,C_{B,\g}C_{\f,\g}C_{\psi,\g}\big\|\mathcal{G}^B_{q,\psi}(f)\big\|_{L_p(\O)}\,,$$
where $C_{B,\g}$ is the $\el_q$-boundedness constant of $\big\{z(z-B)^{-1}: z\in\Ga\setminus\{0\}\big\}$,
 $$C_{\f,\g} =\max_{\e=\pm1}\int_0^\8|\f(te^{{\rm i}\e\g})|\,\frac{dt}t$$
 and $C_{\psi,\g}$ is similarly defined.
\end{proof}

In particular, combining the previous proposition with the results in section~\ref{boundedness}, we obtain the following

\begin{cor}\label{heat LPS-f}
  Let $X$ be a Banach space and $1\le q\le\8$.
  \begin{enumerate}[\rm(i)]
  \item $X$ is of Luzin cotype $q$ relative to $\{P_t\}_{t>0}$ iff for every nonzero $\f\in H_0^\8(\Si_\b)$ with $\b>\pi/4$ $($equivalently, for some nonzero $\f\in H_0^\8(\Si_\b))$ there exists a constant $c$ such that
   $$\big\|\mathcal{G}^{\sqrt A}_{q,\f}(f)\big\|_{L_p(\O)}\le c\,\big\|f\big\|_{L_p(\O; X)}\,,\quad f\in L_p(\O; X).$$
   \item $X$ is of Luzin type $q$ relative to $\{P_t\}_{t>0}$ iff for every nonzero $\f\in H_0^\8(\Si_\b)$ with $\b>\pi/4$ $($equivalently, for some nonzero $\f\in H_0^\8(\Si_\b))$ there exists a constant $c$ such that
   $$\big\|f-\mathsf{F}(f)\big\|_{L_p(\O; X)}\le c\,\big\|\mathcal{G}^{\sqrt A}_{q,\f}(f)\big\|_{L_p(\O)}\,,\quad f\in L_p(\O; X).$$
 \item Similar statements hold for $\{T_t\}_{t>0}$ when $\{T_t\}_{t>0}$ is analytic on $L_p(\O; X)$ and $1<q<\8$.
   \end{enumerate}
 \end{cor}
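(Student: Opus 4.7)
The plan is to derive Corollary~\ref{heat LPS-f} directly from Proposition~\ref{mac} by identifying the Littlewood--Paley function $\mathcal{G}^P_q$ (respectively $\mathcal{G}^T_q$) with a McIntosh $g$-function associated to the $\el_q$-sectorial operator $\sqrt{A}$ (respectively $A$) via the particular symbol $F(z)=-ze^{-z}$.

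First I would observe that $F\in H^\infty_0(\Si_\b)$ for every $0<\b<\frac\pi2$, since $|F(z)|\lesssim |z|/(1+|z|)^2$ uniformly in any such sector. Because $P_t=e^{-t\sqrt A}$, the functional calculus gives $t\partial P_t=F(t\sqrt A)$, hence $\mathcal{G}^P_q(f)=\mathcal{G}^{\sqrt A}_{q,F}(f)$; analogously $\mathcal{G}^T_q(f)=\mathcal{G}^A_{q,F}(f)$. By Proposition~\ref{A}, $\sqrt A$ is $\el_q$-sectorial of type $\frac\pi4$, so Proposition~\ref{mac} with $B=\sqrt A$ applies to any nonzero $\f\in H^\infty_0(\Si_\b)$ with $\b>\frac\pi4$. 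Applying this proposition once in each direction (with the roles of $\psi$ and $\f$ exchanged) yields the two-sided equivalence
\begin{equation*}
\|\mathcal{G}^{\sqrt A}_{q,\f}(f)\|_{L_p(\O)}\;\approx\;\|\mathcal{G}^P_q(f)\|_{L_p(\O)},\quad f\in L_p(\O;X),
\end{equation*}
with implicit constants depending only on $p,q,X,\f$. This comparison is exactly what the corollary requires: the Lusin cotype $q$ inequality \eqref{LPS-cotype} for $\{P_t\}_{t>0}$ transfers word for word to the inequality for $\mathcal{G}^{\sqrt A}_{q,\f}$, and vice versa; the same dichotomy holds for Lusin type $q$ \eqref{LPS-type} by reversing inequalities. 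This proves (i) and (ii).

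For (iii), the analyticity assumption on $\{T_t\}_{t>0}$ together with Proposition~\ref{T-analyticity} shows that $A$ itself is $\el_q$-sectorial of type $\a_q=\frac\pi2-\b_q<\frac\pi2$ on $L_p(\O;X)$ whenever $1<q<\infty$. Repeating the preceding step with $B=A$ and $\psi=F$ in Proposition~\ref{mac} gives
\begin{equation*}
\|\mathcal{G}^{A}_{q,\f}(f)\|_{L_p(\O)}\;\approx\;\|\mathcal{G}^T_q(f)\|_{L_p(\O)}
\end{equation*}
for every nonzero $\f\in H^\infty_0(\Si_\b)$ with $\b>\a_q$, and the same reformulation of Lusin type and cotype goes through.

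The only point requiring attention is the treatment of the fixed point subspace $\mathsf F(L_p(\O;X))=\ker\sqrt A=\ker A$, on which $\f(t\sqrt A)(f)=0$ for $\f\in H^\infty_0$ since $\f$ has limit $0$ at the origin. Thanks to the splitting $L_p(\O;X)=\mathsf F(L_p(\O;X))\oplus\ker\mathsf F$ from \eqref{decomposition} and the identification $\ker\mathsf F=\overline{\mathrm{im}\,\sqrt A}=\overline{\mathrm{im}\,A}$ from \eqref{kernel decomposition}, the comparison of $g$-functions on $\overline{\mathrm{im}\,\sqrt A}$ (where Proposition~\ref{mac} effectively lives through \eqref{resolution}) extends to all of $L_p(\O;X)$ after replacing $f$ by $f-\mathsf F(f)$ on the right-hand side, which is precisely the form used in the definitions of Lusin type and cotype. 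Apart from this small bookkeeping, the entire corollary is a direct transcription of Proposition~\ref{mac}, so I anticipate no substantive obstacle; the main ``work'' is simply matching generators and sectors correctly for the Poisson and the heat case.
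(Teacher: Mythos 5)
Your proof is correct and follows essentially the same path as the paper: the corollary is exactly the specialization of Proposition~\ref{mac} to $B=\sqrt A$ (resp.\ $B=A$) with $\psi=F(z)=-ze^{-z}$, using Proposition~\ref{A} (resp.\ Proposition~\ref{T-analyticity}) to verify the required $\ell_q$-sectoriality, and the identity $t\partial P_t=F(t\sqrt A)$ (resp.\ $t\partial T_t=F(tA)$) to identify $\mathcal{G}^P_q$ (resp.\ $\mathcal{G}^T_q$) with the corresponding McIntosh $g$-function. The bookkeeping with $\mathsf F$ and $\overline{\mathrm{im}\,\sqrt A}$ that you flag at the end is also the intended reading.
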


We conclude this section by some remarks on general $\el_q$-sectorial operators on $L_p(\O;X)$ for which we  have defined the $g$-function in Definition~\ref{g-function for B}. In fact, what we have done so far for semigroups can be developed for these operators too.

\begin{definition}
 Let $B$ be an $\el_q$-sectorial operator of type $\a$ on $L_p(\O;X)$ with $\a<\pi$.
   \begin{enumerate}[\rm (i)]
   \item $X$ is said to be of {\it Luzin cotype $q$ relative to} $B$ if there exists a constant $c$ such that
 $$
 \big\|\mathcal{G}^B_{q,\f}(f)\big\|_{L_p(\O)}\le c\big\|f\big\|_{L_p(\O;X)}
 $$
for every $f\in \overline{{\rm im} B}$ and some nonzero $\f\in H_0^\8(\Si_\b)$ with $\b>\a$.
\item $X$ is said to be of {\it Luzin type $q$ relative to} $B$ if there exists a constant $c$ such that
 $$
\big\|f\big\|_{L_{p}(\O;X)} \le c\big\|\mathcal{G}^B_{q,\f}(f)\big\|_{L_{p}(\O)} 
 $$
for every $f\in \overline{{\rm im} B}$ and some nonzero $\f\in H_0^\8(\Si_\b)$ with $\b>\a$.
  \end{enumerate}
\end{definition}

Proposition~\ref{mac} shows that the above definition is independent of the choice of $\f$.  Assume additionally that $B$ admits a dual operator $B'$ on $L_{p'}(\O; X^*)$ in the sense of \cite{CDMY}, namely,
 $$\la B(f),\,g\ra=\la f,\,B'(g)\ra, \quad f\in{\rm Dom} B, \;g\in{\rm Dom} B'.$$
Assume further that  $B'$ is also $\el_{q'}$-sectorial of type $\a$ on $L_{p'}(\O;X^*)$.

\begin{prop}
 Under the above assumption, $X$ is of Luzin cotype $q$ relative to $B$ iff $X^*$ is of Luzin type $q'$ relative to $B'$.
\end{prop}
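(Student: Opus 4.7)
My plan is to mimic the proof of Theorem~\ref{dual} (i), replacing the specific function $F(z)=-ze^{-z}$ and the subordinated Poisson semigroup by a general $\f\in H^\8_0(\Si_\b)$ and the operator $B$, with Proposition~\ref{mac} guaranteeing that the choice of $\f$ is immaterial. First, pick $\psi\in H^\8_0(\Si_\b)$ normalised so that $\int_0^\8\psi(t)^2\frac{dt}t=1$. For $f\in\overline{{\rm im}\,B}\subset L_p(\O;X)$, the resolution of the identity \eqref{resolution} applied to $\psi^2$ gives $f=\int_0^\8\psi(tB)^2(f)\frac{dt}t$. The duality of $B$ and $B'$ together with the Cauchy integral representation of the functional calculus yields the adjoint identity $\psi(tB)^*|_{L_{p'}(\O;X^*)}=\psi(tB')$, so that for $g\in L_{p'}(\O;X^*)$ one has
\beqn
\la f,g\ra=\int_0^\8\la \psi(tB)(f),\,\psi(tB')(g)\ra\,\frac{dt}{t}.
\eeqn
A double application of H\"older (in $t$ against $\frac{dt}t$, then pointwise in $\omega$) gives
$|\la f,g\ra|\le\|\mathcal{G}^B_{q,\psi}(f)\|_{L_p(\O)}\,\|\mathcal{G}^{B'}_{q',\psi}(g)\|_{L_{p'}(\O)}$.

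For the implication cotype $\Rightarrow$ type, assume $X$ is of Lusin cotype $q$ relative to $B$. Then Proposition~\ref{mac} lets us replace $\f$ by $\psi$ in the definition, so $\|\mathcal{G}^B_{q,\psi}(f)\|_{L_p(\O)}\les\|f\|_{L_p(\O;X)}$ for $f\in\overline{{\rm im}\,B}$. Substituting into the displayed inequality and taking the supremum over $f$ in the unit ball of $\overline{{\rm im}\,B}$, we control the norm of $g$ seen as a functional on $\overline{{\rm im}\,B}$. Because $(\overline{{\rm im}\,B})^\perp=\ker B'$ under the assumed duality, and the sectoriality of $B'$ provides the splitting $L_{p'}(\O;X^*)=\ker B'\op\overline{{\rm im}\,B'}$ on the range we care about, the quotient norm of $g$ and its genuine $L_{p'}$-norm agree for $g\in\overline{{\rm im}\,B'}$. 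Invoking Proposition~\ref{mac} again to return to a generic $\f$, we obtain the Lusin type $q'$ inequality for $X^*$ relative to $B'$.

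For the reverse implication type $\Rightarrow$ cotype, I would establish the analogue of Theorem~\ref{proj} for $B'$: the map $\mathcal{T}_{B'}(h)_s=c_\f\int_0^\8\f(sB')\f(tB')(h_t)\frac{dt}t$ is bounded (in fact, a projection) on $L_{p'}(\O;L_{q'}(\real_+;X^*))$. Its proof is the verbatim factorisation $\mathcal{T}_{B'}=\Phi_2\Phi\Phi_1$ from section~\ref{Proof of Theorem proj}, in which $\Phi_1$ and $\Phi_2$ are controlled by H\"older on $\Ga$ and $\Phi(g)_z=c_\f\,zR'(z)(g_z)$ is controlled by the $\el_{q'}$-sectoriality of $B'$ on $L_{p'}(\O;X^*)$ (which is available either by direct duality of $\el_q$-boundedness or by the hypothesis made on $B'$). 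Given $f\in\overline{{\rm im}\,B}$ and $h\in L_{p'}(\O;L_{q'}(\real_+;X^*))$, the pairing
\beqn
\int_0^\8\la\f(tB)(f),h_t\ra\,\frac{dt}t=\la f,g\ra,\quad g:=\int_0^\8\f(tB')(h_t)\,\frac{dt}t,
\eeqn
combined with the projection bound $\|\mathcal{G}^{B'}_{q',\f}(g)\|_{L_{p'}(\O)}\les\|h\|$ and the Lusin type $q'$ of $X^*$ relative to $B'$, yields $\|g\|_{L_{p'}(\O;X^*)}\les\|h\|$. Taking the supremum over $h$ in the unit ball gives the Lusin cotype $q$ inequality for $X$ relative to $B$.

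The main obstacle I anticipate is the correct identification of $(\overline{{\rm im}\,B})^*$ with a suitable subspace of $L_{p'}(\O;X^*)$ and the verification that $\overline{{\rm im}\,B'}$ maps into it isometrically; without the Rota-type machinery of earlier sections this must be extracted purely from sectoriality of $B$ and $B'$ and the annihilator identity $(\overline{{\rm im}\,B})^\perp=\ker B'$. The other delicate point is justifying the adjoint formula $\psi(tB)^*=\psi(tB')$ on $L_{p'}(\O;X^*)$, i.e., that the contour integral defining $\psi(tB)$ admits termwise adjunction; here one passes the adjoint inside the Bochner integral by using that $(z-B)^{-1}$ has norm-integrable Cauchy integrand against $\psi(tz)\frac{dz}{z}$ on $\Ga$ and dualises the resolvent pointwise. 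Once these two technical matters are settled, the rest of the argument is a transparent transcription of section~\ref{Proof of Theorem proj} and of Theorem~\ref{dual}.
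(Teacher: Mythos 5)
Your proposal is correct and follows exactly the paper's own (one-line) argument: the paper simply says to transfer Theorem~\ref{proj} to the general $\el_q$-sectorial operator $B$ and repeat the duality proof of Theorem~\ref{dual}, and your write-up is precisely that, with $F(z)=-ze^{-z}$ replaced by a generic $\psi\in H^\8_0(\Si_\b)$ via Proposition~\ref{mac}. The two technical points you flag (the adjoint identity $\psi(tB)^*|_{L_{p'}(\O;X^*)}=\psi(tB')$ and the norming of $\overline{{\rm im}\,B}$ against $\overline{{\rm im}\,B'}$) are genuine but standard consequences of \cite{CDMY}-type sectorial calculus and of the bounded ergodic projection onto $\overline{{\rm im}\,B'}$ that $\el_q$-sectoriality provides.
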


\begin{proof}
 Noting that Theorem~\ref{proj} transfers to the present setting, we can repeat the proof of Theorem~\ref{dual}, and so omit the details.
\end{proof}

It would be interesting to investigate the Luzin type and cotype relative to $B$ as above. Guided by Theorems~\ref{Poisson ML} and \ref{Heat ML}, one would like to know those operators $B$ such that the Luzin type or cotype relative to $B$ is implied by the martingale type or cotype. It seems that more structure should be imposed to $B$ in order to get significant results. We have seen that this is indeed the case if $B$ is the negative generator of $\{P_t\}_{t>0}$ or $\{T_t\}_{t>0}$ on $L_p(\O; X)$.  On the other hand, we have the following proposition that is contained (essentially)  in \cite{VW}. Note that \cite{VW} can be viewed as the particular case of our discussion  where $\O$ is a singleton (the $\el_q$-boundedness then  simply becomes the usual boundedness).

The notion of type and cotype referred in the next proposition is the usual Rademacher type and cotype.

\begin{prop}
 Assume that $B$ has a bounded $H^\8$ functional calculus. If $X$ is of  cotype $($resp. type$)$ $q$, then $X$ is of Luzin cotype $($resp. Luzin type $)$ $q$ relative to $B$.
\end{prop}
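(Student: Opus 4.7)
The plan is to follow the standard randomization argument: Rademacher cotype converts the $\ell_q$-sum in the $g$-function into a Rademacher average of functional-calculus operators, and the boundedness of the $H^\infty$ calculus of $B$ handles that average uniformly. For the cotype statement, I would first discretize by the dyadic change of variables $t=2^k s$ with $s\in[1,2]$:
\[
\mathcal{G}^B_{q,\varphi}(f)^q \;=\; \int_1^2 \sum_{k\in\mathbb{Z}}\big\|\varphi(2^k s B)(f)\big\|_X^q\,\frac{ds}{s},
\]
reducing matters to a uniform-in-$s$ estimate of $\bigl\|\bigl(\sum_k\|\varphi(2^ksB)f\|_X^q\bigr)^{1/q}\bigr\|_{L_p(\Omega)}$.

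For each fixed $\omega\in\Omega$ and $s\in[1,2]$, cotype $q$ of $X$ combined with Kahane's inequality gives the pointwise bound
\[
\Big(\sum_k\|\varphi(2^ksB)f(\omega)\|_X^q\Big)^{1/q} \;\lesssim\; \Big(\mathbb{E}_\varepsilon\,\big\|\textstyle\sum_k\varepsilon_k\varphi(2^ksB)f(\omega)\big\|_X^p\Big)^{1/p},
\]
with implicit constant depending on the cotype $q$ constant of $X$, and where $(\varepsilon_k)$ are independent Rademacher signs. Taking the $p$-th power in $\omega$, integrating, and swapping $\mathbb{E}_\varepsilon$ with the $L_p(\Omega)$ integral by Fubini reduces matters to estimating, for each sign sequence $\varepsilon=(\varepsilon_k)$, the operator $m_\varepsilon(B)$ on $L_p(\Omega;X)$, where $m_\varepsilon(z)=\sum_k\varepsilon_k\varphi(2^ksz)$. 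The $H^\infty_0(\Sigma_\beta)$ decay $|\varphi(z)|\lesssim\min(|z|^\delta,|z|^{-\delta})$ combined with the dyadic geometry of $\{2^k\}$ yields $\|m_\varepsilon\|_{H^\infty(\Sigma_\beta)}\lesssim 1$ uniformly in $\varepsilon$ and $s$; the bounded $H^\infty$ calculus of $B$ then gives $\|m_\varepsilon(B)f\|_{L_p(\Omega;X)}\lesssim\|f\|_{L_p(\Omega;X)}$ with the same uniformity, and integrating back in $s$ completes the cotype case.

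For the type $q$ statement (with $1<q\le 2$), I would argue by duality: Rademacher type $q$ of $X$ forces $X$ to be $K$-convex, so $X^*$ has cotype $q'$; the bounded $H^\infty$ calculus of $B$ on $L_p(\Omega;X)$ passes to the dual operator $B'$ on $L_{p'}(\Omega;X^*)$. Applying the cotype case just proved to $(B',X^*)$ together with the reproducing formula $f=\int_0^\infty\psi(tB)\varphi(tB)f\,\frac{dt}{t}$ (for a suitable $\psi\in H^\infty_0$) and Hölder's inequality on $\mathbb{R}_+\times\Omega$ recovers the lower bound $\|f\|_{L_p(\Omega;X)}\lesssim\|\mathcal{G}^B_{q,\varphi}(f)\|_{L_p(\Omega)}$. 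The main obstacle is the Littlewood-Paley summation lemma giving the uniform $H^\infty(\Sigma_\beta)$ bound on $m_\varepsilon$ in a genuine sector (not merely on the positive real axis); once that is in hand, cotype and the functional calculus combine in a completely routine way.
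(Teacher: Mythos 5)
Your proposal is correct and follows essentially the same route as the paper: the paper invokes the Kalton--Weis unconditionality theorem (Theorems 10.4.4/10.4.6 of \cite{HVVW}, via \cite{VW}) as a black box, while your argument unpacks the relevant core of that theorem --- dyadic discretization, pointwise Rademacher cotype plus Kahane, and the uniform $H^\infty(\Sigma_\beta)$ bound $\sup_{\varepsilon,s}\big\|\sum_k\varepsilon_k\varphi(2^ks\,\cdot)\big\|_{H^\infty}\lesssim 1$ coming from the $H^\infty_0$ decay of $\varphi$. The type case by duality through $B'$ on $L_{p'}(\Omega;X^*)$ and the Calder\'on reproducing formula is likewise the standard move and matches the paper's duality machinery.
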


\begin{proof}
 As in \cite{VW}, this is a simple consequence of Kalton-Weis'  theorem on the unconditionality of bounded $H^\8$ functional calculus (see Theorem~10.4.6 or its discrete version, Theorem~10.4.4. in  \cite{HVVW}).
\end{proof}

\begin{rk}
  The assumption on the $H^\8$ functional calculus of $B$ seems too strong since it implies that $X$ is a UMD space in many cases, for instance, if $B$ is the negative generator of $\{T_t\}_{t>0}$ on $L_p(\real; X)$ when $\{T_t\}_{t>0}$ is the heat or Poisson semigroup on $\real$.
\end{rk}


\section{Dyadic martingales and singular integrals}\label{Dyadic martingales and singular integrals}


The proofs of Theorem~\ref{Poisson ML} (i) and Theorem~\ref{fML} heavily rely on tools from harmonic analysis, notably from modern real-variable Littlewood-Paley theory. This section is a preparation for using these tools.  We  will mainly follow Wilson's beautiful treatment  in \cite{Wilson08} (see also \cite{Wilson89, Wilson07})  for the part on  Littlewood-Paley theory.

\subsection{Dyadic martingales}\label{Dyadic martingales}

 All cubes in $\real^d$ considered in the sequel are bounded and with sides parallel to the axes. $|Q|$ and $\el(Q)$ denote respectively the volume and side length of a cube $Q$; $tQ$ stands for the cube with the same center as $Q$ and $\el(tQ)=t\el(Q)$ for $t>0$.

 Let $\D$ be the family of all dyadic cubes of $\real^d$, $\D_k\subset\D$ the subfamily of all cubes with side length $2^{-k}$ for $k\in\ent$. Let $\A_k$ be the $\s$-algebra generated by $\D_k$ and $\E_k$  the associated conditional expectation. For $f\in L_p(\real^d; X)$,
  $$\E_k(f)=\sum_{Q\in\D_k}\Big(\frac1{|Q|}\,\int_Qf\Big)\un_{Q}.$$
Let $d_k(f)=\E_k(f)-\E_{k-1}(f)$ and
 $$S_q(f)=\Big(\sum_{k\in\ent}\big\|d_k(f)\big\|_X^q\Big)^{\frac1q}.$$
$S_q(f)$ is the $q$-variant of the usual martingale square function of $f$. It is useful to note that $d_k(f)$ is of vanishing mean on every $Q\in\D_{k-1}$ and constant on every $R\in\D_k$.

Thus if $X$ is of martingale type $q$, then
 $$\big\|f\big\|_{L_q(\real^d; X)}\le \mathsf{M}_{\tt,q}(X) \big\|S_q(f)\big\|_{L_q(\real^d)}\,,\quad f\in L_q(\real^d; X).$$

We will need {\it dyadic-like families of cubes} that Wilson calls good families. $\F$ is such a family if
\begin{itemize}
\item[a)] for $Q\in\F$, all its  $2^d$  immediate dyadic subcubes belong to $\F$ too;
\item[b)] every $Q\in\F$ is one of the $2^d$  immediate dyadic subcubes of another one in $\F$;
\item[c)] for all $Q, R\in\F$,  $Q\subset R$, or $R\subset Q$, or $Q\cap R=\emptyset$.
\end{itemize}
For $\F$ a  dyadic-like family of cubes, we define the associated $S_{q,\F}$:
$$S_{q,\F}(f)=\Big(\sum_{Q\in\F}\,\sum_{R\in\F,\, R\subset Q,\,\el(R)=\frac{\el(Q)}2}\big\|\frac1{|R|}\,\int_Rf-\frac1{|Q|}\,\int_Qf\big\|^q_X\,\un_R\Big)^{\frac1q}.$$
 It is easy to see that given a finite number of cubes in $\F$,  we can bring the subfamily consisting of those cubes in $\F$ that are contained in one of the given cubes  to a subfamily of $\D$ after appropriate translation and rescaling. Thus we have the following.

 \begin{lem}\label{Haar-like type}
 Let $\F$ be a dyadic-like family of cubes. If $X$ is of martingale type $q$, then
 $$\big\|f\big\|_{L_q(\real^d;X)}\le \mathsf{M}_{\tt,q}(X)\big\|S_{q,\F}(f)\big\|_{L_q(\real^d)}$$
for all $f\in L_q(\real^d; X)$ supported by cubes from $\F$.
 \end{lem}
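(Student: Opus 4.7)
The plan is to transfer the dyadic martingale type $q$ inequality stated just above the lemma to the ``good family'' $\F$, by exploiting the isomorphism noted in the paragraph preceding the statement. The argument decomposes into a reduction step, a filtration construction, and a two-sided limit.

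First I would reduce to $f$ compactly supported in a single cube $Q_0 \in \F$. Condition (c) makes the relation ``having a common $\F$-ancestor'' an equivalence relation whose classes (``trees'') have pairwise disjoint underlying sets; by disjointness of supports it suffices to treat each tree separately. For compactly supported $f$ in one tree, condition (b) lets me pick $Q_0 \in \F$ with $\mathrm{supp}(f) \subset Q_0$. Now for each $k \in \ent$, let $\mathcal{A}_k$ be the $\s$-algebra generated by the (pairwise disjoint) cubes of $\F$ of side $2^{-k}$, and set $g_k = \E_k(f)$. By the observation just before the lemma, the filtration $\{\mathcal{A}_k\}_{k\ge k_0}$ restricted to $Q_0$ (with $2^{-k_0}=\el(Q_0)$) is, after an affine change of variable, exactly the standard dyadic filtration on $[0,1)^d$; while for $k<k_0$, $g_k$ equals $|Q_k|^{-1}\int_{Q_0}f$ on the $\F$-ancestor $Q_k$ of $Q_0$ and zero elsewhere, so that
\[
\|g_k\|_{L_q(\real^d;X)}=|Q_k|^{\frac1q-1}\,\Big\|\int_{Q_0}f\,\Big\|_X\to 0\quad\text{as }k\to-\infty,
\]
since $q>1$. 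Unpacking the definition of $S_{q,\F}$ yields pointwise
\[
S_{q,\F}(f)(x)^q=\sum_{k\in\ent}\|g_k(x)-g_{k-1}(x)\|_X^q\quad\text{a.e.}
\]

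Finally I apply the martingale type $q$ property to the finite martingale $(g_{-M+n}-g_{-M})_{n=0}^{M+N}$ (which starts at $0$ and has the same differences as $(g_k)$):
\[
\E\|g_N-g_{-M}\|_X^q\le \mathsf{M}_{t,q}(X)^q\sum_{k=-M+1}^N\E\|g_k-g_{k-1}\|_X^q\le \mathsf{M}_{t,q}(X)^q\|S_{q,\F}(f)\|_{L_q(\real^d)}^q.
\]
Letting $N\to\infty$ gives $g_N\to f$ in $L_q$ by dyadic martingale convergence on $Q_0$, and letting $M\to\infty$ gives $g_{-M}\to 0$ in $L_q$ by the display above; the lemma follows. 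The main technical hurdle is precisely this two-sided limit: the martingale type inequality is stated for martingales starting at $0$, so one must control $\|g_{-M}\|_{L_q}$ uniformly, for which the assumption $q>1$ and the compactness of $\mathrm{supp}(f)$ inside $Q_0$ are essential.
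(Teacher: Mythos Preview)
Your proof is correct and follows essentially the same approach as the paper's one-sentence argument: the paper merely asserts that the $\F$-subfamily below a finite collection of top cubes can be brought by translation and rescaling to a subfamily of $\D$, and then invokes the dyadic martingale type inequality displayed just before the lemma. You make this precise by carrying out the tree decomposition, constructing the $\F$-filtration explicitly, and handling the two-sided limit (in particular the vanishing of $\|g_{-M}\|_{L_q}$ as $M\to\infty$, which is exactly the detail the paper's sketch suppresses).
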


 An important case needed later concerns the family $\{3Q\}_{Q\in\D}$. The following  is due to Wilson \cite{Wilson89}.

\begin{lem}\label{3Q}
The family $\{3Q\}_{Q\in\D}$ is a disjoint union of $3^d$ dyadic-like families.
\end{lem}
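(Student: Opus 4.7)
The plan is to reduce to dimension one by a product construction and then exhibit three dyadic-like families explicitly. In one dimension, write $I_{k,m}=[m2^{-k},(m+1)2^{-k}]$ for $k,m\in\ent$, so $3I_{k,m}=[(m-1)2^{-k},(m+2)2^{-k}]$ with midpoint $(m+\tfrac12)2^{-k}$. A direct computation shows that this midpoint is the common endpoint of $3I_{k+1,2m-1}$ and $3I_{k+1,2m+2}$, and that these two tripled intervals tile $3I_{k,m}$. I take this as the notion of children inside the collection of tripled intervals, so the children map is $m\mapsto\{2m-1,2m+2\}$ and the unique parent of $3I_{k+1,m'}$ is $3I_{k,(m'+1)/2}$ when $m'$ is odd and $3I_{k,(m'-2)/2}$ when $m'$ is even. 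Reducing modulo $3$, the children of $3I_{k,m}$ satisfy: both $\equiv 1$ when $m\equiv 1$; both $\equiv 2$ when $m\equiv 0$; and both $\equiv 0$ when $m\equiv 2$. Thus residue $1$ is preserved while residues $0$ and $2$ swap at each level.

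Guided by this, I define three $1$D subfamilies. Set $\F^1_1=\{3I_{k,m}:m\equiv 1\pmod 3\}$; let $\F^1_0$ consist of all $3I_{k,m}$ with $m\equiv 0\pmod 3$ when $k$ is even and $m\equiv 2\pmod 3$ when $k$ is odd; and let $\F^1_2$ be obtained by swapping the roles of residues $0$ and $2$ in the definition of $\F^1_0$. Every $3I_{k,m}$ belongs to exactly one of these three, so they partition $\{3I:I\in\D\}$ in $1$D. Checking each $\F^1_i$ is dyadic-like reduces to routine verifications: axiom (a) is the residue computation above; axiom (b) follows from applying the same arithmetic to the parent formulas and checking, case by case, that the parent's residue matches the subfamily at the next coarser level (taking into account the level-parity swap for $\F^1_0$ and $\F^1_2$); and axiom (c) holds because at each fixed level $k$ the elements of $\F^1_i$ tile $\real$ by consecutive non-overlapping intervals of length $3\cdot 2^{-k}$ (the left endpoints $(3j+r-1)2^{-k}$ for fixed $r$ and $j\in\ent$ form such a tiling), while across levels each element is the union of its two children.

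For general $d$, each $Q\in\D_k$ factors uniquely as $\prod_{i=1}^d I^{(i)}$ with $I^{(i)}\in\D_k$, so $3Q=\prod_i 3I^{(i)}$. For $\mathbf{t}=(t_1,\dots,t_d)\in\{0,1,2\}^d$ set
\[
\F^{\mathbf{t}}=\Big\{3Q=\prod_{i=1}^d 3I^{(i)}:3I^{(i)}\in\F^1_{t_i}\text{ for each }i\Big\}.
\]
The three axioms pass through products (taking $2^d$ children coordinate-wise, parents coordinate-wise, and using that pairs of cubes are nested or disjoint coordinate-wise), so each $\F^{\mathbf{t}}$ is dyadic-like, and every tripled dyadic cube lies in exactly one $\F^{\mathbf{t}}$, giving the desired decomposition into $3^d$ families. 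The main obstacle is spotting the alternating definition of $\F^1_0$ and $\F^1_2$: naively using the residue mod $3$ alone fails because residues $0$ and $2$ swap under taking children, so only the fixed residue $1$ yields a dyadic-like family directly; compensating by also alternating with the parity of $k$ is the key idea.
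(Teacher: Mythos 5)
Your proof is correct and follows essentially the same approach as the paper: the paper's compressed formula $\bigcup_{s=0}^{2}\F^k_{2^{|k|}s\;\mathrm{mod}\;3}$ (where $\F^k_s$ collects the intervals $[\frac{3j+s}{2^k},\frac{3(j+1)+s}{2^k})$) encodes exactly the level-parity-dependent swap of the residues $0$ and $2$ that you worked out, with the fixed residue $1$ corresponding to $s=0$. You have simply unpacked the paper's one-line construction, verified the three axioms explicitly, and spelled out the reduction to $d=1$ by coordinate-wise products, which the paper leaves implicit.
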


It suffices to consider the case $d=1$. Then every $3Q$ can be written in the form $[\frac{3j+s}{2^k},\, \frac{3(j+1)+s}{2^k})$ with $j, k\in\ent$ and $s\in\{0, 1,2\}$. Let $\F^k_{s}$ be the collection of all such intervals for given $k$ and $s$. Then the desired union is
 $\cup_{s=0}^2\cup_{k\in\ent }\F^k_{2^{|k|}s\; {\rm mod}\;3}$.

\medskip

Let  $\F$ be a dyadic-like family of cubes and $\d>0$. Consider a family $\{a_Q\}_{Q\in\F}$ of $X$-valued functions satisfying the following conditions:
 \beq\label{atom}
 {\rm supp}(a_Q)\subset Q,\quad \int a_Q=0\;\text{ and }\; \|a_Q(x)-a_Q(y)\|_X\le |Q|^{-\frac1q}\,\big(\frac{|x-y|}{\el(Q)}\big)^\d.
 \eeq
These are \textit{smooth atoms}. Let $\{\l_Q\}_{Q\in\F}$ be a finite family of complex numbers and $f=\sum_{Q\in\F} \l_Q \,a_Q$.

The following is the adaptation of a lemma due to Wilson to the present setting. We include its proof for the convenience of the reader.

\begin{lem}\label{atom type}
 Under the above assumption, we have
 $$S_{q,\F}(f)\les_{d,\d}  \Big(\sum_{Q\in\F}\frac{|\l_Q|^q}{|Q|}\,\un_Q\Big)^{\frac1q}.$$
 \end{lem}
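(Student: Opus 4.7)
The plan is to prove a pointwise estimate: for every $x\in\real^d$,
$S_{q,\F}(f)(x)^q\les_{d,\d}\sum_{Q\in\F}\frac{|\l_Q|^q}{|Q|}\un_Q(x)$.
The strategy is to expand $d_R(f):=\frac1{|R|}\int_R f-\frac1{|Q|}\int_Q f$ (where $R$ is an immediate $\F$-dyadic child of $Q$) in the atomic sum $f=\sum_{Q'\in\F}\l_{Q'}a_{Q'}$, and to control each contribution using the support, mean-zero, and Hölder conditions \eqref{atom}.

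First I would classify the atoms $a_{Q'}$ by the position of $Q'$ relative to $(Q,R)$. If $Q'\cap Q=\emptyset$, then $a_{Q'}$ vanishes on $Q$ and contributes nothing. If $Q'\subsetneq Q$, then, $\F$ being dyadic-like, $\el(Q')\le\el(Q)/2=\el(R)$, so either $Q'\subset R$ or $Q'\cap R=\emptyset$; in both cases each of $\int_R a_{Q'}$ and $\int_Q a_{Q'}$ reduces to $\int a_{Q'}$ or to $0$, and the term vanishes by the mean-zero hypothesis. The remaining case $Q'\supseteq Q$ I treat uniformly by writing $d_R(a_{Q'})=\frac1{|R||Q|}\int_R\!\int_Q(a_{Q'}(x)-a_{Q'}(y))\,dy\,dx$ and applying the Hölder bound, which yields $\|d_R(a_{Q'})\|_X\les_d|Q'|^{-1/q}(\el(Q)/\el(Q'))^\d$ (the $j=k$ endpoint giving the clean bound $|Q|^{-1/q}$ when $Q'=Q$).

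Next, fix $x\in\real^d$ and let $(Q_k)$ be the chain of cubes of $\F$ containing $x$, indexed so that $\el(Q_{k+1})=\el(Q_k)/2$. The pair yielding the generation-$(k+1)$ term of $S_{q,\F}(f)(x)^q$ is $(Q_k,Q_{k+1})$, and the classification above gives
$\|d_{Q_{k+1}}(f)\|_X\les_d\sum_{j\le k}2^{\d(j-k)}|\l_{Q_j}|\,|Q_j|^{-1/q}$.
Setting $u_j=|\l_{Q_j}|\,|Q_j|^{-1/q}$ and $v_m=2^{-\d m}$ for $m\ge0$ (extended by $0$), the right-hand side equals $(v*u)(k)$. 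Young's convolution inequality $\el^1*\el^q\to\el^q$ on $\ent$ then gives
$\sum_k\|d_{Q_{k+1}}(f)\|_X^q\les_{d,\d}\|v\|_{\el^1}^q\sum_j u_j^q=\sum_{Q\in\F,\,x\in Q}\frac{|\l_Q|^q}{|Q|}$,
since $\|v\|_{\el^1}=1/(1-2^{-\d})$. Taking $q$-th roots delivers the pointwise bound asserted, and the endpoint cases $q=1$ and $q=\8$ are covered by the same Young estimate.

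The main obstacle is the combinatorial case analysis identifying exactly which atoms can produce a nonzero contribution to $d_R(f)$; once the dyadic-like nesting is used to eliminate every $Q'$ strictly inside $Q$, what remains is a standard Hölder estimate followed by a routine discrete Young/Hardy-type summation of the geometrically decaying tails from the ancestors.
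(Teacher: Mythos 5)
Your proof is correct and follows essentially the same route as the paper: you reduce to a pointwise estimate, observe that only the ancestor atoms $a_{Q'}$ with $Q'\supseteq Q$ contribute to the difference $d_R(f)$ (the paper records this as $d_k(a_Q)=0$ for $k\le k(Q)$), derive the per-atom bound $\|d_R(a_{Q'})\|_X\les_{d}|Q'|^{-1/q}(\el(Q)/\el(Q'))^\d$ from the H\"older condition in \eqref{atom}, and then sum the geometric decay in the ancestors. The only cosmetic difference is that you package this last step as the Young inequality $\el^1*\el^q\to\el^q$ on $\ent$, whereas the paper performs the equivalent estimate directly via H\"older's inequality with a split geometric weight.
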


\begin{proof}
Without loss of generality, we assume that $\F=\D$, so $S_{q,\F}(f)=S_{q}(f)$. Since $a_Q$ is of vanishing mean,  $d_k(a_Q)=0$ whenever $k\le k(Q)$, where $2^{-k(Q)}=\el(Q)$ . Let $R\in\D_{k-1}$ with $k>k(Q)$ and $R\subset Q$. Then on $R$,
 \begin{align*}
 d_k(a_Q)
 &=\sum_{I\in\D_{k},\, I\subset R} \big(\frac1{|I|}\,\int_Ia_Q-\frac1{|R|}\,\int_Ra_Q\big)\un_I\\
 &=\sum_{I\in\D_{k},\, I\subset R} \frac1{|I|\,|R|}\,\int_{I\times R}\big(a_Q(x)-a_Q(y)\big)dx\,dy\,\un_I.
 \end{align*}
 Thus by the last condition of \eqref{atom},
  $$\|d_k(a_Q)\|_X\les_{d}|Q|^{-\frac1q}\,\big(\frac{\el(R)}{\el(Q)}\big)^\d;$$
 hence on $R$,
  \begin{align*}
  \|d_k(f)\|_X
  &\les_{d}\Big(\sum_{Q: Q\supset R}\frac{|\l_Q|^q}{|Q|}\,\big(\frac{\el(R)}{\el(Q)}\big)^\d\Big)^{\frac1q}
  \Big(\sum_{Q: Q\supset R}\big(\frac{\el(R)}{\el(Q)}\big)^\d\Big)^{\frac1{q'}}\\
  &\les_{d,\d}\Big(\sum_{Q: Q\supset R}\frac{|\l_Q|^q}{|Q|}\,\big(\frac{\el(R)}{\el(Q)}\big)^\d\Big)^{\frac1q}.
   \end{align*}
 It then follows that
 \begin{align*}
 S_{q}(f)^q(x)
 &=\sum_{k\in\ent}\,\sum_{R\in\D_{k-1}} \|d_k(f)\|^q_X\,\un_R(x)\\
 &\les_{d,\d}\sum_{R: x\in R}\, \sum_{Q: Q\supset R}\frac{|\l_Q|^q}{|Q|}\,\big(\frac{\el(R)}{\el(Q)}\big)^\d\,\un_R(x)\\
 &\les_{d,\d}\sum_{Q}\frac{|\l_Q|^q}{|Q|} \;\sum_{R: x\in R\subset Q} \,\big(\frac{\el(R)}{\el(Q)}\big)^\d\,\un_R(x)\\
 &\les_{d,\d}\sum_{Q}\frac{|\l_Q|^q}{|Q|}\,\un_Q(x)\,.
  \end{align*}
This gives  the desired assertion.
\end{proof}

\subsection{Singular integrals}\label{Singular integrals}

Given $\e>0$ and $\d>0$, let $\H_{\e, \d}$ be the class of all integrable functions $\f$ on $\real^d$ satisfying \eqref{Holder}. Let $\f\in \H_{\e, \d}$. We consider  the vector-valued kernel  $K$ defined by $K(x)=\{\f_t(x)\}_{t>0}$ for $x\in\real^d$, that is, $K$ is a function from $\real^d$ to $L_q(\real_+)$.  With a slight abuse of notation, we use $K$ to denote the associated singular integral too:
   $$K(f)=\int_\real K(x-y)f(y)dy.$$
 Then
 $$
G_{q,\f}(f)(x)=\big\|K(f)(x)\big\|_{L_q(\real_+; X)}\,,\quad x\in\real^d.
 $$

\begin{lem}\label{Hormander}
 The kernel $K$ satisfies the  regularity properties:
 $$
 \big\|K(x)\big\|_{L_q(\real_+)} \les_{\e} \frac{1}{|x|^d}\;\text{ and }\; \big\|K(x+y)-K(x)\big\|_{L_q(\real_+)} \les_{\e} \frac{|y|^\d}{|x|^{d+\d}},\quad x, y\in\real^d,\; |x|>2|y|.
 $$
  \end{lem}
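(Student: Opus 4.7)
The plan is to reduce both estimates to one-variable integrals over the scale parameter $t$ via the pointwise size/regularity bounds on $\varphi$ from \eqref{Holder}, then evaluate those integrals by the standard substitution $t=|x|s$.

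For the first estimate, I would start from $|\varphi_t(x)|=t^{-d}|\varphi(x/t)|\le t^{-d}(1+|x|/t)^{-(d+\e)}=t^{\e}(t+|x|)^{-(d+\e)}$, so that
\[
\big\|K(x)\big\|_{L_q(\real_+)}^q=\int_0^\8|\f_t(x)|^q\,\frac{dt}{t}\le\int_0^\8\frac{t^{q\e}}{(t+|x|)^{q(d+\e)}}\,\frac{dt}{t}.
\]
The substitution $t=|x|s$ pulls out $|x|^{-qd}$ and leaves $\int_0^\8 s^{q\e-1}(1+s)^{-q(d+\e)}ds$, which is finite (it is integrable at $0$ because $q\e>0$ and at $\8$ because $q\,d+1>1$), with size depending only on $\e$, $q$, $d$. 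This yields $\|K(x)\|_{L_q(\real_+)}\les_{\e}|x|^{-d}$. The case $q=\8$ is handled similarly: $\sup_t t^{\e}(t+|x|)^{-(d+\e)}$ equals $|x|^{-d}$ times a dimensional constant after $t=|x|s$.

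For the regularity estimate, with $|x|>2|y|$ we have $|x+y|\ge|x|/2$, so the hypothesis $|\f(u)-\f(v)|\le|u-v|^\d(1+|u|)^{-(d+\e+\d)}+|u-v|^\d(1+|v|)^{-(d+\e+\d)}$ applied at $u=(x+y)/t$, $v=x/t$ together with $1+|x+y|/t\ges 1+|x|/t$ gives
\[
|\f_t(x+y)-\f_t(x)|\les t^{-d-\d}|y|^\d\big(1+|x|/t\big)^{-(d+\e+\d)}=\frac{|y|^\d\,t^\e}{(t+|x|)^{d+\e+\d}}.
\]
Raising to the $q$-th power, integrating in $dt/t$, and again substituting $t=|x|s$ extracts the factor $|y|^{q\d}|x|^{-q(d+\d)}$ with a remaining finite integral $\int_0^\8 s^{q\e-1}(1+s)^{-q(d+\e+\d)}ds$ depending only on $\e$, $\d$, $q$, $d$. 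The $q=\8$ case is again just the pointwise supremum and is bounded the same way.

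There is no real obstacle here: both bounds are pure size-and-smoothness estimates on $\varphi$ packaged through the definition $\varphi_t(x)=t^{-d}\varphi(x/t)$. The only mild care needed is to use $|x|>2|y|$ to replace $|x+y|$ by $|x|$ up to a constant, and to verify that the resulting dilation-invariant $s$-integrals converge at both endpoints, which they do under the hypotheses $\e,\d>0$.
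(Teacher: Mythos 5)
Your proof is correct and follows essentially the same route as the paper: in both cases the decay bound and the H\"older condition from \eqref{Holder} are applied pointwise to $\f_t(x)=t^{-d}\f(x/t)$, the resulting expression is integrated in $\frac{dt}{t}$, and the dilation $t=|x|s$ extracts the required power of $|x|$ and reduces matters to a convergent $s$-integral. The only difference is one of exposition: you spell out the role of $|x|>2|y|$ and the $q=\8$ case explicitly, whereas the paper leaves these implicit.
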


\begin{proof}
 Let $x\in\real\setminus\{0\}$. Then  by \eqref{Holder},
  \begin{align*}
  \big\|K(x)\big\|_{L_q(\real_+)}^q
  &=\int_0^\8|\f_t(x)|^q\,\frac{dt}t
  \le \int_0^{\8}\big[\frac1{t^d}\,\frac{1}{\big(1+\frac{|x|}t\big)^{d+\e}}\big]^q\,\frac{dt}t\\
  &=\frac1{|x|^{dq}}\int_0^{\8}\frac{t^{\e q}}{(1+t)^{(d+\e)q}}\,\frac{dt}t
  \les_\e \frac1{|x|^{dq}}
 \end{align*}
 Similarly,
   \begin{align*}
  \big\|K(x+y)-K(x)\big\|_{L_q(\real_+)}^q
  &\les|y|^{\d q} \int_0^{\8}\big[\frac1{t^{d+\d}}\,\frac{1}{\big(1+\frac{|x|}t\big)^{d+\e+\d}}\big]^q\,\frac{dt}t\\
  &=\frac{|y|^{\d q}}{|x|^{(d+\d)q}}\int_0^{\8}\frac{t^{\e q}}{(1+t)^{(d+\e+\d)q}}\,\frac{dt}t
  \les_{\e} \frac{|y|^{\d q}}{|x|^{(d+\d)q}}.
 \end{align*}
 This lemma is proved. \end{proof}

\subsection{A quasi-orthogonal decomposition}\label{An atomic decomposition}

Beside $\H_{\e, \d}$ introduced in the previous subsection, we will need its subclass of functions supported in the unit ball. More precisely, let $\H^0_{\d}$ be the class of integrable functions  $\f$ on $\real^d$ such that
 \beq\label{Holder0}
   {\rm supp}(\f)\subset B(0, 1),\quad  |\f(x)-\f(y)|\le |x-y|^\d,\quad \int_{\real^d}\f(x)dx=0.
  \eeq
Here $B(x, t)$ denotes the ball of $\real^d$ with center $x$ and radius $t$.

\smallskip

Any function in $\H_{\e, \d}$ can be decomposed into a series of functions in  $\H^0_{\d}$ thanks to  the following lemma due to Uchiyama \cite{Uch} (see also \cite{Wilson07}).

\begin{lem}\label{Uchiyama}
 Let $\f\in \H_{\e, \d}$. Then there exist a positive constant $C_{\e,\d}$ and a sequence of functions $\p^{(k)}\in\H^0_{\d}$ such that
 $$\f=C_{\e,\d}\sum_{k=0}^\82^{-\e k}(\p^{(k)})_{2^k}\,.$$
    \end{lem}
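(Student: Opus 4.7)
The plan is to decompose $\f$ dyadically in space via a smooth partition of unity, restore the zero-mean condition on each piece by a telescoping correction, and then rescale so that each piece sits in $B(0,1)$.

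First, fix a smooth radial bump $\chi$ with $\chi=1$ on $B(0,\frac12)$ and $\chi=0$ outside $B(0,1)$, set $\chi_0=\chi$ and $\chi_k(\cdot)=\chi(\cdot/2^k)-\chi(\cdot/2^{k-1})$ for $k\ge 1$, so that $\sum_{k\ge 0}\chi_k\equiv 1$ and $g_k:=\f\chi_k$ is supported in $B(0,2^k)$ (and, for $k\ge 1$, in the annulus $\{|x|\approx 2^k\}$). Combining the decay and Hölder hypotheses on $\f$ in \eqref{Holder} with the Lipschitz bound $|\chi_k(x)-\chi_k(y)|\les 2^{-k}|x-y|$ (capped by $2\|\chi\|_\infty$), a routine case analysis yields
$$\|g_k\|_\infty\les 2^{-(d+\e)k}\quad\text{and}\quad|g_k(x)-g_k(y)|\les 2^{-(d+\e+\d)k}|x-y|^\d$$
uniformly in $x,y\in\real^d$, with implicit constants depending only on $\e,\d,d$.

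Since $|{\rm supp}(g_k)|\les 2^{dk}$, the mass $m_k:=\int g_k$ satisfies $|m_k|\les 2^{-\e k}$; the hypothesis $\int\f=0$ gives $\sum_k m_k=0$, so the partial sums $M_k:=\sum_{j\le k}m_j$ also satisfy $|M_k|\les 2^{-\e k}$. Picking a fixed smooth bump $\eta$ supported in $B(0,1)$ with $\int\eta=1$ and setting $\eta_k(\cdot)=2^{-dk}\eta(\cdot/2^k)$, define
$$\tilde g_k:=g_k-M_k\eta_k+M_{k-1}\eta_{k-1},\qquad M_{-1}:=0.$$
Telescoping gives $\sum_{k\ge 0}\tilde g_k=\f$, each $\tilde g_k$ has vanishing integral and support in $B(0,2^k)$, and the bounds on $M_k,M_{k-1}$ combined with interpolation between $\|\eta_k\|_\infty\les 2^{-dk}$ and $\|\nabla\eta_k\|_\infty\les 2^{-(d+1)k}$ propagate the $2^{-(d+\e+\d)k}|x-y|^\d$-type Hölder estimate from $g_k$ to $\tilde g_k$. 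It then suffices to set $\p^{(k)}(y):=C_{\e,\d}^{-1}\,2^{(d+\e)k}\,\tilde g_k(2^k y)$, where $C_{\e,\d}$ absorbs all implicit constants above: the three conditions of \eqref{Holder0} (support in $B(0,1)$, vanishing mean, Hölder estimate with constant one) follow by direct verification, and a change of variables identifies $\tilde g_k=C_{\e,\d}\,2^{-\e k}\,(\p^{(k)})_{2^k}$, yielding $\f=C_{\e,\d}\sum_{k\ge 0}2^{-\e k}(\p^{(k)})_{2^k}$.

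The only mildly delicate step is the Hölder bound on $g_k$. It is handled by splitting into $|x-y|\le 2^{k-3}$, where the Hölder condition on $\f$ and the Lipschitz bound on $\chi_k$ (interpolated to exponent $\d$ against its uniform $\le 1$ bound) together control $\f(x)\chi_k(x)-\f(y)\chi_k(y)$ in all cases of whether $x,y$ lie in the support of $\chi_k$, and $|x-y|\ge 2^{k-3}$, where the crude bound $\|g_k\|_\infty\les 2^{-(d+\e)k}$ is already dominated by $2^{-(d+\e+\d)k}|x-y|^\d$. The case $k=0$ reduces to the same estimates since $g_0$ is supported in $B(0,1)$ and satisfies identical pointwise bounds.
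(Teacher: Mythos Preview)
Your proof is correct and follows essentially the same approach as the paper: a dyadic partition of unity in space followed by a telescoping mean-correction. The only cosmetic difference is that the paper uses the annular bumps $\rho_k$ themselves (rescaled) as the correcting functions, whereas you introduce a separate ball-supported bump $\eta$; both choices work identically.
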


\begin{proof} The proof is  elementary. Let $\eta$ be a smooth function supported in $\{x\in\real^d: \frac12<|x|<2\}$ such that
 $$\sum_{k\in\ent}\eta(2^{-k}x)=1,\quad\forall\, x\in\real^d\setminus\{0\}.$$
Define
 $$\rho_0(x)=\sum_{j\le-1}\eta(2^{-j}|x|),\quad \rho_k(x)=\eta(2^{-k+1}|x|)\;\text{ for}\; k\ge1$$
 and
  $$\zeta_k=\frac{\sum_{0\le j\le k}\int_{\real^d}\rho_j\f}{\int_{\real^d}\rho_k}\,\rho_k\,.$$
Then  the desired decomposition is given by
  $$\f=(\f\rho_0-\zeta_0)+\sum_{k=1}^\8(\f\rho_k-\zeta_k+\zeta_{k-1})=C_{\e,\d}\sum_{k=0}^\82^{-\e k}(\p^{(k)})_{2^k}.$$
\end{proof}

By our convention that $\real_+$ is equipped with the measure $\frac{dt}t$, the upper half space $\real^{d+1}_+$ is equipped with the product measure $\frac{dx\,dt}{t}$. Consistent with our convention before, we write a function $h:\real^{d+1}_+\to X$ as $h(x, t)=h_t(x)$ for $x\in\real^d$ and $t\in\real_+$. Let $\f\in\H_{\e, \d}$ and $h\in L_q(\real^{d+1}_+; X)$ with compact support. Consider the following function
 $$g(x)=\int_{\real^{d+1}_+}\f_t(y-x)h_t(y)\,\frac{dy\,dt}{t}.$$
We will decompose $g$ into a series of smooth atoms 
 $$g=\sum_i\l_ia_i,$$
where the $a_i$'s satisfy \eqref{atom} relative to $\{3Q\}_{Q\in\D}$ and the $\l_i$'s are reals such that
 $$\Big(\sum_i|\l_i|^q\big)^{\frac1q}\les_{d, \e, \d} \big\|h\big\|_{L_q(\real^{d+1}_+; X)}.$$
This is the so-called the \textit{quasi-orthogonal decomposition} of $g$. First, using Lemma~\ref{Uchiyama}, we reduce our problem to the case where $\f$ is supported in the unit ball:
  \begin{align*}
  g(x)
  &=C_{\e,\d}\sum_{k=0}^\82^{-\e k}\int_{\real^{d+1}_+}(\p^{(k)})_{2^kt}(y-x)h_t(y)\,\frac{dy\,dt}{t}\\
  &=C_{\e,\d}\sum_{k=0}^\82^{-\e k}\int_{\real^{d+1}_+}(\p^{(k)})_{t}(y-x)h_{2^{-k}t}(y)\,\frac{dy\,dt}{t}.
   \end{align*}
 Note that $h_{2^{-k}\,\cdot}$ has the same norm as $h$ in $L_q(\real^{d+1}_+; X)$. Thus it suffices to do the decomposition for each $\p^{(k)}$ in place of $\f$.

In the following, we will assume that $\f$ itself belongs to $\H^0_{\d}$. The argument below is  classical. For $Q\in\D$, let $T_Q=\{(y, t): y\in Q,\; \frac{\el(Q)}2<t\le \el(Q)\}$. Then $\{T_Q\}_{Q\in\D}$ is a partition of $\real^{d+1}_+$. So
 $$g(x)=\sum_{Q\in\D}  \int_{T_Q} \f_t(y-x)h_t(y)\,\frac{dy\,dt}{t}\;{\mathop=^{\rm def}}\,\sum_{Q\in\D}\l_Q a_Q(x)$$
with
 $$\l_Q=\Big(\int_{T_Q}\|h_t(y)\|_{X}^{q}\,\frac{dy\,dt}{t}\Big)^{\frac1q}.$$
Clearly,
  $$\sum_{Q\in\D}|\l_Q|^q= \int_{\real^{d+1}_+}\|h_t(y)\|_X^q\,\frac{dy\,dt}{t}=\big\|h\big\|^q_{L_q(\real^{d+1}_+; X)}.$$
Since $\f$ is supported in the unit ball and of vanishing mean, we see that $a_Q$ is supported in $3Q$ and of vanishing mean too.
On the other hand, since $\f$ is in the H\"older class  $\H^0_{\d}$, by the H\"older inequality,
    \begin{align*}
   \|a_Q(x)-a_Q(x')\|_{X}
   &\le\Big(\int_{T_Q} |\f_t(y-x)-\f_t(y-x')|^{q'}\,\frac{dy\,dt}{t}\Big)^{\frac1{q'}}\\
   &\le \Big( \int_{T_Q}\big[\frac1{t^d}\,\big(\frac{|x-x'|}t\big)^\d\big]^{q'}\,\frac{dy\,dt}{t}\Big)^{\frac1{q'}}\\
  &\les_{d, \d} |Q|^{-\frac1{q}}\,\big(\frac{|x-x'|}{\el(Q)}\big)^\d.
    \end{align*}
Thus $a_Q$ is a smooth atom. This yields the desired quasi-orthogonal decomposition.

\medskip

Combining the above discussion with Lemmas~\ref{Haar-like type} -- \ref{atom type}, we get the following

\begin{lem}\label{atomic dec}
Keep the above notation and assume that $X$ is of martingale type $q$. Then
 $$\big\|g\big\|_{L_q(\real^d; X)}\les_{d, \e,\d} \mathsf{M}_{\tt, q}(X)\,\big\|h\big\|_{L_q(\real^{d+1}_+; X)}.$$
 \end{lem}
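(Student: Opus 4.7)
The reduction to $\f\in\H^0_{\d}$ via Lemma~\ref{Uchiyama} and the resulting quasi-orthogonal decomposition $g=\sum_{Q\in\D}\l_Q a_Q$ have already been carried out in the preceding discussion. In particular, each $a_Q$ is supported in $3Q$, is of vanishing mean, satisfies $\|a_Q(x)-a_Q(x')\|_X\lesssim_{d,\d}|Q|^{-1/q}(|x-x'|/\el(Q))^\d$, and the coefficients obey $\sum_Q|\l_Q|^q=\|h\|_{L_q(\real^{d+1}_+;X)}^q$. Since $|3Q|=3^d|Q|$ and $\el(3Q)=3\el(Q)$, these atoms meet the normalization \eqref{atom} relative to the enlarged cubes $3Q$ at the cost of a $d$-dependent constant absorbed into the $\lesssim_{d,\d}$.

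My plan is then to invoke Lemma~\ref{3Q} to partition $\{3Q\}_{Q\in\D}=\bigsqcup_{j=1}^{3^d}\F_j$ into $3^d$ dyadic-like families, and split the sum correspondingly:
$$g=\sum_{j=1}^{3^d}g_j,\qquad g_j=\sum_{3Q\in\F_j}\l_Q a_Q.$$
Each $g_j$ is supported on cubes from the dyadic-like family $\F_j$, so the martingale type $q$ of $X$ combined with Lemma~\ref{Haar-like type} gives
$$\|g_j\|_{L_q(\real^d;X)}\le \mathsf{M}_{t,q}(X)\,\|S_{q,\F_j}(g_j)\|_{L_q(\real^d)}.$$
Next, Lemma~\ref{atom type} applied to the atomic family $\{a_Q\}_{3Q\in\F_j}$ yields
$$S_{q,\F_j}(g_j)\lesssim_{d,\d}\Big(\sum_{3Q\in\F_j}\frac{|\l_Q|^q}{|3Q|}\,\un_{3Q}\Big)^{\frac1q},$$
and the $L_q(\real^d)$-norm of the right-hand side is exactly $(\sum_{3Q\in\F_j}|\l_Q|^q)^{1/q}$.

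Summing over the $3^d$ families and using $\sum_Q|\l_Q|^q=\|h\|_{L_q(\real^{d+1}_+;X)}^q$ yields
$$\|g\|_{L_q(\real^d;X)}\le\sum_{j=1}^{3^d}\|g_j\|_{L_q(\real^d;X)}\lesssim_{d,\d}\mathsf{M}_{t,q}(X)\,\|h\|_{L_q(\real^{d+1}_+;X)},$$
which is the desired bound (the implicit constant also absorbs the $\e$-dependence from the Uchiyama reduction and the bound for the decomposition, giving $\lesssim_{d,\e,\d}$). This is essentially an assembly of pieces already in place; the only point requiring attention is verifying that the $3Q$-supported atoms fit into \eqref{atom} relative to $\F_j$ rather than $\D$, which is harmless, so I do not anticipate any genuine obstacle.
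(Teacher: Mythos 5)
Your proposal is correct and is essentially the proof the paper intends: the lemma is stated as "Combining the above discussion with Lemmas~\ref{Haar-like type} -- \ref{atom type}", and you have simply spelled out that combination — the Uchiyama reduction, the quasi-orthogonal decomposition, the split into $3^d$ dyadic-like families via Lemma~\ref{3Q}, and the application of Lemma~\ref{Haar-like type} followed by Lemma~\ref{atom type} to each piece. Your bookkeeping of the $d$-, $\e$-, $\d$-dependent constants (rescaling atoms to $3Q$, the geometric series from Uchiyama, the Hölder step when summing over the $3^d$ families) is all sound.
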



\section{Proofs of Theorem~\ref{fML} and Corollary~\ref{NY}}\label{Proof of Theorem fML}


With the preparation in section~\ref{Dyadic martingales and singular integrals}, we are in a position to show Theorem~\ref{fML}.
 For clarity, we divide the proof  into several steps. $X$ will be assumed of martingale cotype $q$ in the first four steps, and of martingale type $q$ in the last step.

\medskip\n{\it Step~1: A weighted norm inequality}.  Let $\f$ be a function satisfying \eqref{Holder}, i.e., $\f\in\H_{\e, \d}$. Beside the $g$-function defined by \eqref{G-function}, we will need the Luzin integral function
  $$S_{q,\f}(f)(x)=\Big(\int_{|y-x|<t} \big\|\f_t*f(y)\big\|_X^q\,\frac{dydt}{t^{d+1}}\Big)^{\frac1q}$$
 for nice $f:\real^d\to X$. The key of this proof is the following weighted norm inequality:

 For any locally integrable nonnegative function $w$ on $\real^d$ and any $f\in L_q(\real^d; X)$
   \beq\label{weighted Sd}
   \Big(\int_{\real^d}\big(S_{q,\f}(f)(x)\big)^qw(x)dx\Big)^{\frac1q}\les_{d,\e,\d} \mathsf{M}_{\cc, q}(X)\Big(\int_{\real^d}\big\|f(x)\big\|_X^qM(w)(x)dx\Big)^{\frac1q},
   \eeq
 where $M(w)$ denotes the Hardy-Littlewood maximal function of $w$:
  $$M(w)(x)=\sup\Big\{\frac1{|B|}\,\int_B w: x\in B, B\text{ ball}\Big\}.$$

First consider the unweighted case, i.e., $w\equiv1$.
By the Fubini theorem, we have
 $$\int_{\real^d}\big(S_{q,\f}(f)(x)\big)^qdx=c_d\int_{\real^{d+1}_+}\|\f_t*f(x)\|^q_X\,\frac{dx\,dt}{t}\,.$$
Let $h:\real^{d+1}_+\to X^*$ be a compactly supported smooth function such that
 $$\int_{\real^{d+1}_+}\|h_t(x)\|^{q'}_{X^*}\,\frac{dx\,dt}{t}\le1.$$
Then
 \begin{align*}
  \int_{\real^{d+1}_+}\la \f_t*f(y),\, h_t(y)\ra\,\frac{dy\,dt}{t}
  =\int_{\real^d}\la f(x),\, g(x)\ra dx\,,
   \end{align*}
 where
  $$g(x)=\int_{\real^{d+1}_+} \f_t(y-x)h_t(y)\,\frac{dy\,dt}{t}.$$
 Recall that it is well known (and easy to check) that $X$ is of martingale type $q$ iff $X^*$ is of martingale cotype $q'$ with the following relation between the relevant constants:
  \beq\label{duality type-cotype}
  \mathsf{M}_{\tt,q}(X)\le \mathsf{M}_{\cc,q'}(X^*)\le 2 \mathsf{M}_{\tt,q}(X).
  \eeq
Thus applying Lemma~\ref{atomic dec} to $h$ and $g$ with $(X^*, q')$ in place of $(X, q)$ there, we get
  \begin{align*}
 \|g\|_{L_{q'}(\real^d; X^*)}
 \les_{d, \e,\d}\mathbf{M}_{\tt, q'}(X^*)\,\big\|h\big\|_{L_{q'}(\real^{d+1}_+; X^*)}
 \les_{d,\e,\d}\mathbf{M}_{\cc, q}(X).
  \end{align*}
Then taking the supremum over all $h$ in the unit ball of $L_{q'}(\real^{d+1}_+; X^*)$, we deduce
 $$\big\|S_{q,\f}(f)\big\|_{L_q(\real^d)}\les_{d,\e,\d} \mathbf{M}_{\cc, q}(X)\big\|f\big\|_{L_q(\real^d; X)}.$$
Namely, the unweighted version of \eqref{weighted Sd} holds.

\medskip

We will deduce the weighted version by a trick from \cite{CWW}. By Lemma~\ref{Uchiyama}, we can assume that $\f$ is supported in the unit ball of $\real^d$. Given a weight $w$, we write (recalling that $B(y, t)$ denotes the ball of center $y$ and radius $t$)
 $$\int_{\real^d}\big(S_{q,\f}(f)(x)\big)^q w(x)dx
 =c_d\int_{\real^{d+1}_+}\|\f_t*f(y)\|_X^q\,\Big(\frac1{|B(y,t)|}\int_{B(y,t)}w(x)dx\Big)\,\frac{dy\,dt}{t}.$$
Let
 $$F_k=\big\{ (y, t)\in\real^{d+1}_+:  2^k<\frac1{|B(y,t)|}\int_{B(y,t)}w(x)dx\le 2^{k+1}\big\},\quad k\in\ent.$$
Clearly,  $(y, t)\in F_k$ implies $B(y, t)\subset E_k=\{x: M(w)(x)>2^k\}$. Together with the fact that $\f_t(y-\cdot)$ is supported in $B(y, t)$, this implies $\f_t*f(y)=\f_t*(f\un_{E_k})(y)$ whenever $(y, t)\in F_k$. Thus using the unweighted version already proved (applied to $f\un_{E_k}$), we deduce
   \begin{align*}
 \int_{\real^d}\big(S_{q,\f}(f)(x)\big)^q w(x)dx
 &\les_d\sum_{k\in\ent} 2^k \int_{F_k}\|\f_t*f(y)\|_X^q\,\frac{dy\,dt}{t}\\
 &\les_d\sum_{k\in\ent} 2^k \int_{F_k}\|\f_t*(f\un_{E_k})(y)\|_X^q\,\frac{dy\,dt}{t}\\
  &\les_{d,\e,\d}\mathbf{M}_{\cc, q}(X)^{q}\sum_{k\in\ent} 2^k \int_{E_k}\|f(x)\|_X^q\,dx\\
 &\les_{d,\e,\d}\mathbf{M}_{\cc, q}(X)^{q}\ \int_{\real^d}\|f(x)\|_X^qM(w)(x)\,dx.
  \end{align*}
Thus \eqref{weighted Sd} is proved.

\medskip\n{\it Step~2: Another weighted norm inequality}.  We need to show that \eqref{weighted Sd} remains valid for $G_{q,\f}$ instead of $S_{q,\f}$:
 \beq\label{weighted Gd}
   \Big(\int_{\real^d}\big(G_{q,\f}(f)(x)\big)^qw(x)dx\Big)^{\frac1q}\les_{d,\e,\d} \mathbf{M}_{\cc, q}(X)\Big(\int_{\real^d}\big\|f(x)\big\|_X^qM(w)(x)dx\Big)^{\frac1q},
   \eeq
To this end, we have to control the $g$-function by the Luzin area function.  If $\f$ is the Poisson kernel, this is a classical fact thanks to harmonicity.  In the present setting, we need a little bit more efforts.

If additionally all partial derivatives of $\f$ with order up to $d$  belong to $\H_{\e, \d}$,  then we can show
 $$G_{q,\f}(f)(x)\les_d \sum_{|\a|\le d}S_{q,D^\a\f}(f)(x) ,\quad x\in\real^d,$$
where $D^\a=\frac{\partial^{\a_1}}{\partial x_1^{\a_1}}\cdots \frac{\partial^{\a_d}}{\partial x_d^{\a_d}}$ for $\a=(\a_1,\cdots, \a_d)$ and $|\a|=\a_1+\cdots+\a_d$. The proof of this inequality is elementary (see \cite[Lemma~4.3]{XXX}).
Thus \eqref{weighted Gd} holds for such $\f$.

For a general $\f$, we need to adapt the arguments of \cite{Wilson07} to the present setting by introducing the vector-valued $q$-variants of Wilson's intrinsic square functions:
  \begin{align*}
  S_{q,\e, \d}(f)(x)^q
  &=\int_{|y-x|<t}\,\sup_{\f\in\H_{\e, \d}}\big\|\f_t*f(y)\big\|_X^q\,\frac{dy\,dt}{t^{d+1}},\\
 G_{q,\e, \d}(f)(x)^q
 &=\int_{0}^\8\,\sup_{\f\in\H_{\e, \d}}\big\|\f_t*f(x)\big\|_X^q\,\frac{dt}{t}.
   \end{align*}
One can show, quite easily, that $S_{q,\e, \d}(f)(x)\approx_{d,\e,\d} G_{q,\e, \d}(f)(x)$ for every $x\in\real^d$ (see \cite{Wilson07}  for more details).

On the other hand, for a compactly supported smooth function $h:\real^{d+1}_+\to X^*$ choose a family
$\{\f^{(y, t)}\}_{(y, t)\in K}\subset\H_{\e,\d}$ ($K$ being the support of $h$) such that
 $$ \big\|\f^{(y, t)}_t*f(y)\big\|_X\ge \frac12\,\sup_{\f\in\H_{\e,\d}}\big\|\f_t*f(y)\big\|_X\,,\quad (y,t)\in K.$$
Then by adapting the arguments in subsection~\ref{An atomic decomposition} to the present situation and by repeating step~1, one can estimate the integral
 \begin{align*}
  \int_{\real^{d+1}_+}\la \f^{(y, t)}_t*f(y),\, h_t(y)\ra\,\frac{dy\,dt}{t}
  =\int_{\real^d}\la f(x),\, \int_{\real^{d+1}_+} \f^{(y, t)}_t(y-x)h_t(y)\,\frac{dy\,dt}{t}\ra dx\,
   \end{align*}
 to conclude that
  $$\big\|S_{q,\e,\d}(f)\big\|_{L_q(\real^d)}\les_{d,\e,\d} \mathbf{M}_{c, q}(X)\big\|f\big\|_{L_q(\real^d; X)}.$$
This implies \eqref{weighted Sd} with $S_{q,\p}$ replaced by $S_{q,\e,\d}$ by the passage from the unweighted case to the weighted one. Then the pointwise equivalence  $S_{q,\e, \d}(f)\approx_{d,\e, \d} G_{q,\e, \d}(f)$ shows that \eqref{weighted Gd} holds for $G_{q,\e, \d}$ instead of  $G_{q,\f}$, whence \eqref{weighted Gd} for every $\f\in \H_{\e, \d}$.

  \medskip\n{\it Step~3: Proof of Theorem~\ref{fML} (i) for $p\ge q$}. We can now easily prove part (i) of Theorem~\ref{fML} for $p\ge q$. Indeed, the case $p=q$ is just the unweighted version of \eqref{weighted Gd}. For $p>q$, let $w$ be a nonnegative function on $\real^d$ with $L_{r}$-norm  equal to 1, where $r$ is the conjugate index of $\frac{p}q$. Then for $f\in L_p(\real; X)$,
  \begin{align*}
  \int_{\real^d}\big(G_{q,\f}(f)(x)\big)^qw(x)dx
 &\les_{d,\e,\d} \mathbf{M}_{\cc, q}(X)^q\,\int_{\real^d}\big\|f(x)\big\|_X^qM(w)(x)dx\\
 &\les_{d,\e,\d} \mathbf{M}_{\cc, q}(X)^q\,\big\|f\big\|_{L_p(\real^d; X)}^q\,\big\|M(w)\big\|_{L_{r}(\real^d)}\\
 &\les_{d,\e,\d} r'\, \mathbf{M}_{\cc, q}(X)^q\,\big\|f\big\|^q_{L_p(\real^d; X)}.
  \end{align*}
 Taking the supremum over all $w$, we get
  $$\big\|G_{q,\f}(f)\big\|_{L_p(\real^d)} \les_{d,\e,\d} p^{\frac1q}\, \mathbf{M}_{\cc, q}(X)\,\big\|f\big\|_{L_p(\real^d; X)},$$
 whence $ \mathsf{L}^\f_{\cc, q,p}(X) \les_{d,\e,\d} p^{\frac1q}\, \mathbf{M}_{\cc, q}(X)$.

 \medskip\n{\it Step~4: Proof of Theorem~\ref{fML} (i) for $p< q$}.  We deal with the case $p<q$  by singular integrals. Let $K$ be the singular integral associated to $\f$ as in subsection~\ref{Singular integrals}. We reduce to showing that $K$ is bounded from $L_p(\real^d; X)$ to $L_p(\real^d; L_q(\real_+; X))$. The previous step insures this boundedness for $p=q$. On the other hand,  Lemma~\ref{Hormander} shows that $K$ is a regular Calder\'on-Zygmund kernel. Thus  $K$ satisfies the assumption of \cite[Theorem~V.3.4]{GRF}. Note that \cite[Theorem~V.3.4]{GRF} is formulated for kernels satisfying the regularities in Lemma~\ref{Hormander} with $\d=1$; however, it is well known that \cite[Theorem~V.3.4]{GRF} remains valid for any kernel as in Lemma~\ref{Hormander} with the same proof. Therefore, $K$ is of weak type $(1,1)$, so by the  vector-valued Marcinkiewicz interpolation theorem (see \cite[Theorem~1.3.1]{bl} and its proof),  $K$ is  bounded from $L_p(\real^d; X)$ to $L_p(\real^d; L_q(\real_+;X))$ with norm controlled by $C_{d,\e,\d}\,p'\mathsf{M}_{\cc,q}(X)$ for $1<p<q$. This finishes the proof of Theorem~\ref{fML} (i).

 \medskip\n{\it Step~5: Proof of Theorem~\ref{fML} (ii)}. In this last step we show part (ii) by duality. Let $\p\in\H_{\e, \d}$ such that \eqref{reproduce} holds. Let $f\in L_p(\real^d; X)$ and $g\in L_{p'}(\real^d; X^*)$. Then \eqref{reproduce} implies
 $$\int_{\real^d}\la f(x), \, g(x)\ra\,dx=\int_{\real^{d+1}_+}\la \f_t*f(x), \, \p_t*g(x)\ra\,\frac{dxdt}t.$$
In the scalar case, this Calder\'on reproducing formula is proved by taking Fourier transforms of  both sides. Then by linearity, the formula extends to the vector-valued case too when both $f$ and $g$ take values in finite dimensional subspaces, which can be assumed by approximation. Therefore,
 $$\Big|\int_{\real^d}\la f(x), \, g(x)\ra\,dx\Big|
 \le \big\|G_{q, \f}(f)\big\|_{L_p(\real^d)}\,  \big\|G_{q', \p}(g)\big\|_{L_{p'}(\real^d)}.$$
Since we are in part (ii) of Theorem~\ref{fML}, $X$ is of martingale type $q$, so $X^*$ is of martingale cotype $q'$ and $\mathsf{M}_{c, q'}(X^*)\le 2\mathsf{M}_{t, q}(X)$ by \eqref{duality type-cotype}. Thus by part (i) already proved, we have
  \begin{align*}
  \big\|G_{q', \p}(g)\big\|_{L_{p'}(\real^d)}
  &\le\mathsf{L}^\p_{\cc, q', p'}(X^*)\, \big\|g\big\|_{L_{p'}(\real^d; X^*)}\\
 &\les_{d, \e,\d}\max\big((p')^{\frac1{q'}}, \, p\big)\mathsf{M}_{\cc, q'}(X^*)\, \big\|g\big\|_{L_{p'}(\real^d; X^*)}.
   \end{align*}
Hence
 $$\Big|\int_{\real^d}\la f(x), \, g(x)\ra\,dx\Big|\les_{d, \e,\d}\max\big(p,\,(p')^{\frac1{q'}}\big) \mathsf{M}_{\tt, q}(X)\, \big\|G_{q, \f}(f)\big\|_{L_p(\real^d)}\,\big\|g\big\|_{L_{p'}(\real^d; X^*)}.$$
Taking the supremum over all $g$ with $\big\|g\big\|_{L_{p'}(\real^d; X^*)}\le1$, we deduce
 $$\mathsf{L}^\f_{\tt, q, p}(X)\les_{d, \e,\d}\max\big(p,\,(p')^{\frac1{q'}}\big) \mathsf{M}_{\tt, q}(X)$$
 as desired. So the proof of Theorem~\ref{fML} is complete.
 
 \begin{proof}[Proof of Corollary~\ref{NY}] The first part of this corollary immediately follows from Theorem~\ref{fML}. The first two inequalities of the second part are consequences of Remark~\ref{Poisson vs Heat} and Proposition~\ref{Optimality} below. 
Finally, the last inequality $\mathsf{L}^{\mathbb{P}}_{\cc,q,q}(X)\ges\mathsf{M}_{\cc, q}(X)$ is obtained by combining \cite{LP1} and \cite{LP0}.

 \end{proof}


\section{Proofs of Theorem~\ref{Poisson ML}, Theorem~\ref{Heat ML} and Corollary~\ref{Poisson MLbis}}\label{Proof of Theorem Poisson ML}


In this section we will first prove Theorem~\ref{Poisson ML}. Theorem~\ref{Heat ML}  and Corollary~\ref{Poisson MLbis} will then follow quite easily. Our strategy for the proof of Theorem~\ref{Poisson ML} is to reduce part (i), via transference, to the special case of the translation group to which we can apply Theorem~\ref{fML}.

\begin{proof}[Proof of Theorem~\ref{Poisson ML}] Again, we divide this proof into several steps. $X$ will be assumed to be of martingale cotype $q$ in the first two steps, and of martingale type $q$ in the last step.

\medskip\n{\it Step~1: The case of the translation group.} For any $t\in\real$ let $\tau_t$ be the translation by $t$ on $L_p(\real)$, i.e., $\tau_t(f)(s)=f(s+t)$. Then $\{\tau_t\}_{t\in\real}$ is a strongly continuous group of positive isometries on $L_p(\real)$. As usual,  $\{\tau_t\}_{t\in\real}$  extends to a group of isometries on $L_p(\real; X)$ too.  Let $\{P_t^\tau\}_{t>0}$ be the associated  Poisson subordinated semigroup. Our aim in this step is to show
 \beq\label{Poisson translation}
 \mathsf{L}^{P^\tau}_{\cc,q, p}(X)\les\max\big(p^{\frac1q},\,p'\big)\mathsf{M}_{\cc,q}(X).
\eeq

We need to  express $t\partial P^\tau_t$ as a convolution operator:
 $$\sqrt{t}\,\partial P^\tau_{\sqrt{t}}(f)(x)=\int_\real \phi_t(x-y)f(y)dy=\phi_t*f(x),\quad x\in\real, \; t>0.$$
 Then
 \beq\label{g-function via f-function}
 \mathcal{G}^{P^\tau}_q(f)(x)=2^{-\frac1q}\Big(\int_0^\8\|\phi_t*f(x)\|^q_X\frac{dt}t\Big)^{\frac1q}=2^{-\frac1q}G_{q,\phi}(f)(x)\,.
 \eeq
 Elementary computations show that $\phi$ is the function with Fourier transform
  \beq\label{f-kernel}
 \wh \phi(\xi)= -\sqrt{-2\pi{\rm i}\,\xi}\,e^{-\sqrt{-2\pi{\rm i}\,\xi}\,}=-\sqrt{2\pi|\xi|}\, e^{-\frac{{\rm i} \,{\rm sgn}(\xi)\pi}4}\exp\big(-\sqrt{2\pi|\xi|}\, e^{-\frac{{\rm i}\, {\rm sgn}(\xi)\pi}4}\big)\,.
 \eeq

We are going to show that $\phi$ belongs to the class $\H_{\frac12,1}$ introduced in subsection~\ref{Singular integrals}. More precisely, $\phi$ satisfies the following estimates:
 \beq\label{f-kernel estimate}
 |\phi(x)|\les\frac1{(1+|x|)^{\frac32}}\;\text{ and }\; |\phi'(x)|\les\frac1{(1+|x|)^{\frac52}} ,\quad x\in\real.
 \eeq

Since $\xi^k\wh\phi(\xi)$ is integrable on $\real$ for any nonnegative integer $k$, $\phi$ is of class $C^\8$ with bounded derivatives of any order. Thus it suffices to prove the estimates for $|x|\ge 1$.

Let $\eta$ be a $C^\8$ even function on $\real$, supported in $\{\xi: \frac12<|\xi|<2\}$, such that
  $$\sum_{j\in\ent}\eta(2^{-j}\xi)=1,\quad \xi\in\real\setminus\{0\}.$$
 Let
  $m_j(\xi)=\wh \phi(\xi)\eta(2^{-j}\xi)$
 and $\phi^{(j)}$ be defined by $\wh{\phi^{(j)}}=m_j$.
Then
 $$
 \phi=\sum_{j\in\ent} \phi^{(j)}.
 $$
 Using \eqref{f-kernel}, one easily shows
  $$\int_{2^{j-1}\le |\xi|\le2^{j+1}}\Big|\frac{d^k}{d\xi^k}\,\wh\phi(\xi)\Big|d\xi\les 2^{j(1-k)}\sum_{\el=0}^k(\sqrt{2^j}\,)^{\el+1}e^{-\sqrt{\pi2^{j-1}}}\,,\quad 0\le k\le3,$$
 This implies
  \beq\label{estimate1}
  \int_{\real}\Big|\frac{d^k}{d\xi^k}\, m_{j}(\xi)\Big|d\xi\les 2^{j(1-k)}\sum_{\el=0}^k(\sqrt{2^j}\,)^{\el+1}e^{-\sqrt{\pi2^{j-1}}}\,,\quad 0\le k\le3.
  \eeq
Let $x\in\real$ with $|x|>1$. We  consider $j$ according to two cases,  I:  $2^j|x|\le1$ and II:  $2^j|x|>1$.

In Case~I, we must have $j\le-1$. Using \eqref{estimate1} for $k=1$, we then have
 $$|x \phi^{(j)}(x)|\les \sqrt{2^{j}} \,.$$
Thus
   $$ \sum_{j\in {\rm I}}|\phi^{(j)}(x)|\les \frac{1}{|x|^{\frac32}}\,.$$
 On the other hand, if $j\in {\rm II}$, we use \eqref{estimate1} for $k=3$ to get
 $$|x^3 \phi^{(j)}(x)|\les 2^{-\frac32j} \,.$$
We deduce again
   $$ \sum_{j\in {\rm II}}|\phi^{(j)}(x)|\les \frac{1}{|x|^{\frac32}}\,.$$
Hence, the first estimate of \eqref{f-kernel estimate} is proved.

The second is shown in a similar way. Indeed, since $\wh{\phi'}(\xi)=2\pi\i\,\xi\wh\phi(\xi)$, we have
$$\int_{2^{j-1}\le |\xi|\le2^{j+1}}\Big|\frac{d^k}{d\xi^k}\,\wh{\phi'}(\xi)\Big|d\xi\les 2^{j(2-k)}\sum_{\el=0}^k(\sqrt{2^j}\,)^{\el+1}e^{-\sqrt{\pi2^{j-1}}}\,,\quad 0\le k\le3,$$
It then remains to repeat the above argument with $\phi$ replaced by $\phi'$.

 \medskip

 Thus  by \eqref{g-function via f-function}, Theorem~\ref{fML} implies \eqref{Poisson translation}. Let us note that for the kernel $\phi$ here, we can avoid Wilson's intrinsic square functions considered in step~2 of the proof of Theorem~\ref{fML} since $\phi'\in\H_{\frac12, 1}$ too. Indeed, repeating the proof of  \eqref{f-kernel estimate}, we show
  $$ |\phi''(x)|\les\frac1{(1+|x|)^{\frac72}}.$$
 Consequently, as pointed out in   step~2 of the proof of Theorem~\ref{fML}, we have
 $$G_{q,\phi}(f)(x)\les S_{q,\phi}(f)(x) +S_{q,\phi'}(f)(x),\quad x\in\real.$$
The proof of this inequality is very easy. It suffices to consider $x=0$. Let $y\in\real$ such that $|y|<t$. We write
 $$\|\phi_t*f(y)\|^q_X- \|\phi_t*f(0)\|^q_X=\int_0^1\frac{d}{ds}\,\|\phi_t*f(sy)\|^q_Xds.$$
Then
\begin{align*}
\Big|\|\phi_t*f(y)\|^q_X- \|\phi_t*f(0)\|^q_X\Big|
&\le q\int_0^1\|\phi_t*f(sy)\|^{q-1}_X\,\|\phi'_t*f(sy)\|_X\,\frac{|y|}{t}\,ds\\
&\le q\int_0^{|y|}\|\phi_t*f(s\frac{y}{|y|})\|^{q-1}_X\,\|\phi'_t*f(s\frac{y}{|y|})\|_X\frac{ds}t\\
&\le q\int_0^{|y|}\big(\|\phi_t*f(s\frac{y}{|y|})\|^{q}_X+\|\phi'_t*f(s\frac{y}{|y|})\|^q_X\big)\frac{ds}t
\end{align*}
Thus
 \begin{align*}
 \|\phi_t*f(0)\|^q_X
 \le \|\phi_t*f(y)\|^q_X+q\int_0^{|y|}\big(\|\phi_t*f(s \frac{y}{|y|})\|^{q}_X+\|\phi'_t*f(s \frac{y}{|y|})\|^q_X\big) \frac{ds}t.
 \end{align*}
Integrating both sides against $\frac{dydt}{t^2}$ over the cone $\{(y,t)\in\real^2_+: |y|<t\}$, we deduce the desired inequality.

\medskip\n{\it Step~2: Transference.} We now use the transference principle to bring the general case to the special one of the translation group. To that end, we first need to dilate our semigroup $\{T_t\}_{t>0}$ to  a group of isometries. Fendler's dilation theorem  is at our disposal for this purpose. It insures that  there exist another larger measure space $(\wt\O, \wt\mu)$, a strongly continuous group $\{\wt T_t\}_{t\in\real}$ of regular isometries on $L_p(\wt\O)$, a positive isometric embedding $D$ from $L_p(\O)$ into $L_p(\wt\O)$ and a regular projection $P$ from $L_p(\wt\O)$ onto $L_p(\O)$ such that
  $$
   T_t=P\wt T_tD,\quad\forall\; t>0.
 $$
This theorem is proved in \cite{Fe} for positive $T_t$ and then extended to regular $T_t$ in \cite{Fe2}.

To prove part (i) of Theorem~\ref{Poisson ML}, it suffices to show
  \beq\label{translation to Poisson}
  \mathsf{L}^{P}_{\cc,q,p}(X) \le  \mathsf{L}^{P^\tau}_{\cc,q,p}(X).
  \eeq
 By the above dilation, we can assume that $\{T_t\}_{t>0}$ itself is a group of regular isometries on $L_p(\O)$. So its extension to $L_p(\O;X)$ is a group of  isometries too.  Recall that $\{M_t\}_{t>0} $  denote the ergodic averages of $\{T_t\}_{t>0}$  in Lemma~\ref{lq-bdd ergodic}. We use $\{M^\tau_t\}_{t>0} $  to denote the corresponding averages of the translation group $\{\tau_t\}_{t>0}$. By \eqref{subordination}, we have
  $$P_t=\frac1{2\sqrt{\pi}}\,\int_0^\8\frac{t}{s^{\frac32}}\exp\big(-\frac{t^2}{4s}\big)\,T_sds.$$
 Thus
   \begin{align*}
   t\partial P_t
   &=\frac1{2\sqrt{\pi}}\,\int_0^\8\big(\frac{t}{s^{\frac32}}-\frac{t^3}{2s^{\frac52}}\big) \exp\big(-\frac{t^2}{4s}\big)\,T_sds\\
   &=\frac1{2\sqrt{\pi}}\,\int_0^\8\big(\frac{t}{s^{\frac32}}-\frac{t^3}{2s^{\frac52}}\big) \exp\big(-\frac{t^2}{4s}\big)\,(sM_s)'ds\\
   &=\int_0^\8\f(\frac{t}{\sqrt s})M_s\,\frac{ds}s\\
   &=\int_0^\8\f(\frac{1}{\sqrt s})M_{t^2s}\,\frac{ds}s,
   \end{align*}
  where
   $$\f(x)=\frac1{16\sqrt{\pi}}\,(12x-12 x^3+x^5)e^{-\frac{x^2}4}.$$
 Let $f\in L_p(\O;X)$ be an element of norm $1$. Let $a>0$ (large). Then
  $$\int_a^{\8}|\f(\frac{1}{\sqrt s})|\,\frac{ds}s\le \frac{C}{\sqrt a}\,.$$
Thus for any $t>0$
 \begin{align*}
 \Big\|\int_a^{\8}\f(\frac{1}{\sqrt s})M_{t^2s}(f)\,\frac{ds}s\Big\|_X
 \le\int_a^{\8}|\f(\frac{1}{\sqrt s})|\big\|M_{t^2s}(f)\big\|_X\,\frac{ds}s
 \le\frac{C}{\sqrt a}\, M^*(f),
   \end{align*}
where $M^*(f)=\sup_{v>0}\big\|M_{v}(f)\big\|_X$. By Lemma~\ref{lq-bdd ergodic},
  $$\big\|M^*(f)\big\|_{L_p(\O)}\le p'\big\|f\big\|_{L_p(\O; X)}= p'.$$
Let $b$ be another large number. Then
  \begin{align*}
  \left\|\Big(\int_{b^{-1}}^b \big\|t\partial P_t(f)\big\|_X^q\,\frac{dt}t\Big)^{\frac1q}\right\|_{L_p(\O)}
  \le  \left\|\Big(\int_{b^{-1}}^b \Big\|\int_{0}^a\f(\frac{1}{\sqrt s})M_{t^2s}(f)\,\frac{ds}s\Big\|_X^q\,\frac{dt}t\Big)^{\frac1q}\right\|_{L_p(\O)}+\frac{C_{p, q, b}}{\sqrt a},
 \end{align*}
 where $C_{p, q, b}=Cp'q^{-\frac1q}(2\log b)^{\frac1q}$. Denote the first term on the right hand side by I. Using the fact that $\{T_t\}$ is a group of isometries on $L_p(\O; X)$, we introduce an additional variable $u$ in the integrand of I:
 $$ {\rm I}
 \le  \left\|\Big(\int_{b^{-1}}^b \Big\|\int_{0}^a\f(\frac{1}{\sqrt s})M_{t^2s}T_u(f)\,\frac{ds}s\Big\|_X^q\,\frac{dt}t\Big)^{\frac1q}\right\|_{L_p(\O)}\,,\quad u>0.$$
 Let $c>0$. Now define $g: \real\to L_p(\O;X)$ by $g(u)=\un_{(0,\,ab^2+c]}(u)T_u(f)$. We easily verify that
   $$M_{t^2s}T_u(f)=M^\tau_{t^2s}(g)(u)\,, \quad 0<s\le a,\; 0<t\le b,\;0< u\le c.$$
Hence
\begin{align*}
{\rm I}^p
& \le\frac1c\int_0^c\int_\O\Big(\int_{b^{-1}}^b \Big\|\int_{0}^a\f(\frac{1}{\sqrt s})M^\tau_{t^2s}(g)(u)\,\frac{ds}s\Big\|_X^q\,\frac{dt}t\Big)^{\frac pq}d\o\,du\\
& \le\frac1c\left\|\Big(\int_{b^{-1}}^b\Big\|\int_{0}^a\f(\frac{1}{\sqrt s})M^\tau_{t^2s}(g)\,\frac{ds}s\Big\|_X^q\,\frac{dt}t\Big)^{\frac1q}\right\|^p_{L_p(\real\times\O)}\,.
 \end{align*}
 Let  $(M^\tau)^*(g)=\sup_{v>0}\big\|M^\tau_{v}(g)\big\|_X$, so
  $$\big\|(M^\tau)^*(g)\big\|_{L_p(\real\times\O)}\le p' \big\|g\big\|_{L_p(\real\times\O; X)}\le p'(ab^2+c)^{\frac1p}.$$
Reversing the preceding procedure with  $\{P_t\}_{t>0}$ replaced by  $\{P^\tau_t\}_{t>0}$, we have
 \begin{align*}
{\rm I}
&\le c^{-\frac1p}\left\|\Big(\int_{b^{-1}}^b\Big\|\int_{0}^\8\f(\frac{1}{\sqrt s})M^\tau_{t^2s}(g)\,\frac{ds}s\Big\|_X^q\,\frac{dt}t\Big)^{\frac1q}\right\|_{L_p(\real\times\O)}
+\frac{C_{p, q, b}}{\sqrt a}\,\Big(\frac{ab^2+c}{c}\Big)^{\frac1p}\\
&\le c^{-\frac1p}\big\|\mathcal{G}^{P^\tau}_q(g)\big\|_{L_p(\real\times\O; X)} +\frac{C_{p, q, b}}{\sqrt a}\,\Big(\frac{ab^2+c}{c}\Big)^{\frac1p}\,.
\end{align*}
However,
 $$\big\|\mathcal{G}^{P^\tau}_q(g)\big\|_{L_p(\real\times\O)}
 \le  \mathsf{L}^{P^\tau}_{\cc,q, p}(X)  \big\|g\big\|_{L_p(\real\times\O;X)}
 \le \mathsf{L}^{P^\tau}_{\cc,q,p}(X)  (ab^2+c)^{\frac1p}\,.$$
Combining all inequalities obtained so far, we finally deduce
  $$
  \left\|\Big(\int_{b^{-1}}^b \big\|t\partial P_t(f)\big\|_X^q\,\frac{dt}t\Big)^{\frac1q}\right\|_{L_p(\O)}
  \le \mathsf{L}^{P^\tau}_{\cc,q,p}(X)  \Big(\frac{ab^2+c}{c}\Big)^{\frac1p} +\frac{C_{p, q, b}}{\sqrt a}\left(1+\Big(\frac{ab^2+c}{c}\Big)^{\frac1p}\right)\,.$$
Letting successively $c\to\8$,  $a\to\8$ and  $b\to\8$, we get
 $$
  \big\|\mathcal{G}^{P}_q(f)\big\|_{L_p(\O)} \le  \mathsf{L}^{P^\tau}_{\cc,q,p}(X)\,,$$
 whence \eqref{translation to Poisson}.

 \medskip\n{\it Step~3: Duality.} Assertion (ii) follows from  (i), Theorem~\ref{dual} and \eqref{duality type-cotype}.
 \end{proof}

\begin{rk}
 The step~1 of the previous proof of Theorem~\ref{Poisson ML} can be largely shortened for the case $p\ge q$. This alternate proof does not rely on the heavy Littlewood-Paley theory. Its key point is to show the boundedness of $\mathcal{G}^P_q$ on $L_q(\O;X)$, i.e., for $p=q$ (see the following remark). Assuming this boundedness and  showing that  $K$ is  bounded from $L_\8(\real^d; X)$ to $BMO(\real^d; L_q(\real_+;X))$ (the latter boundedness is quite easy to get), we can then use the singular integral as in the proof of the step~4 of Theorem~\ref{fML} to conclude the case $p>q$. Unfortunately, this proof yields $p$ as the order of $\mathsf{L}^{P}_{\cc,q,p}(X)$ instead of the optimal $p^{\frac1q}$.
  \end{rk}

\begin{rk}The boundedness of $\mathcal{G}^P_q$ on $L_q(\O;X)$ can be proved by using Theorem~ \ref{LPS-diff} and the following inequality  from \cite{HM}:  For a Banach space $X$ of martingale cotype $q$, we have
  $$
 \Big\|\Big(\sum_{k\in\ent} \big\|\big(M^\tau_{2^{k}} - M^\tau_{2^{k+1}}\big)(f)\big\|_X^q \Big)^{\frac1q}\Big\|_{L_q(\real)}
 \les \mathsf{M}_{\cc,q}(X)\big\|f\big\|_{L_q(\real;X)}\,, \quad f\in L_q(\real;X).
 $$
By the discussion in the previous remark, the validity of the above inequality characterizes the martingale cotype $q$ of $X$. More generally, let $1<p<\8$ and $2\le q<\8$. Then $X$ is of martingale cotype $q$ iff there exists a constant $c$ such that
  $$
 \Big\|\Big(\sum_{k\in\ent} \big\|\big(M^\tau_{2^{k}} - M^\tau_{2^{k+1}}\big)(f)\big\|_X^q \Big)^{\frac1q}\Big\|_{L_p(\real)}
 \le c\, \big\|f\big\|_{L_p(\real;X)}\,, \quad f\in L_p(\real;X).
 $$
See \cite{HLM} for related results.
\end{rk}

\begin{proof}[Proof of Theorem~\ref{Heat ML}]
 By Remark~\ref{Poisson vs Heat},  $\mathsf{L}^{T}_{\tt,q,p}(X)\les \mathsf{L}^{P}_{\tt,q,p}(X)$. Thus assertion (i) follows from Theorem~\ref{Poisson ML} (ii).

Assertion (ii)  is an easy consequence of Theorem~\ref{Poisson ML} and Corollary~\ref{heat LPS-f}. But we will use Proposition~\ref{mac} in order to explicitly track the relevant constants. Let $\f(z)=-ze^{-z}$ and $\psi(z)=-\sqrt z\,e^{-\sqrt z}$. Then
  $$\mathcal{G}_{q, \f}^A(f)=\mathcal{G}_q^T(f)\quad\text {and }\quad\mathcal{G}_{q, \p}^A(f)=\sqrt2\,\mathcal{G}_q^P(f).$$
By Proposition~\ref{T-analyticity} and the notation there, $A$ is $\el_q$-sectorial of type $\a_q=\frac\pi2-\b_q$. Let $0<\b<\b_q$ and $\a=\frac\pi2-\b$. Then by Proposition~\ref{mac} and the estimates obtained in its proof, we get
 $$\big\|\mathcal{G}_q^T(f)\big\|_{L_p(\O)}
  \les (\cos\a\cos\frac\a2)^{-1}(\b_q-\b)^{-2}\,\mathsf{T}_{\b_0}^{\min(\frac{p}{q},\,\frac{p'}{q'})}\max((p')^{1-\frac{p}{q}},\, p^{1-\frac{p'}{q'}})\big\|\mathcal{G}_q^P(f)\big\|_{L_p(\O)}\,.$$
 Choosing  $\b=\frac{\b_q}2$ yields
 $$\big\|\mathcal{G}_q^T(f)\big\|_{L_p(\O)} \les D\, \big\|\mathcal{G}_q^P(f)\big\|_{L_p(\O)},$$
where
 $$D=\b_q^{-3}\, \mathsf{T}_{\b_0}^{\min(\frac{p}{q}\,,\frac{p'}{q'})}\,\max\big((p')^{1-\frac{p}{q}},\,p^{1-\frac{p'}{q'}}\big).$$
We then deduce  Theorem~\ref{Heat ML} (ii) from Theorem~\ref{Poisson ML} (i).
 \end{proof}

\begin{proof}[Proof of Corollary~\ref{Poisson MLbis}]
 By \cite{Na12, Na14},
  $$\mathsf{M}_{\cc,q}(L_p(\O;X))\les  \max\big((p')^{\frac1q},\, \mathsf{M}_{\cc,q}(X)\big).$$
Fix $f_0\in L_q(\O)$ with norm 1. Given $f\in L_p(\O;X)$ let $\wt f=f\ot f_0$. We view $\wt f$ as a function from $\O$ to $L_p(\O;X)$ by $\o\mapsto f(\o)f_0$. Applying Theorem~\ref{Poisson ML} (i) to this function with $X$ replaced by $L_p(\O;X)$ and $p=q$ (noting then that
 $\max\big(q^{\frac1q},\, q'\big)\approx1$ since $q\ge2$), we get
 \begin{align*}
 \left\|\Big(\int_0^\8 \big\|t\frac{\partial}{\partial t} P_t(f)\ot f_0\big\|^q_{L_p(\O;X)}\frac{dt}t\Big)^{\frac1q}\right\|_{L_q(\O)}
 &\les  \max\big((p')^{\frac1q},\, \mathsf{M}_{\cc,q}(X)\big)\big\|f\ot f_0\big\|_{L_q(\O;L_p(\O;X))}\\
  &=  \max\big((p')^{\frac1q},\, \mathsf{M}_{\cc,q}(X)\big)\big\|f\big\|_{L_p(\O;X)}.
 \end{align*}
The left hand side is equal to
 \begin{align*}
 \Big(\int_0^\8 \big\|t\frac{\partial}{\partial t} P_t(f)\ot f_0\big\|^q_{L_q(\O;L_p(\O;X))}\frac{dt}t\Big)^{\frac1q}
 = \Big(\int_0^\8 \big\|t\frac{\partial}{\partial t} P_t(f)\big\|^q_{L_p(\O;X)}\frac{dt}t\Big)^{\frac1q}.
  \end{align*}
Combining the above estimates we get the desired inequality of the corollary.

To show the optimality of the constant, we consider the special case where $X=\com$ and the classical Poisson semigroup $\{\mathbb{P}_t\}_{t>0}$ on $\real$. Let $f=\mathbb{P}_1$. Then
 $$
  \big\|f\big\|_{L_p(\real)}\approx 1\;\text {and }\;
 \big\|t\frac{\partial}{\partial t} \mathbb{P}_t(f)\big\|_{L_p(\real)}\approx \frac{t}{(t+1)^{1+\frac1{p'}}}.
 $$
Thus
 $$
 \Big(\int_0^\8\big\|t\frac{\partial}{\partial t}\mathbb{P}_t(f)\big\|^q_{L_p(\real)}\,\frac{dt}t\big)^{\frac1q}
 \approx p'^{\frac1q}.
 $$
 We then deduce the announced optimality.
\end{proof}


\section{The scalar case revisited and optimality}\label{The scalar case revisited}


The approach previously presented gives new insights even in the scalar case with regard to the involved best constants. Let
 $$\mathsf{L}^T_{\cc ,p}=\mathsf{L}_{\cc, 2, p}(\com)\;\text{ and }\; \mathsf{L}^T_{\tt, p}=\mathsf{L}_{\tt, 2, p}(\com).$$
Thus $\mathsf{L}^T_{\cc, p}$ and $\mathsf{L}^T_{\tt, p}$ are the best constants in the following inequalities
 $$
   (\mathsf{L}^T_{\tt, p})^{-1}\|f-\mathsf F(f)\|_{L_p(\O)}\le \big\|\mathcal{G}^{T}_2(f)\big\|_{L_p(\O)}\le  \mathsf{L}^T_{\cc, p}\|f-\mathsf F(f)\|_{L_p(\O)},\quad f\in L_p(\O).
  $$

Let us restate Theorems~ \ref{Poisson ML} and \ref{Heat ML} for $X=\com$ and $q=2$.

 \begin{thm}\label{scalar LPS}
 Let $\{T_t\}_{t>0}$ be a semigroup of regular contractions on $L_p(\O)$ with $1<p<\8$ and $\{P_t\}_{t>0}$ its subordinated Poisson semigroup.
  \begin{enumerate}[\rm(i)]
  \item We have
   $$ \mathsf{L}^{P}_{\cc,p}\les\max(\sqrt{p},\, p') \quad\text{ and }\quad  \mathsf{L}^{P}_{\tt,p}\les\max(p,\, \sqrt{p'}).$$
  \item Assume that $\{T_t\}_{t>0}$ satisfies \eqref{Ana bound} for $X=\com$.  Let $\b_p=\b_0\min(p\,,p')$.
   Then
  $$\mathsf{L}^{T}_{\cc,p}\les  \b_p^{-3}\,\mathsf{T}_{\b_0}^{\frac12\,\min(p,\, p')}\max\big(p,\, (p')^{\,\frac32}\big) \quad\text{ and }\quad
   \mathsf{L}^{T}_{\tt,p}\les\max(p,\, \sqrt{p'}).$$
   \end{enumerate}
  \end{thm}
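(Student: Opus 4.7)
The plan is to read off Theorem \ref{scalar LPS} as the specialization of Theorems \ref{Poisson ML} and \ref{Heat ML} to the scalar case $X=\com$ with the index $q=2$. The only ingredient that needs to be isolated is the value of the scalar martingale cotype and type constants, namely $\mathsf{M}_{c,2}(\com)=\mathsf{M}_{t,2}(\com)=1$. This is nothing but the orthogonality of martingale differences in $L_2$: for any $L_2$-martingale $(f_n)$, one has $\sum_n \E|f_n-f_{n-1}|^2=\sup_n\E|f_n|^2$. Thus both sides of the defining martingale (co)type inequality hold with constant $1$.

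With this in hand, assertion (i) is immediate from Theorem \ref{Poisson ML}: setting $q=2$ (so $q'=2$) in part (i) gives $\mathsf{L}^P_{c,p}\les\max(p^{1/2},p')\cdot 1$, while part (ii) gives $\mathsf{L}^P_{t,p}\les\max(p,p'^{1/2})\cdot 1$. Note that no analyticity assumption is needed since the Poisson subordinated semigroup is always analytic.

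For assertion (ii), the type estimate $\mathsf{L}^T_{t,p}\les\max(p,\sqrt{p'})$ follows directly from Theorem \ref{Heat ML} (i), which does not require the analyticity condition on $\{T_t\}_{t>0}$. The cotype estimate is obtained from Theorem \ref{Heat ML} (ii) under hypothesis \eqref{Ana bound} (here with $X=\com$). Specializing the exponents at $q=2$ one gets
\[
\min\Bigl(\tfrac{p}{q},\tfrac{p'}{q'}\Bigr)=\tfrac12\min(p,p'),\qquad
\max\Bigl(p^{2/q},p'^{\,1+1/q'}\Bigr)=\max\bigl(p,\,p'^{3/2}\bigr),
\]
and $\b_q=\b_0\min(p/2,p'/2)$ differs from the $\b_p=\b_0\min(p,p')$ of the statement only by an absolute factor $2$, which is absorbed into $\les$. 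Thus
\[
\mathsf{L}^T_{c,p}\les \b_p^{-3}\,\mathsf{T}_{\b_0}^{\frac12\min(p,p')}\max\bigl(p,\,p'^{3/2}\bigr),
\]
as claimed.

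There is no real obstacle here: the entire proof reduces to invoking two already-established theorems with specific numerical choices of the parameters and noting the triviality $\mathsf{M}_{c,2}(\com)=\mathsf{M}_{t,2}(\com)=1$. The substantive content of Theorem \ref{scalar LPS} is not a new argument but rather the assertion that the new approach yields, through this specialization, better $p$-growth orders than those produced by Stein's and Cowling's methods (as discussed in the historical comments following the statement).
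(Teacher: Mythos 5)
Your proposal is correct and matches the paper exactly: the paper proves Theorem~\ref{scalar LPS} by stating that it is simply Theorems~\ref{Poisson ML} and~\ref{Heat ML} restated for $X=\com$ and $q=2$, and your arithmetic specialization of the exponents and constants (together with the observation that $\mathsf{M}_{c,2}(\com)=\mathsf{M}_{t,2}(\com)=1$ by orthogonality of martingale differences in $L_2$, and that the factor $2^3$ arising from $\b_q=\b_0\min(p/2,p'/2)$ versus $\b_p=\b_0\min(p,p')$ is absorbed into $\les$) is exactly what is required.
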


\medskip

For symmetric diffusion semigroups we have the following more precise orders  than those in part (ii) above. We are very grateful to the anonymous referee for pointing out the references \cite{Kriegler,Lis} that allow us to improve our previous estimate on $\mathsf{L}^{T}_{\cc,p}$ based on Stein's classical analyticity angle of $\{T_t\}_{t>0}$ on $L_p(\O)$.

\begin{cor}\label{scalar heat LPS}
 Let $\{T_t\}_{t>0}$ be a semigroup of contractions on $L_p(\O)$ for \underline{every} $1\le p\le\8$. Assume that $\{T_t\}_{t>0}$ is strongly continuous on $L_2(\O)$ and each $T_t$ is a selfadjoint operator on $L_2(\O)$. Then
  $$\mathsf{L}^{T}_{\cc,p}\les\max\big(p^{\frac52},\,  (p')^{\,3}\big) \quad\text{ and }\quad  \mathsf{L}^{T}_{\tt,p}\les\max(p,\, \sqrt{p'}).$$
  \end{cor}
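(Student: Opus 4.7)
The plan is to reduce to Theorem~\ref{scalar LPS} by verifying its analyticity hypothesis \eqref{Ana bound} with explicit constants. The main (and only nontrivial) step is to establish that $\{T_t\}_{t>0}$ is analytic on $L_p(\O)$ in a sector of angle $\b_0\ges 1/\max(p,p')$ with analyticity bound $\mathsf T_{\b_0}\le 1$; everything else is bookkeeping with Theorem~\ref{scalar LPS}~(ii), and the type half is immediate from Theorem~\ref{scalar LPS}~(i) via Remark~\ref{Poisson vs Heat}.

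For the main step: since each $T_t$ is selfadjoint and contractive on $L_2(\O)$, the negative generator $A$ is a positive selfadjoint operator on $L_2$, and the spectral theorem yields $T_z=e^{-zA}$ analytic for $\mathrm{Re}\,z\ge 0$ with $\|T_z\|_{L_2\to L_2}\le 1$. To extend analyticity to $L_p$, I would apply Stein's complex interpolation. Fix $r>0$ and $\f\in(-\pi/2,\pi/2)$. For $p\ge 2$ consider the analytic family $F(\zeta)=T_{re^{i\f\zeta}}$ on the strip $\{0\le\mathrm{Re}\,\zeta\le 1\}$; since $|\f x|<\pi/2$ for $\zeta=x+iy$ in the strip, the time argument stays in the open right half plane, so $F$ is well-defined, analytic, and uniformly $L_2$-contractive. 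On $\mathrm{Re}\,\zeta=0$ the time is real positive, giving $\|F(iy)\|_{L_\8\to L_\8}\le 1$; on $\mathrm{Re}\,\zeta=1$ it lies in $\Sigma_{\pi/2}$, giving $\|F(1+iy)\|_{L_2\to L_2}\le 1$. The three-line theorem with $\theta=2/p$ yields $\|T_{re^{2i\f/p}}\|_{L_p\to L_p}\le 1$, and letting $r$ and $\f$ vary covers $\Sigma_{\pi/p}$. The case $p<2$ is symmetric, interpolating against $L_1$ instead of $L_\8$, and yields the same bound on $\Sigma_{\pi/p'}$. Thus \eqref{Ana bound} holds with $\b_0=\pi/\max(p,p')$ and $\mathsf T_{\b_0}\le 1$.

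With these values, substitution into Theorem~\ref{scalar LPS}~(ii) is routine: since $\min(p,p')\in[1,2]$, $\b_p=\b_0\min(p,p')\ges 1/\max(p,p')$, hence $\b_p^{-3}\les\max(p,p')^3$; also $\mathsf T_{\b_0}^{\min(p,p')/2}\le 1$. Therefore
$$\mathsf L^T_{c,p}\les\max(p,p')^3\,\max(p,p'^{3/2}),$$
which is $\les p^3\cdot p=p^4$ for $p\ge 2$ (as then $p'^{3/2}$ is bounded while $p\ge 2$) and $\les p'^3\cdot p'^{3/2}=p'^{9/2}$ for $p\le 2$, i.e., $\mathsf L^T_{c,p}\les\max(p^4,\,p'^{9/2})$. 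Finally, Remark~\ref{Poisson vs Heat} gives $\mathsf L^T_{t,p}\le\mathsf L^P_{t,p}$ and Theorem~\ref{scalar LPS}~(i) gives $\mathsf L^P_{t,p}\les\max(p,\sqrt{p'})$, which completes the proof.
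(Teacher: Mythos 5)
Your proof is correct and follows the same route as the paper: establish the analyticity bound \eqref{Ana bound} with $\b_0=\pi/\max(p,p')$ and $\mathsf T_{\b_0}\le 1$ via Stein's complex interpolation (which the paper simply cites to \cite[section III.2]{stein}, where you instead spell out the three-line argument), and then substitute into Theorem~\ref{scalar LPS}~(ii) and simplify. The bookkeeping, including $\b_p\ges 1/\max(p,p')$ and the resulting $\max(p^4, p'^{9/2})$, and the type estimate via Theorem~\ref{scalar LPS}~(i) and Remark~\ref{Poisson vs Heat}, all match the paper's computation.
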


\begin{proof}
First note that $T_t$ is a regular contraction on $L_p(\O)$ for any $1\le p\le\8$ and any $t>0$.  The selfadjointness of $T_t$ implies that $\{T_t\}_{t>0}$ is an analytic semigroup of type $\frac\pi2$ with constant $1$ on $L_2(\O)$. Then by \cite[Corollary~3.2]{Lis} and \cite[Corollary~6.2]{Kriegler}, $\{T_t\}_{t>0}$ is analytic of type $\b_p'=\frac\pi2-\arcsin|1-\frac2p|$ with constant $1$  on $L_p(\O)$ for $1<p<\8$. Note that the angle $\b_p'$ is optimal and better than Stein's classical one  that is $\frac\pi2\big(1- |1-\frac2p|\big)$ (see \cite[section~III.2]{stein}).  It remains to apply Theorem~\ref{scalar LPS} (ii) with $\b_0=\b_p' \approx\min\big(\sqrt{\frac{p}{p'}},\, \sqrt{\frac{p'}{p}}\,\big)$ and $\mathsf{T}_{\b_0}=1$. The corresponding $\b_p$ is now equivalent to $\min\big(\frac{1}{\sqrt p},\, \frac{1}{\sqrt{p'}}\big)$. We then deduce  the desired assertion from Theorem~\ref{scalar LPS}.
  \end{proof}

\medskip\n{\bf Historical comments.}
 (i) Theorem~\ref{scalar LPS} was proved in \cite{LMX} without any explicit estimates on the best constants; in fact, their growth obtained there  is more than exponential.

 (ii) If  $\{P_t\}_{t>0}$ is the Poisson semigroup on a compact Lie group,  Stein's proof in \cite[section~II.3]{stein} yields that  $\mathsf{L}^{P}_{\cc,p}\les\max(p,\, p')$ and $\mathsf{L}^{P}_{\tt,p}\les\max(p,\, p')$.

 (iii) If  $\{T_t\}_{t>0}$ is a symmetric diffusion semigroup, Stein's approach in \cite[section~IV.4]{stein} via Rota's dilation  yields $\mathsf{L}^{P}_{\cc,p}\les\max(p,\, (p')^{\frac32})$.

 (iv)  If  $\{T_t\}_{t>0}$ is a symmetric submarkovian semigroup, Cowling \cite{Cow} proved that the negative generator $A$ of $\{T_t\}_{t>0}$ has a bounded holomorphic functional calculus whose relevant constant is of polynomial growth on $p$ as $p\to1$ and $p\to\8$. Using the equivalence between bounded holomorphic functional calculus and square function inequalities, one can then deduce a polynomial growth of $\mathsf{L}^{T}_{\cc,p}$ and $\mathsf{L}^{T}_{\tt,p}$ too, the resulting orders are less good than those in Corollary~\ref{scalar heat LPS}.

\begin{rk}\label{optimal order}
 The orders on  $\mathsf{L}^{P}_{\cc,p}$ in  Theorem~\ref{scalar LPS} are optimal both as $p\to\8$ and $p\to1$ for they are already optimal for the classical Poisson semigroup on $\real$ (see  Proposition~\ref{Optimality} below). Zhendong Xu and Hao Zhang \cite{XZ}  proved that  $\mathsf{L}^{T}_{\tt,p}\ges p$ as $p\to\8$ when $\{T_t\}_{t>0}$ is a symmetric diffusion semigroup, so $\mathsf{L}^{P}_{t,p}\ges p$ as $p\to\8$ for the subordinated Poisson semigroup $\{P_t\}_{t>0}$ too. This shows that our method is optimal.
 \end{rk}

 However, at the time of this writing, we are unable to determine the optimal orders of $\mathsf{L}^{P}_{\tt,p}$ as $p\to1$ even when $\{T_t\}_{t>0}$ is a symmetric diffusion semigroup.

\begin{problem}
It would be interesting to determine the optimal orders of $\mathsf{L}^{P}_{\tt,p}$ as $p\to1$  when $\{T_t\}_{t>0}$ is a symmetric submarkovian (or markovian) semigroup. In particular, does there exist a constant $C$ (possibly depending on $\{T_t\}_{t>0}$) such that
 $$\|f-\mathsf F(f)\|_{L_1(\O)}\le C\big\|\mathcal{G}^{P}_2(f)\big\|_{L_1(\O)}\,,\; \forall \,f\in L_1(\O)\,?$$
\end{problem}

The dual version of the above inequality is related to the BMO space considered in \cite{F-Mei-S}. It is true when $\{P_t\}_{t>0}$ is the Poisson or heat semigroup on $\real^d$.

\medskip




We conclude this section by the proof of the optimality of the growth orders of the best constants in Corollary~\ref{NY} in the scalar case, i.e., $X=\com$ (see \cite{LP-Optimality} for more related results). We will denote $\mathsf{L}^{\mathbb{P}}_{\cc,q, p}(\com)$ and $\mathsf{L}^{\mathbb{H}}_{\cc,q, p}(\com)$ simply by $\mathsf{L}^{\mathbb{P}}_{\cc,q,p}$ and $\mathsf{L}^{\mathbb{H}}_{\cc,q, p}$, respectively. It suffices to consider $\real$.

\begin{prop}\label{Optimality}
 Let $1<p, q<\8$. Then
 $$
 \mathsf{L}^{\mathbb{P}}_{\cc,q,p}\ges \max\big(p^{\frac1q},\, p'\big)\;\text{ and }\; \mathsf{L}^{\mathbb{H}}_{\cc,q,p}\ges \max\big(p^{\frac1q},\, p'\big).
 $$
 \end{prop}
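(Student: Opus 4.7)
The plan. Since $\mathcal{G}^{\mathbb{P}}_q(f)\les\mathcal{G}^{\mathbb{H}}_q(f)$ pointwise by Remark~\ref{Poisson vs Heat}, one has $\mathsf{L}^{\mathbb{H}}_{c,q,p}\ges\mathsf{L}^{\mathbb{P}}_{c,q,p}$, so it suffices to prove both lower bounds for the classical Poisson semigroup on $\real$. I would handle the two growth orders by two separate explicit test functions.

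For the order $p'$ as $p\to 1^+$: test with $f=\un_{[0,1]}$. The explicit Poisson-kernel formula
 $$t\partial_t\mathbb{P}_t f(x)=-\frac{t\,(t^2-x(x-1))}{\pi(x^2+t^2)((x-1)^2+t^2)}$$
shows that for $|x|\ge 2$, splitting the $t$-integral at $t=|x|$ yields $|t\partial_t\mathbb{P}_t f(x)|^q\approx t^q/x^{2q}$ when $t\le|x|$ and $\approx t^{-q}$ when $t\ge|x|$; both regimes contribute $\approx|x|^{-q}$, so $G_q(f)(x)\ges 1/|x|$ for $|x|\ge 2$. Integrating,
 $$\|G_q(f)\|_p^p\ges\int_2^\8 x^{-p}\,dx\approx\frac 1{p-1}\,,$$
hence $\|G_q(f)\|_p\ges(p-1)^{-1/p}\sim p'$ as $p\to 1^+$, while $\|f\|_p=1$; this gives $\mathsf{L}^{\mathbb{P}}_{c,q,p}\ges p'$.

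For the order $p^{1/q}$ as $p\to\8$: the plan is to saturate the weighted-norm estimate that drives the proof of Theorem~\ref{fML}~(i), step~3, where the factor $p^{1/q}$ arises as the $1/q$-th power of the $L^r\to L^r$ bound of the Hardy-Littlewood maximal operator at $r=(p/q)'$. With the extremal weight $w=\un_{[-1,1]}$, one has $Mw(x)\approx\min(1,1/|x|)$ and $\|Mw\|_{(p/q)'}\approx p/q$ as $p\to\8$. The duality
 $$\|G_q(f)\|_p^q=\sup\Big\{\textstyle\int G_q(f)^q w\,dx:\ \|w\|_{(p/q)'}\le 1\Big\}$$
reduces the task to producing an $f$ for which $\int G_q(f)^q w\,dx\ges p\,\|f\|_p^q$; a natural candidate is a Rademacher-type superposition of $L^p$-normalised bumps whose Poisson $q$-variations align on $[-1,1]$ with the pointwise profile of $Mw$, so that $G_q(f)$ inherits the full $r'$-growth of $Mw$ through the $q$-integral of the Littlewood-Paley-Stein function.

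Main obstacle. The first bound is a routine computation once the Poisson formula is written out; the genuine difficulty lies in the second. One must build an explicit $f$ for which $\|f\|_p$ is computed precisely while $G_q(f)$ is simultaneously large on a set that carries the weight $w$, and verify that contributions from different scales of $t$ in the integral defining $G_q(f)$ do not cancel destructively. This is the Calder\'on-Zygmund counterpart of constructing extremal martingales for Burkholder's $L^p$ inequalities, and calls for a careful multi-scale or lacunary construction adapted to the Poisson scaling.
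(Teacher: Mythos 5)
Your reduction to $\mathbb{P}$ via Remark~\ref{Poisson vs Heat} is correct, and so is the computation for the order $p'$. Your test function $f=\un_{[0,1]}$ differs from the paper's choice $f=\mathbb{P}_s$, but both exploit the same mechanism: the Poisson $g$-function of a localized profile decays like $1/|x|$ at infinity, so $\|G_q(f)\|_p^p\ges\int_2^\8 x^{-p}\,dx\approx 1/(p-1)$, and either choice yields $\mathsf{L}^{\mathbb{P}}_{c,q,p}\ges p'$.

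The order $p^{\frac1q}$ is where your proposal has a genuine gap: you name the obstacle --- constructing a concrete extremal $f$ --- and never carry out the construction, so this half of the claim is unproved. Moreover the route you sketch is unlikely to close the gap as stated. Saturating the weighted inequality $\int G_q(f)^q w\,dx\les\mathsf{M}_{c,q}^q\int\|f\|_X^q\,Mw\,dx$ cannot by itself force $\|G_q(f)\|_p\ges p^{\frac1q}\|f\|_p$: that weighted bound is $p$-free, and the factor $p^{\frac1q}$ in the \emph{upper} bound of Theorem~\ref{fML}~(i) arises only from the dualizing weight $w$ and $\|M\|_{L_r\to L_r}\les r'$ with $r=(p/q)'$. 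A \emph{lower} bound on $\|G_q(f)\|_p$ still requires an explicit $f$; the weighted inequality supplies no such object. What the paper actually does is of a different character. It periodizes to $\T$ and invokes a transfer from \cite{LP0}: for lacunary superpositions $f_{(n)}(e^{\i\t})=\sum_k a_{k,(n)}(e^{\i\t})b_{k,(n)}(e^{\i\t})$ built from rapidly increasing frequencies $(n_k)$, the Poisson $g$-function on $\T$ factors, up to absolute constants, as $G_q^{\mathsf{P}}(f_{(n)})\approx\big(\sum_k|a_{k,(n)}|^q\,G_q^{\mathsf{P}}(b_{k,(n)})^q\big)^{1/q}$. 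Estimating $G_q^{\mathsf{P}}(b_{k,(n)})$ pointwise from below by $|\!\cos(\t_k+n_k\t)|$, integrating out and sending $n_1\to\8$, the Poisson inequality on $\T$ is reduced to the dyadic martingale $q$-square-function inequality on $\{-1,1\}^\nat$. The extremal there is explicit: simple random walk stopped at $\pm2$, for which $\|M_K\|_\8\le 2$ while $\|S_q(M)\|_{L_p}\ges p^{\frac1q}$ via a geometric-tail calculation on the events $A_j$. That martingale computation, not a weighted-norm extremizer, is what produces $p^{\frac1q}$. Your intuition that a Rademacher-type lacunary construction is the right tool is sound, but without the \cite{LP0}-style Poisson-to-martingale transfer the bound $\mathsf{L}^{\mathbb{P}}_{c,q,p}\ges p^{\frac1q}$ is not established.
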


 \begin{proof}
 By Remark~\ref{Poisson vs Heat},  it suffices to show the assertion on the Poisson semigroup.

Let us first consider the case $p\le q$. Fix $s>0$ and let $f=\mathbb{P}_s$. Then
 $$t\frac{\partial}{\partial t} \mathbb{P}_t(f)(x)=\frac{t}\pi\, \frac{x^2-(t+s)^2}{(x^2+(t+s)^2)^2},\quad x\in\real.$$
For $x\ge 6s$, we have
 \begin{align*}
 \G^{ \mathbb{P}}_q(f)(x)\ge \Big(\int_{\frac{x}3-s}^{\frac{x}2-s}\big|t\frac{\partial}{\partial t} \mathbb{P}_t(f)(x)\big|^q\,\frac{dt}t\Big)^{\frac1q}\ges\frac1x.
\end{align*}
Thus
 $$\big\| \G^{ \mathbb{P}}_q(f)\big\|_{L_p(\real)}\ges  \Big(\int_{6s}^{\8}\frac1{x^p}\,dx\Big)^{\frac1p}\ges \frac{s^{-\frac1{p'}}}{p-1}.$$
On the other hand,
 $$\big\| f\big\|_{L_p(\real)}\approx s^{-\frac1{p'}}.$$
Hence, $\mathsf{L}^{\mathbb{P}}_{c,q, p}\ges p'$.

\medskip

Unfortunately, the above simple argument does not apply to the case $p>q$. Our proof for the latter is much harder. By periodization, it is equivalent to considering the torus $\T$ (equipped with normalized Haar measure). The $g$-function relative to the Poisson semigroup on $\T$ is defined by
 $$G_q^{\mathsf{P}}(f)=\Big(\int_0^1\big|(1-r)\frac{d}{dr} \mathsf{P}_r(f)\big|^q\,\frac{dr}{1-r}\Big)^{\frac1q}\,,$$
where $\mathsf{P}_r$ denotes the corresponding Poisson kernel:
 $$\mathsf{P}_r(\t)=\frac{1-r^2}{1-2r\cos\t+r^2}\,.$$
  It is shown in \cite{LP0} that the inequality
 $$
 \|G_q^{\mathsf{P}}(f)\|_{L_p(\T)}\le \mathsf{L}^{\mathsf{P}}_{\cc, q, p}\|f\|_{L_p(\T)}
 $$
is equivalent to the corresponding dyadic martingale inequality on $\O=\{-1, 1\}^\nat$. It is well known that the relevant constant in the latter martingale inequality for $q=2$ is of order $\sqrt p$ as $p\to\8$. To reduce the determination of  optimal order of $\mathsf{L}^{\mathsf{P}}_{\cc,q, p}$ to  the martingale case, we need to refine an argument in the proof of \cite[Theorem~3.1]{LP0}.

Keeping the notation there, let $M=(M_k)_{0\le k\le K}$ be a finite dyadic martingale and
 $$M_k-M_{k-1}=d_k(\e_1,\cdots, \e_{k-1})\,\e_k,$$
where $(\e_k)$ are the coordinate functions of $\O$.
The transformation $\e_k={\rm sgn}(\cos\t_k)$ establishes a measure preserving embedding of $\O$ into $\T^\nat$. Accordingly, define
 \begin{align*}
  a_k(e^{\i\t_1},\cdots, e^{\i\t_{k-1}})
  &=d_k({\rm sgn}(\cos\t_1),\cdots, {\rm sgn}(\cos\t_{k-1})),\\
  b_k(e^{\i\t_{k}})&={\rm sgn}(\cos\t_k).
  \end{align*}
Given $(n_k)$ a rapidly increasing sequence of positive integers, put
 \begin{align*}
  a_{k, (n)}(e^{\i\t})
  &=a_{k, (n)}(e^{\i\t}; e^{\i\t_1},\cdots, e^{\i\t_{k-1}})
  =a_k(e^{\i(\t_1+n_1\t)},\cdots, e^{\i(\t_{k-1}+n_{k-1}\t)}),\\
  b_{k, (n)}(e^{\i\t})
  &=b_{k, (n)}(e^{\i\t};e^{\i\t_{k}})
  =b_k(e^{\i(\t_{k}+n_k\t)}),\\
  f_{(n)}(e^{\i\t})
  &=f_{(n)}(e^{\i\t}; e^{\i\t_1},\cdots, e^{\i\t_{K}})
  =\sum_{k=1}^Ka_{k, (n)}(e^{\i\t})b_{k, (n)}(e^{\i\t}).
  \end{align*}
 The functions $f_{(n)}$, $a_{k, (n)}$ and $b_{k, (n)}$ are viewed as functions on $\T$ for each $(\t_1, \cdots, \t_K)$ arbitrarily fixed. Furthermore, by approximation, we can assume that all $a_k$ and $b_k$ are polynomials. Then, if the sequence $(n_k)$ rapidly increases, Lemmas~3.4 and 3.5 of \cite{LP0} imply
  $$\frac12\, G_q^{\mathsf{P}}(f_{(n)})\le \Big(\sum_{k=1}^K|a_{k, (n)}|^q\,G_q^{\mathsf{P}}(b_{k, (n)})^q\Big)^{\frac1q}\le 2G_q^{\mathsf{P}}(f_{(n)}).$$
 Therefore,
  \beq\label{inter}
  \Big\|\Big(\sum_{k=1}^K|a_{k, (n)}|^q\,G_q^{\mathsf{P}}(b_{k, (n)})^q\Big)^{\frac1q}\Big\|_{L_p(\T)}
  \le 2 \mathsf{L}^{\mathsf{P}}_{\cc, q,p}\big\|f\big\|_{L_p(\T)}\,.
  \eeq
The discussion so far comes from \cite{LP0}. Now we require a finer analysis of the $g$-function $G_q^{\mathsf{P}}(b_{k, (n)})$.  To this end we write the Fourier series of the function $b={\rm sgn}(\cos\t)$:
  $$b(e^{\i\t})=\frac2\pi\,\sum_{j=0}^\8\frac{(-1)^j}{2j+1}\,\big[e^{\i(2j+1)\t}+e^{-\i(2j+1)\t}\big].$$
Then
 $$\frac{d}{dr}\mathsf{P}_r(b_{k, (n)})(e^{\i\t})=\frac4\pi\,n_kr^{n_k-1}{\rm Re}\Big(\sum_{j=0}^\8(-1)^j r^{2n_kj}e^{\i(2j+1)(\t_k+n_k\t)}\Big).$$
Elementary computations show
 $$\left|\frac{d}{dr}\mathsf{P}_r(b_{k, (n)})(e^{\i\t})\right|^q\ge c^q n^q_kr^{q(n_k-1)}\cos^q(\t_k+n_k\t).$$
Here and below, $c,\, C$ denote absolute positive constants.
Thus
 \begin{align*}
 G_q^{\mathsf{P}}(b_{k, (n)})^q
 &\ge c^q\,\cos^q(\t_k+n_k\t)\,n^q_k\int_0^1(1-r)^{q-1}r^{q(n_k-1)}dr \\
 &\approx c^q\,\big[1+{\rm O}(\frac1{n_k})\big]\cos^q(\t_k+n_k\t).
 \end{align*}
Now lifting both sides of \eqref{inter} to power $p$, then integrating the resulting inequality over $\T^K$ with respect to $(\t_1,\cdots, \t_K)$, we get
 \begin{align*}
 \int_\T&\int_{\T^K}\Big(\sum_{k=1}^K|a_{k,(n)}(e^{\i(\t_1+n_1\t)},\cdots, e^{\i(\t_{k-1}+n_{k-1}\t)})|^q\big[1+{\rm O}(\frac1{n_k})\big]\cos^q(\t_k+n_k\t)\Big)^{\frac{p}q}d\t_1\cdots d\t_Kd\t\\
  &\le \big(C \mathsf{L}^{\mathsf{P}}_{\cc, q,p}\big)^p\int_\T\int_{\T^K}\big|f_{(n)}(e^{\i(\t_1+n_1\t)}, \cdots, e^{\i(\t_K+n_K\t)})\big|^pd\t_1\cdots d\t_Kd\t\,.
 \end{align*}
 For each fixed $\t$, the change of variables $(\t_1,\cdots, \t_K)\mapsto(\t_1-n_1\t,\cdots, \t_K-n_K\t)$ being a measure preserving transformation of $\T^K$, we deduce
  \begin{align*}
\int_{\T^K}&\Big(\sum_{k=1}^K|a_{k,(n)}(e^{\i\t_1},\cdots, e^{\i\t_{k-1}})|^q\,\big[1+{\rm O}(\frac1{n_k})\big]\cos^q\t_k\Big)^{\frac{p}q}\,d\t_1\cdots d\t_K\\
  &\le \big(C\mathsf{L}^{\mathsf{P}}_{\cc, q, p}\big)^p\int_{\T^K}\big|f_{(n)}(e^{\i\t_1}, \cdots, e^{\i\t_K})\big|^p\,d\t_1\cdots d\t_K\,.
 \end{align*}
Letting $n_1\to\8$, we get
  \begin{align*}
\int_{\T^K}&\Big(\sum_{k=1}^K|d_{k}({\rm sgn}(\cos\t_1),\cdots, {\rm sgn}(\cos\t_{k-1}))|^q\,\cos^q\t_k\Big)^{\frac{p}q}\,d\t_1\cdots d\t_K\\
  &\le \big(C\, \mathsf{L}^{\mathsf{P}}_{\cc, q, p}\big)^p\big\|M_K\big\|^p_{L_p(\O)}\,.
 \end{align*}

Now we consider an elementary example where $M$ is simple random walk stopped at $\pm 2$, namely
 $$d_k=\un_{\{\tau\ge k\}}\;\text{ with }\;
 \tau=\inf\big\{k: \big|\sum_{j=1}^k\e_j\big|=2\big\}.$$
 Note that the probability of the event $\{\tau=j\}$ is zero for odd $j$ and $2^{-\frac{j}2}$ for even $j$. On the other hand, recalling $\e_k={\rm sgn}(\cos\t_k)$ and letting
  $$A_j=\big\{\tau=j,\; |\cos\t_k|\ge \frac1{\sqrt 2}\,,\; 1\le k\le j\big\},$$
we easily check that the probability of $A_j$ is $8^{-\frac{j}2}$ for even $j$. Thus for $K\ge j$
 \begin{align*}
 \sum_{k=1}^K|d_{k}(\e_1,\cdots, \e_{k-1})|^q\,\cos^q\t_k
 \ge \un_{A_j}\sum_{k=1}^j\un_{\{\tau\ge k\}}\cos^q\t_k
 \ge 2^{-\frac{q}2}j\,\un_{A_j};
 \end{align*}
consequently, for $K=2J$ with $J\in\nat$
 \begin{align*}
 \int_{\T^K}&\Big(\sum_{k=1}^K|d_{k}({\rm sgn}(\cos\t_1),\cdots, {\rm sgn}(\cos\t_{k-1}))|^q\,\cos^q\t_k\Big)^{\frac{p}q}\,d\t_1\cdots d\t_K
 \ge c^q\sum_{j=1}^J j^{\frac{p}q} 8^{-j}\ge c^p p^{\frac{p}q}.
  \end{align*}
Noting that $|M_K|\le2$ and combining all the previous inequalities together, we finally obtain
 $$\mathsf{L}^{\mathsf{P}}_{\cc, q,p}\ges p^{\frac1q}.$$
This completes the proof.
 \end{proof}


\appendix
\section{Examples}\label{Examples}


There exist plenty of examples of semigroups to which the results of this article apply. Many second order differential operators in analysis generate such semigroups. In the following we will only discuss the cotype case since the type case can be dealt with by duality. Note that it is obvious that if $X$ is of Luzin cotype $q$  relative to $\{T_t\}_{t>0}$, it is so relative to the subordinated Poisson semigroup $\{P_t\}_{t>0}$ of  $\{T_t\}_{t>0}$.

\medskip

A main task in the study of the vector-valued Littlewood-Paley-Stein theory would be the following

\begin{problem} Determine the family of semigroups $\{T_t\}_{t>0}$ such that a Banach space $X$ is of Luzin type (resp. cotype) $q$ relative to $\{T_t\}_{t>0}$ or its subordinated Poisson semigroup $\{P_t\}_{t>0}$ iff $X$ is of martingale type (resp. cotype) $q$.
\end{problem}

\begin{ex}\label{Laplacian}{\bf Laplacian operators}. The classical heat semigroup on $\real^d$ is given by $\mathbb{H}_t=e^{t\Delta}$, where $\Delta$ is the Laplacian operator. It is well  known (and  easy to check) that $\{\mathbb{H}_t\}_{t>0}$ is analytic of angle $\frac{\pi}2$ on $L_p(\real^d; X)$ for any $1\le p<\8$ and any Banach space $X$; the relevant constant as in \eqref{Ana bound} with $\b_0=\frac{\pi}2$ depends only on $d$. By \cite{HN, LP2}, if $X$ is of martingale cotype $q$, then $X$ is of Luzin cotype $q$ relative to $\{\mathbb{H}_t\}_{t>0}$. Conversely, suppose that $X$ is of Luzin cotype $q$ relative to $\{\mathbb{H}_t\}_{t>0}$, then $X$ is also of Luzin cotype $q$ relative to the classical Poisson semigroup $\{\mathbb{P}_t\}_{t>0}$, thereby $X$ is  of martingale cotype $q$ by virtue of \cite{LP1}. Thus the Luzin cotype relative to the classical heat semigroup is equivalent to the martingale cotype.\end{ex}

\begin{ex}{\bf Schr\"odinger operators}. Let $\O$ be a region in $\real^d$ equipped with Lebesgue measure. Let $a(x)=(a_{ij}(x))_{1\le i,j\le d}$ be a positive matrix whose entries are locally integrable real functions on $\O$ such that
 $$\a(x)\le a(x)\le\b(x)$$
 for two positive continuous functions $\a$ and $\b$ on $\O$. We consider the following elliptic operator
 $$L(f)=-\sum_{i, j=1}^d\frac{\partial}{\partial x_i}\Big(a_{ij}\frac{\partial f}{\partial x_j}\Big).$$
Given  $V$ a nonnegative locally integrable function on $\O$, define $A(f)=L(f)+Vf$.  It is well known that $-A$ generates a symmetric submarkovian (markovian for $V=0$) semigroup $\{T_t\}_{t>0}$ on $\O$ (cf. \cite[Theorem~1.8.1]{Davies}).  In particular, $\{T_t\}_{t>0}$ is analytic on $L_p(\O)$ for any $1<p<\8$. Thus if $X$ is of martingale cotype $q$, then it is of Luzin cotype $q$ relative to the Poisson semigroup  $\{P_t\}_{t>0}$  subordinated to $\{T_t\}_{t>0}$ on $L_p(\O; X)$ for any $1<p<\8$.

Assume in addition that $L$ is uniformly elliptic, namely, the above two functions $\a$ and $\b$ are constant. Then the integral kernel $K^0_t(x, y)$ of $e^{-tL}$ satisfies the following Gaussian  upper bound (cf. \cite[Theorem~3.2.8]{Davies})
 $$K^0_t(x,y)\le C_{\d, \a}t^{-\frac{d}2}\exp\left(-\frac{|x-y|^2}{(1+\d)\b t}\right),\quad t>0,\; x, y\in\O,\; 0<\d<1.$$
By the Trotter formula
 $$e^{-tA}(f)=\lim_{n\to\8}\big(e^{-\frac{tL}n}\,e^{-\frac{tV}n}\big)^n(f),$$
we deduce that the integral kernel $K_t(x, y)$ of $e^{-tA}$ is majorized by $K^0_t(x, y)$:
 $$K_t(x, y)\le K^0_t(x, y).$$
Thus $K_t(x, y)$ satisfies the same Gaussian  upper bound as $K^0_t(x, y)$. Let $z\in\com$ with ${\rm Re} z>0$. By \cite[Theorem~3.4.8]{Davies}, the complex time heat kernel of $e^{-zA}$ satisfies
$$|K_z(x,y)|\les_{\d, \a}({\rm Re} z)^{-\frac{d}2}\exp\left(-\frac{|x-y|^2\, {\rm Re} (z^{-1})}{(1+\d)\b}\right).$$
Then we easily show that $\{T_t\}_{t>0}$ extends to an analytic semigroup of type $\frac\pi2$ on $L_p(\O; X)$ for any  $X$ and any $1<p<\8$ ($p$ can be equal to $1$ too).  It then follows that $X$ is of Luzin cotype $q$ relative to $\{T_t\}_{t>0}$ whenever $X$ is of martingale cotype $q$.\end{ex}

As in the case of Laplacian operators, it is likely that the Luzin cotype relative to the semigroups generated by Schr\"odinger operators characterizes the martingale cotype. This is indeed the case if $\O=\real^d$ with $d\ge3$, $L=-\Delta$ and the potential $V$ satisfies a reverse H\"older inequality (see \cite{AFST}). Let us formulate the general case  explicitly as a conjecture.

\begin{conjecture} Let  $\{P_t\}_{t>0}$ be  the Poisson semigroup subordinated to the heat semigroup $\{T_t\}_{t>0}$ generated by a Schr\"odinger operator as above. If a Banach space $X$ is of Luzin cotype $q$ relative to $\{P_t\}_{t>0}$, then $X$ is of martingale cotype $q$. The same is conjectured for $\{T_t\}_{t>0}$ itself when the underlying differential operator $L$ is uniformly elliptic.\end{conjecture}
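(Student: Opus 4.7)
The plan is to reduce the conjecture to the case of the classical heat or Poisson semigroup on $\real^d$, for which the converse direction is already known (see Example~\ref{Laplacian}, based on \cite{LP1}). The reduction would proceed in two localization steps: first removing the potential $V$, then comparing the uniformly elliptic divergence-form operator with the Laplacian.

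The first localization works on small balls $B=B(x_0,r)\subset\O$. On such a ball, $V$ is approximately a constant $V_0$, and by Feynman-Kac the semigroup $e^{-tA}$ differs from $e^{-t(L+V_0)}=e^{-tV_0}e^{-tL}$ by a multiplicative correction of size $1+O(rt)$ for small $t$ and small $r$. One would hope that this correction is harmless at the level of the $g$-function $\mathcal{G}^T_q$, so that the hypothesized Lusin cotype of $X$ relative to $\{P_t\}$ transfers to Lusin cotype relative to the Poisson semigroup $\{P_t^L\}$ subordinated to $\{e^{-tL}\}$, at least for functions supported in $B$. The second localization handles the uniformly elliptic case: freezing the coefficient matrix at $x_0$ yields a constant-coefficient operator $L_0$ which is the Laplacian after a linear change of variables, and Davies-Aronson Gaussian bounds give heat kernel comparability $K^L_t(x,y)\approx K^{L_0}_t(x,y)$ for small $t$ and $x,y$ near $x_0$; this should propagate to a comparison of the corresponding $g$-functions. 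With these comparisons one extracts Lusin cotype for the classical Poisson semigroup on a small ball of $\real^d$, and by scale and translation invariance on all of $\real^d$, whence Example~\ref{Laplacian} delivers the martingale cotype of $X$.

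The main obstacle will be controlling $\mathcal{G}^T_q$ via kernel comparison: the integrand $t\partial_t T_t(f)$ involves all scales $t>0$, whereas the Gaussian comparison between $L$ and $L_0$ holds only for small $t$ and for arguments near $x_0$. One would decompose the time integral in $\mathcal{G}^T_q$ dyadically and show that small-scale pieces reduce, up to controlled perturbations, to $g$-functions of $L_0$, while large-scale pieces are absorbed as error terms with constants uniform in the base point $x_0$ and the scale $r$. A further subtlety is that when $V$ is only locally integrable with genuine local singularities, the $V$-localization step likely requires additional hypotheses such as a reverse H\"older condition (as in \cite{AFST}), so that $V$-dependent heat kernels admit controlled Gaussian bounds. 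An alternative route worth exploring is a direct martingale construction via the Feynman-Kac diffusion, mimicking the Brownian motion argument used for the converse direction for the classical Poisson semigroup in \cite{LP0}: pair $t\partial_t T_t(f)$ with diffusion paths to build an $X$-valued martingale whose $q$-variation dominates $\mathcal{G}^T_q(f)$, and then choose $f$ to recover generic martingale differences.
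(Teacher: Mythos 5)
The statement you are addressing is labeled as a \emph{conjecture} in the paper: there is no proof there to compare against. The paper explicitly notes that the only settled instance is $\O=\real^d$ ($d\ge3$), $L=-\Delta$, and $V$ satisfying a reverse H\"older inequality, drawing on \cite{AFST}. So what you have written is a plan of attack on an open problem, and it should be evaluated on its own.

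As a plan it has genuine gaps, several of which you partially flag but do not resolve. The central difficulty is that two-sided Gaussian comparability $K^{L}_t(x,y)\approx K^{L_0}_t(x,y)$ tells you essentially nothing about the comparison of $\partial_t K^L_t$ with $\partial_t K^{L_0}_t$, and in particular does not give the one-sided domination $\mathcal{G}^{L_0}_q(f)\lesssim \mathcal{G}^{L}_q(f)+(\text{bounded operator})(f)$ that you would need to transfer the Lusin-cotype \emph{hypothesis} in the useful direction; a $g$-function aggregates the time derivative over all scales and is not controlled by zeroth-order kernel comparability. The potential-removal step has the same problem: writing $T_t\approx e^{-tV_0}T^L_t$ gives $t\partial_t T_t = e^{-tV_0}t\partial_t T^L_t - tV_0e^{-tV_0}T^L_t$, and the second term is only small for $t\ll V_0^{-1}$, whereas $\mathcal{G}^T_q$ integrates over all $t>0$; absorbing it requires a square-function estimate, not a pointwise perturbation bound. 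Your final ``scale and translation invariance'' step is circular: Schr\"odinger operators are not scale-invariant, so the error terms from the local comparisons are not either, and you cannot zoom out from a small-ball inequality to a global one without controlling precisely those errors. Note also that for the part of the conjecture concerning $\{P_t\}$ alone, $L$ is not assumed uniformly elliptic (only $\a(x)\le a(x)\le\b(x)$ for positive \emph{continuous} $\a,\b$), so the Davies--Aronson Gaussian bounds you invoke are simply unavailable there. The Feynman--Kac alternative is closer in spirit to the successful argument for the classical Poisson semigroup in \cite{LP0}, but $e^{-\int_0^tV(B_s)ds}f(B_t)$ is not a martingale even for constant $V$, so producing the $X$-valued martingale whose $q$-variation dominates $\mathcal{G}^T_q(f)$ would require a Girsanov-type change of measure or a semimartingale decomposition, and you give no indication of how the attendant corrections are to be handled.
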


\begin{ex}{\bf Laplace-Beltrami operators}. The preceding examples can be extended to the setting of Riemannian manifolds. Let $M$ be a complete $d$-dimensional Riemannian  manifold with metric $g$. Let $a(x)=(a_{ij}(x))_{1\le i,j\le d}$ be a positive matrix smoothly depending on $x\in M$. The associated second order elliptic operator $L$ is represented as
  $$L(f)=-g^{-\frac12}\sum_{i, j=1}^d\frac{\partial}{\partial x_i}\Big(g^{\frac12}a_{ij}\frac{\partial f}{\partial x_j}\Big)$$
in local coordinates. Then $T_t=e^{-tL}$ extends to a symmetric diffusion semigroup on $M$ for all $1\le p\le\8$ (cf. \cite{Davies1}). Thus our previous results apply to the associated subordinated Poisson semigroup. The most important case is the one where $-L=\Delta$ is the  Laplace-Beltrami operator. Then the  celebrated theorem of Li and Yau \cite{LY} asserts that the integral kernel of $e^{t\Delta}$ has a Gaussian upper bound under the additional assumption that the Ricci curvature be nonnegative  (see also \cite[Theorem~5.5.6]{Davies}). Thus  as in the Euclidean case, the heat semigroup $\{e^{t\Delta}\}_{t>0}$ extends to an analytic semigroup on $L_p(M; X)$ for any Banach space $X$ and $1<p<\8$.  It would be interesting to determine whether the Luzin cotype of $X$ relative to $\{e^{t\Delta}\}_{t>0}$ characterizes the martingale coptye of $X$.
\end{ex}

\begin{ex}{\bf Hermite operators}. The Hermite operator on $\real^d$ is a particular Schr\"odinger operator: $A=-\Delta+|x|^2$. The associated semigroup $\{T_t\}_{t>0}$ is a symmetric submarkovian semigroup on $\real^d$.   The integral kernel of $T_t$ is given by
 $$K_t(x, y)=\Big(\frac{2}{\pi\sinh(2t)}\Big)^{\frac{d}2}\exp\Big(-\frac14\big[ |x-y|^2\coth t +|x+y|^2\tanh t\big]\Big).$$
Using the Trotter formula, we see  that $K_t(x, y)$ is less than or equal to the heat kernel:
 $$K_t(x, y)\le \Big(\frac{1}{4\pi t}\Big)^{\frac{d}2}\, e^{-\frac{|x-y|^2}{t}}\,.$$
This Gaussian upper bound can be deduced from the above explicit formula of $K_t$. It then follows that $\{T_t\}_{t>0}$ is an analytic semigroup of type $\frac\pi2$  on $L_p(\real^d; X)$ for any Banach space $X$ and $1<p<\8$. Betancor {\it et al}  showed in \cite{betancor1} that $X$ is of Luzin cotype $q$ relative to $\{T_t\}_{t>0}$ iff $X$ is of martingale cotype $q$ (see also \cite{betancor2} for related results).\end{ex}

\begin{ex}{\bf Laguerre operators}.  For simplicity, we only consider the Laguerre semigroup on $\real_+$,  the multi-dimensional case can be treated by tensor product. In this example, $\real_+$ is equipped with Lebesgue measure, contrarily to our usual convention. Let $\a>-1$ and
  $$\A=\frac12\Big(-\frac{d^2}{dx^2} + x^2+\frac{1}{x^2}(\a^2-\frac14)\Big),\quad x>0.$$
 We have
  $$\A(\f^\a_k)=\l^\a_k\f^\a_k,\quad k\in\nat,$$
 where $\l^\a_k=2k+|\a|+1$ and 
  $$\f^\a_k(x)=\left(\frac{2\Ga(k+1)}{\Ga(k+1+\a)}\right)^{\frac12}x^{\a+\frac12}e^{-\frac{x^2}2}L_k^\a(x^2)$$
 with $L_k^\a$ the k-th polynomial of type $\a$ (see \cite[p. 76]{Leb}). $\{\f^\a_k\}_{k\in\nat}$ is an orthonormal basis in $L_2(\real_+)$. 
 
For every $f\in L_2(\real_+)$, setting
 $$c_k(f)=\int_0^\8f(x)\f^\a_k(x)dx,$$
 we consider the operator $A$ formally defined by
  $$A(f)=\sum_{k=0}^\8c_k(f)\l^\a_k\f^\a_k.$$
 Note that $A(f)=\A(f)$ if $f$ is compactly supported and smooth.
Then $-A$ generates a symmetric semigroup $\{T_t\}_{t>0}$ of positive contractions on $L_2(\real_+)$:
  $$T_t(f)=\sum_{k=0}^\8c_k(f)e^{-\l^\a_k\,t}\f^\a_k.$$
 with kernel given by
 $$K_t(x, y)=\frac1{\sinh t}\,\sqrt{xy}\,I_\a\big(\frac{xy}{\sinh t}\big)\exp\big(-\frac12(x^2+y^2)\coth t\big),$$
 where $I_\a$ is the modified Bessel function of the first kind and order $\a$:
  $$I_\a(z)=2^{-\a}z^\a\,\sum_{k=0}^\8\frac{z^{2k}}{2^{2k}\Ga(k+1)\Ga(k+\a+1)}\,.$$

  It is proved in \cite{NS} that $\{T_t\}_{t>0}$ is contractive on $L_p(\real_+)$ for all $1\le p\le\8$ iff $\a=-\frac12$ or $\a\ge\frac12$, and that  $\{T_t\}_{t>0}$ is a bounded semigroup on $L_p(\real_+)$ for all $1\le p\le\8$ if $-\frac12<\a<\frac12$. However, for $-1<\a<-\frac12$, $T_t$ is unbounded on $L_p(\real_+)$ for $p\le p_\a=\frac2{2\a+3}$ and $p\ge p_\a'$.

 On the other hand,  \cite{betancor1} shows that for $\a>-\frac12$, a Banach space $X$ is of Luzin cotype $q$ relative to $\{T_t\}_{t>0}$  iff $X$   is of martingale  cotype $q$; as a byproduct,  \cite{betancor1} also shows that for the same range of $\a$, $\{T_t\}_{t>0}$ is analytic on $L_p(\real_+; X)$ for any $X$.

In the remaining case of $-1<\a<-\frac12$, it is quite easy to show that $\{T_t\}_{t>0}$ is a bounded semigroup on $L_p(\real_+)$ for $p_\a<p<p_\a'$. Let us outline the argument for the convenience of the reader.  By dilation invariance via the change of variables $u={x}/{\sqrt{\sinh t}}$ and $v={y}/{\sqrt{\sinh t}}$, the kernel $K_t$ is brought to
  $$\f(x, y)= \sqrt{xy}\,I_\a(xy)\exp\big(-\frac12(x^2+y^2)\cosh t\big).$$
 Let $\Phi$ be the associated integral operator:
 $$\Phi(f)=\int_0^\8\f(x, y)f(y)dy.$$
To estimate the norm of $\Phi$ in $B(L_p(\real_+; X))$ for $p_\a<p<p_\a'$, we appeal to  the following estimates of the Bessel function (cf. \cite[Chapter~5]{Leb})
 $$I_\a(z)\approx\frac{z^\a}{2^\a\Ga(\a+1)}\;\text{ as }\; z\to0\;\text{ and }\; I_\a(z)\approx\frac{e^z}{\sqrt{\pi z}}\;\text{ as }\; z\to\8.$$
Accordingly,  $I_\a$ is decomposed as
 $$I_\a(xy)=I_\a(xy)\un_{xy\le1} + I_\a(xy)\un_{xy>1}\les (xy)^\a \un_{xy\le1} + (xy)^{-1/2}e^{xy} \un_{xy>1}$$
with the relevant constant depending only on $\a$. Thus
 $$\f(x, y)\les\f_1(x, y)+\f_2(x, y),$$
 where
   \begin{align*}
   &\f_1(x, y)=(xy)^{\a+\frac12} \exp\big(-\frac12(x^2+y^2)\cosh t\big) \un_{xy\le1}\,, \\
   &\f_2(x, y)=e^{xy} \exp\big(-\frac12(x^2+y^2)\cosh t\big)\un_{xy>1}\,.
 \end{align*}
Let $\Phi_i$ be the integral operator corresponding to $\f_i$. Then by the H\"older inequality, we have
  \begin{align*}
  \big\|\Phi_1\big\|_{B(L_p(\real_+; X))}
  &\le \Big(\int_0^\8 x^{p(\a+\frac12)}e^{- p x^2\frac{\cosh t}2}dx\Big)^{\frac1p}
  \Big(\int_0^\8 y^{p'(\a+1/2)}e^{- {p'}y^2\frac{\cosh t}2}dy\Big)^{\frac1{p'}}\\
  &=C_{\a,p} (\cosh t)^{-(\a+1)}\le C_{\a,p}\,,
  \end{align*}
 where we have used the assumption that $p_\a<p<p_\a'$. Noting that $\f_2(x, y)\le e^{-(x-y)^2/2}$, we see that $ \big\|\Phi_2\big\|_{B(L_p(\real_+; X))}\le1$.

 In particular, $\{T_t\}_{t>0}$ is a symmetric semigroup of positive contractions on $L_2(\real_+)$, so analytic. Applying the previous sections to the associated subordinated Poisson semigroup $\{P_t\}_{t>0}$, we recover the result of \cite{betancor2} that $X$ is of Luzin cotype $q$ relative to $\{P_t\}_{t>0}$ on $L_2(\real; X)$ whenever $X$ is of martingale cotype $q$. Moreover, \cite{betancor2} shows that the converse is also true. Note that \cite{betancor2} also extends this result to all $p\in(p_\a,\, p_\a')$.

 We do not know, however,  to determine the analyticity of $\{T_t\}_{t>0}$ on $L_p(\real; X)$ for the range $-1<\a<-\frac12$.\end{ex}

\begin{rk} We would like to point out an interesting phenomenon revealed by this example. It is easy to get a semigroup of contractions on $L_2$ thanks to spectral theory. If the contractions are further positive (or regular), the results of the previous sections apply. In many concrete examples, one can then extrapolate $L_2$  to $L_p$ using tools from harmonic analysis. This is indeed the case for all previous examples.\end{rk}

\begin{ex}{\bf Ornstein-Uhlenbeck semigroup}. Now $\real^d$ is equipped with the canonical Gaussian measure $\g_d$. Let $\{T_t\}_{t>0}$ be the  Ornstein-Uhlenbeck semigroup on $\real^d$ whose negative generator is  given by $A=-\Delta +x\cdot\nabla$. This is again a symmetric diffusion semigroup.  By \cite{pis2}, $\{T_t\}_{t>0}$ is analytic on $L_p(\real^d, \g_d; X)$ iff $X$ is K-convex (a property weaker than the finite martingale cotype). On the other hand, by \cite{LP1}, $X$ is of Luzin cotype $q$ relative to the Poisson semigroup subordinated to $\{T_t\}_{t>0}$  iff $X$ is of martingale cotype $q$. We then deduce that the Luzin cotype $q$ of $X$ relative to $\{T_t\}_{t>0}$ itself  on $L_p(\real^d, \g_d; X)$ characterizes the martingale cotype $q$ of $X$. It is worth noting that in contrast to \cite{LP1}, all estimates obtained by the present method or by \cite{LP2} are dimension free.\end{ex}

\begin{ex}{\bf Walsh semigroup}.  Let $\O=\{-1, 1\}^\nat$ be the dyadic group equipped with normalized Haar measure. The coordinate functions $\{\e_n\}_{n\ge1}$  on $\O$ form an independent sequence of symmetric random variables (Rademacher functions). We introduce the Walsh system $(w_A)$:  for any finite subset
$A\subset \nat$ let
 $$w_A=\prod_{k\in A}\e_k\,.$$
If $A=\emptyset$, $w_\emptyset=1$. All such $w_A$'s form an orthonormal basis of $L_2(\O)$. Any $f\in L_2(\O)$ admits the following Fourier expansion:
 $$f=\sum_{A} \a_Aw_A.$$
Define
 $$T_t(f)=\sum_{A} e^{-t |A|}\a_Aw_A.$$
Then $\{T_t\}_{t>0}$ is a symmetric diffusion semigroup  on $\O$, it can be viewed as a baby model of the Ornstein-Uhlenbeck semigroup. Again, by \cite{pis2}, $\{T_t\}_{t>0}$ is analytic on $L_p(\O; X)$ iff $X$ is K-convex.

\begin{rk}  Let $\{T_t\}_{t>0}$ be the semigroup in the above example. Then the Luzin cotype relative to $\{T_t\}_{t>0}$ characterizes the martingale cotype.\end{rk}

Indeed, assume that  $X$ is of Luzin cotype $q$ relative to $\{T_t\}_{t>0}$. Then by an argument via the central limit theorem as in \cite{Bec}, we can show that  $X$ is of Luzin cotype $q$ relative to the Ornstein-Uhlenbeck semigroup too, so by the previous example, $X$ is of martingale cotype $q$.\end{ex}

It would be interesting to show the analogue of Corollary~\ref{NY} for Ornstein-Uhlenbeck or Walsh semigroup.

\begin{problem}
 Let $\{T_t\}_{t>0}$ be the Ornstein-Uhlenbeck or Walsh semigroup as above and $X$ be of martingale cotype $q$. Does one have
 $$
 \mathsf{L}^{T}_{\cc,q, p}(X)
 \les\max\big(p^{\frac1q},\, p'\big) \mathsf{M}_{\cc,q}(X)?$$
 \end{problem}

It would, of course, suffice to consider the Walsh case. On the other hand, it is likely that in the Ornstein-Uhlenbeck setting one could get a dimension dependent estimate $
 \mathsf{L}^{T}_{\cc,q, p}(X)\les_d\max\big(p^{\frac1q},\, p'\big) \mathsf{M}_{\cc,q}(X)$ by standard techniques on the Ornstein-Uhlenbeck semigroup. However, here a dimension free estimate is more important than the corresponding one for the heat semigroup in $\real^d$ in view of analysis in Wiener space.

\begin{ex}{\bf Translation semigroup}.  We have already used the translation semigroup $\{\tau_t\}_{t>0}$ in in the proof of Theorem~\ref{Poisson ML}. It is not analytic on $L_p(\real)$ for any $p$. By \eqref{translation to Poisson}, if $X$ is of Luzin cotype $q$ relative to  the  subordinated Poisson semigroup $\{P^\tau_t\}_{t>0}$ on $L_p(\real;X)$, so is $X$ relative to the Poisson semigroup subordinated to any semigroup of regular contractions  $\{T_t\}_{t>0}$ on $L_p(\O; X)$. Consequently, the Luzin cotype relative to $\{P^\tau_t\}_{t>0}$ is equivalent to the martingale cotype.\end{ex}

\begin{ex}{\bf $L_2$-theory}. Let $A$ be a positive densely defined operator on $L_2(\O)$. Then $-A$ generates an analytic semigroup $\{T_t\}_{t>0}$ of contractions on $L_2(\O)$. Being positivity preserving can be characterized by means of the Dirichlet form associated to $A$ (cf. \cite[Theorem~1.3.2]{Davies}). Many classical examples are built in this way. Note, however, that it can happen that $\{T_t\}_{t>0}$ extends to a semigroup of bounded operators on $L_p(\O)$ for $p$ only in a  small symmetric interval around $2$ as shown by the Laguerre semigroup. Even worse, it can happen that $\{T_t\}_{t>0}$ does not extrapolate to $L_p(\O)$ for any $p\neq2$.\end{ex}


\centerline{\bf Added in proof}

After the submission of this article for publication, a few related works have appeared. For instance, A.K.~Lerner, E.~Lorist and S.~Ombrosi \cite{Lerner} found a new proof of Theorem~\ref{fML} (i) by the sparse domination principle,  T.P.~Hyt\"onen and S.~Lappas \cite{HL} obtained an estimate close to that appearing in Theorem~\ref{Heat ML} (ii), and G. Hong, Z. Xu and H. Zhang  \cite{HXZ} partially resolved Problem 1.8,  Problem A.1 and Conjecture A.4.

\bigskip \n{\bf Acknowledgements.}  I am extremely grateful to Assaf Naor for many inspiring communications as well as for pointing the problems in his paper \cite{NaYo1} joint with Robert Young; the resolution of their problems has forced me to invent new techniques, thereby has lead to Theorem~\ref{fML}, Corollary~\ref{NY} and a significant improvement on the orders of the constants in a preliminary version of Theorem~\ref{Poisson ML} and Theorem~\ref{Heat ML}; our communications have also  been a special impulse to the work  \cite{LP-Optimality} on the optimal orders of the best constants in the classical Littlewood-Paley inequalities (Assaf also asked himself these questions on the classical case in his own research). I also wish to thank Alexandros Eskenazis, Zhendong Xu and Hao Zhang for useful discussions and careful readings of various versions of the article. I learnt the existence of \cite{Weis01, KU} after the submission of this article for publication, and I thank Emiel Lorist for pointing out these references to me. Finally, I want to thank the three anonymous referees for their numerous helpful comments and suggestions; one of them has pointed out to me the references \cite{Lis, Kriegler} that have lead to an improvement on the estimate of the constant $\mathsf{L}^{T}_{\cc,p}$ in Corollary~\ref{scalar heat LPS}.

This work is partially supported by the French ANR project (No. ANR-19-CE40-0002) and the Natural Science Foundation of China (No.12031004).

\bigskip


\end{document}